\documentclass{sinst}

\setcounter{page}{1}

\usepackage{amsbsy,amsfonts,amsmath,amssymb,amsthm}
\usepackage{array}
\usepackage{bm}
\usepackage{color}
\usepackage{enumerate}
\usepackage{mathrsfs}
\usepackage{latexsym}
\usepackage[colorlinks=true]{hyperref}
\usepackage{mathtools}
\mathtoolsset{showonlyrefs, showmanualtags}
\hypersetup{urlcolor=blue, citecolor=blue, linkcolor=blue}

\definecolor{wineRed}{rgb}{0.7,0,0.3}
\definecolor{grandBleu}{rgb}{0,0,0.8}
\definecolor{darkGreen}{rgb}{0,0.4,0}
\definecolor{blueViolet}{rgb}{0.4,0,1.0}
\definecolor{bloodOrange}{rgb}{0.85,0.05,0}
\definecolor{mycolor}{rgb}{0.8,0,0.2}

\usepackage[textsize=small]{todonotes}
\setlength{\marginparwidth}{2.5cm}
\usepackage{cite}

\bibliographystyle{aims}
\usepackage{comment}

\DeclareMathAlphabet{\mathpzc}{OT1}{pzc}{m}{it}

\usepackage[textsize=small]{todonotes}
\setlength{\marginparwidth}{2cm}

\setcounter{secnumdepth}{2} \setcounter{section}{0}

\numberwithin{equation}{section}

\theoremstyle{plain}
\newtheorem{mainThm}{Main Theorem} 
\newtheorem{lemma}{Lemma}[section]

\newtheorem{theorem}{Theorem}

\theoremstyle{definition}
\newtheorem{definition}{Definition}
\newtheorem{rem}{Remark}
\newtheorem{notn}{Notation}
\newtheorem{ex}{Example}


\def\N{\mathbb{N}}
\def\R{\mathbb{R}}

\def\ds{\displaystyle}
\def\ts{\textstyle}

\DeclareMathOperator{\argmin}{arg min}
\DeclareMathOperator{\diver}{div}

\begin{document}
\label{page:t}
\thispagestyle{plain}

\vspace*{-4ex}
\title{
        Well-Posedness of Pseudo-Parabolic Gradient Systems \\[-1ex] with State-Dependent Dynamics
\vspace{-2ex}
}
\author{Harbir Antil
}
\affiliation{~~\\[-4ex] Department of Mathematical Sciences and \\[-0.5ex] the Center for Mathematics and Artificial Intelligence (CMAI), 
\\
George Mason University, Fairfax, VA 22030, USA\\[-0.5ex]}
\email{hantil@gmu.edu}

\sauthor{~~\\[-4ex]{\sc Daiki Mizuno}
}
\saffiliation{~~\\[-5ex] Division of Mathematics and Informatics, \\[-0.5ex] Graduate School of Science and Engineering, Chiba University, \\[-0.5ex] 1-33, Yayoi-cho, Inage-ku, 263-8522, Chiba, Japan\\[-0.5ex]}
\semail{d-mizuno@chiba-u.jp}
\tauthor{~~\\[-4ex]{\sc Ken Shirakawa}
}
\taffiliation{~~\\[-5ex] Department of Mathematics, Faculty of Education, Chiba University \\[-0.5ex] 1--33 Yayoi-cho, Inage-ku, 263--8522, Chiba, Japan\\[-0.5ex]}
\temail{sirakawa@faculty.chiba-u.jp}

\fauthor{~~\\[-3.5ex]{\sc Naotaka Ukai}
}
\faffiliation{~~\\[-5ex] Division of Mathematics and Informatics, \\[-0.5ex] Graduate School of Science and Engineering, Chiba University, \\[-0.5ex] 1-33, Yayoi-cho, Inage-ku, 263-8522, Chiba, Japan\\[-0.5ex]}
\femail{24wd0101@student.gs.chiba-u.jp}
\vspace{-1.75ex}

\footcomment{{
$^*$\,This work is supported by JST SPRING Grant Number JPMJSP2109. This work is partially supported by the Office of Naval Research (ONR) under Award NO: N00014-24-1-2147, NSF grant DMS-2408877, the Air Force Office of Scientific Research (AFOSR) under Award NO: FA9550-25-1-0231.
\\
AMS Subject Classification: 
35K52 
35K70, 
37L05 
\\
Keywords: pseudo-parabolic gradient system, state-dependent dynamics, general framework in terms of well-posedness, applications to scientifice and technological models
}
}
\maketitle
\vspace{-0ex}

\noindent
{\bf Abstract. } 
This paper develops a general mathematical framework for pseudo-parabolic gradient systems with state-dependent dynamics. The state dependence is induced by variable coefficient fields in the governing energy functional. Such coefficients arise naturally in scientific and technological models, including state-dependent mobilities in KWC-type grain boundary motion and variable orientation-adaptation operators in anisotropic image denoising. 
We establish two main results: the existence of energy-dissipating solutions, and the uniqueness and continuous dependence on initial data. The proposed framework yields a general well-posedness theory for a broad class of nonlinear evolutionary systems driven by state-dependent operators. 
As illustrative applications, we present an anisotropic image-denoising model and a new pseudo-parabolic KWC-type model for anisotropic grain boundary motion, and prove that both fit naturally within the abstract structure of (S)$_\nu$.

\pagebreak 

\section*{Introduction}
Let $T>0$ be a fixed final time and let $M,N\in\mathbb{N}$. Let $\Omega\subset\mathbb{R}^N$ be a bounded domain with boundary $\Gamma:=\partial\Omega$ and unit outer normal $\bm n_\Gamma$, which is assumed to be Lipschitz when $N>1$. We set $Q:=(0,T)\times\Omega$ and $\Sigma:=(0,T)\times\Gamma$. 
\bigskip

In this paper, for a fixed parameter $\nu\ge 0$, we consider the following system of pseudo-parabolic PDEs, denoted by $(\mathrm{S})_\nu$:
\begin{align}
    \mbox{(S)$_\nu$:} ~&
    \nonumber
    \\
    & 
    \begin{cases}
        A(\bm{u})\partial_t\bm{u}-\mathrm{div}\bigl(\alpha(\bm{u}) B^*(\bm{u}) \partial \gamma({B}(\bm{u})\nabla\bm{u})+\nu \nabla\Upsilon_p(\nabla \bm{u}) +\mu \nabla \partial_t \bm{u}\bigr)
        \\
        \quad +[\nabla\alpha](\bm{u})\gamma({B}(\bm{u})\nabla\bm{u})+\alpha(\bm{u})\partial \gamma({B}(\bm{u})\nabla\bm{u}):[\nabla{B}](\bm{u})\nabla\bm{u}
        \\
        \quad +\nabla_{\bm{u}} G(x, \bm{u}) \ni \bm{0} \mbox{ in $ Q $,}
        \\[1ex]
        \bigl(\alpha(\bm{u})B^*(\bm{u}) \partial \gamma({B}(\bm{u})\nabla\bm{u})+\nu \nabla\Upsilon_p(\nabla \bm{u}) +\mu \nabla \partial_t \bm{u}\bigr) \bm{n}_\Gamma\ni \bm{0} \mbox{ on } \Sigma,
        \\[1ex]
        \bm{u}(0,x)=\bm{u}_0(x), ~x\in\Omega.
    \end{cases}
\end{align}
This system is formally derived as the gradient flow associated with the following energy functional:
\begin{gather}
    E_\nu: \bm{u} = {}^\top \bigl[ u_1, \dots, u_M \bigr] \in [W^{1, p}(\Omega)]^M \mapsto E_\nu(\bm{u}) 
  \nonumber
\\
  := \int_\Omega \alpha(\bm{u})\gamma(B(\bm{u})\nabla \bm{u}) \, dx+{\nu} \int_\Omega \Upsilon_p(\nabla \bm{u}) +\int_\Omega G(x,\bm{u}) \, dx \in [0, \infty).
  \label{energy01}
\end{gather}
In this context, the coefficient $ \mu > 0 $ is a fixed constant specifying the \emph{pseudo-parabolicity} of the gradient system. The exponent $ p \in (2, \infty) \cap [N, \infty) $ is a fixed constant, and the potential $ 0 \leq \Upsilon_p \in C^1(\R^M) $ is a fixed function having $p$-growth order. The coefficient fields $ 0 < \alpha \in W^{1, \infty}(\R^M) $, $ 0 < A \in [W^{1, \infty}(\R^M)]^{M \times M} $, and nonlinear operator $ B $ are fixed functions of variable $ \bm{u} $ ($ \in \R^M $). $ B^*$ is a conjugate operator of $B$. $ 0 \leq \gamma \in C^{0, 1}(\R^M)$ is a fixed Lipschitz function. $ G(x, \cdot) \in C^{1, 1}(\R^M) $, for $x \in \Omega$, is an $x$-dependent potential function, and $ \nabla_{\bm{u}} G = \nabla_{\bm{u}} G(x, \cdot) $ denotes its gradient. $ \bm{u}_0 = {}^\top\bigl[ u_{0, 1}, \dots, u_{0, M} \bigr] \in [H^1(\Omega)]^M $ is the fixed initial datum. 
\bigskip

The objective of this study is to develop a mathematical framework that accommodates complex systems with \emph{state-dependent dynamics} evolving in time. In the system (S)$_\nu$, this state-dependence is determined by the coefficient fields $ \alpha(\bm{u}) $, $ A(\bm{u}) $, and $ B(\bm{u}) $, which are motivated by scientific and technological applications.
\bigskip

For instance, the coefficient fields $\alpha(\bm{u})$ and $A(\bm{u})$ are motivated by the \emph{variable-dependent mobility} appearing in KWC-type models of grain boundary motion proposed by Kobayashi et al.\ \cite{MR1752970,MR1794359}, together with subsequent developments \cite{MR2469586,MR2548486,MR3038131,MR3082861,MR3203495,MR2836555,MR2668289,MR3888633,MR3155454,MR4395725}. Meanwhile, the operator $B(\bm{u})$ models a variable-dependent operator of orientation adaptation used in anisotropic image denoising \cite{berkels2006cartoon,AMSU202411}, with anisotropic metric $\gamma$.
\bigskip

In view of these backgrounds, the analysis of system $(\mathrm{S})_\nu$ provides a general mathematical framework with a wide range of applications to nonlinear phenomena, including material science and image engineering.
Owing to the presence of state-dependent operators, the proposed framework acquires a high degree of dynamical flexibility. This enables us to treat systems with markedly adaptive, heterogeneous, or otherwise highly variable behavior that lies beyond the scope of classical pseudo-parabolic or gradient-flow formulations.
\bigskip

In addition, this framework yields a new anisotropic model of grain boundary motion with guaranteed well-posedness. Moreover, the vectorial structure of the variable $\bm{u}$ offers the potential for further applications to other nonlinear systems, such as the Fr\'{e}mond model of shape memory alloys \cite{MR1040232,MR1885252}.
\bigskip

The mathematical framework is formulated in terms of the well-posedness results for (S)$_\nu$, which are stated in the following Main Theorems.
\begin{description}
    \item[Main Theorem \ref{mainThm1}:] Existence of solutions with energy dissipation
    \item[Main Theorem \ref{mainThm2}:] Uniqueness and continuous dependence of solutions
\end{description}
\bigskip

The Main Theorems are stated in Section \ref{secmain}, after the preliminaries in Section \ref{secpre}. The proofs of Main Theorems \ref{mainThm1} and \ref{mainThm2} are given in Sections \ref{sec:proof1} and \ref{sec:proof2}, respectively, and are based on the time-discretization method developed in Section \ref{timedis003}. Finally, Section 6 is devoted to demonstrate two applications of Main Theorems, concerned with:
\begin{description}
    \item[\S\,6.1:]The anisotropic image denoising studied in \cite{AMSU202411}.
    \item[\S\,6.2:]A new pseudo-parabolic KWC-type model of grain boundary motion with anisotropy.
\end{description}

\section{Preliminaries}\label{secpre}
We begin by specifying the notation adopted throughout the paper.
\begin{notn}[{Real analysis}]\label{notn1}We define:
  \[a \lor b : = \max\{ a , b \} \mbox{ and } a \land b : = \min\{ a , b \} , \mbox{ for all } a , b \in [-\infty, \infty],\]
  and especially, we note:
  \[[a]^+:= a \lor 0 \mbox{ and } [a]^-:= -(a \land 0), \mbox{ for all } a \in [-\infty,\infty].\]

Let $d \in \N$ be fixed dimension. We denote by $ | x | $ and $ x \cdot y $ the Euclidean norm of $ x \in \R^d$ and the standard scalar product of $ x , y \in \R^d$, respectively, i.e.,
\begin{gather*}
  | x | : = \sqrt{x_1^2 + \cdots + x_d^2} \quad \mbox{and} \quad x \cdot y := x_1 y_1 + \cdots + x_d y_d,
  \\
  \mbox{ for all } x = [x_1 , \dots , x_d], \, y = [y_1 , \dots , y_d] \in \R^d.
\end{gather*}
\end{notn}
Additionally, we note the following elementary fact:
\begin{description}
  \item[\textbf{(Fact 1)}](cf.\cite[Proposition 1.80]{MR1857292}) Let $ m \in \N $ be fixed. If $ \{A_k\}_{k=1}^m \subset \R $ and $ \{ a_n^k \}_{n\in\N}\subset\R $, 
  $ k = 1, \dots , m $ satisfies that:
  \[ \liminf_{ n \rightarrow \infty } a^k_n \geq A_k, \mbox{ for } k = 1, \dots , m, \mbox{ and }\limsup_{ n \rightarrow \infty} \sum_{k=1}^{m} a_n^k \leq \sum_{k=1}^{m} A_k. \]
  Then, $ \lim_{n \rightarrow \infty } a_n^k = A_k $, for $ k = 1 , \dots , m $. 
\end{description}
\begin{notn}[{Abstract functional analysis}]\label{notn2}
For an abstract Banach space $ X $, we denote by $| \cdot |_X$ the norm of $ X $, and denote by $ \langle \cdot , \cdot \rangle_X $ the duality pairing between $ X $ and its dual $ X^* $. In particular, when $ X $ is a Hilbert space, we denote by $( \cdot , \cdot )_X$ its inner product. 

For Banach spaces $ X_1 , \dots ,X_d $ with $ 1 < d \in \N$, let $ X_1 \times \dots \times X_d $ be the product Banach space which has the norm 
\[ | \cdot |_{ X_1\times \dots \times X_d } : = \left(| \cdot |_{X_1}^2 + \dots + | \cdot |_{X_d}^2\right)^\frac{1}{2} .\] 
In addition, in the case where the domain is the whole space we shall denote the norm in the following:
\begin{align*}
  \|\cdot\|_{W^{p.q}}:=|\cdot|_{W^{p.q}(\R^d;\R^k)}, \mbox{ for all $ d,k\in\N$, $ p \in \N\cup \{0\}, $ and $q \in[1,\infty]$.}
\end{align*}
Furthermore, for normed linear spaces $E$ and $F$, we denote by
\begin{align*}
  \mathcal{L}(E,F)
\end{align*}
the space of all continuous linear maps from $E$ into $F$. 
\end{notn}

\begin{notn}[{Convex analysis}]\label{notn3}
For any proper lower semi-continuous (l.s.c.) and convex function $ \Psi : X \rightarrow (-\infty,\infty]$ on a Hilbert space $ X $, we denote by $ D( \Psi ) $ the effective domain of $ \Psi $, and denote by $ \partial \Psi $ the subdifferential of $ \Psi $. The subdifferential $ \partial \Psi $ is a set-valued map corresponding to a weak differential of $ \Psi $, and it is known as a maximal monotone graph in the product space $ X \times X $. More precisely, for each $ w \in X $, the value $ \partial \Psi( w ) $ is defined as the set of all elements $ w^* \in X $ that satisfy the variational inequality
\[( w^*, x - w )_X \leq \Psi (x) - \Psi (w), \mbox{ for any } x \in D ( \Psi ), \]
and the set $ D ( \partial \Psi ) := \{ x \in X \,|\, \partial \Psi (x) \neq \emptyset \}$ is called the domain of $ \partial \Psi $. We often use the notation $`` [w, w^*] \in \partial \Psi $ in $ X \times X "$ to mean that $`` w^* \in \partial \Psi ( w ) $ in $ X $ for $ w \in D ( \partial \Psi ) "$, by identifying the operator $ \partial \Psi $ with its graph in $ X \times X $.
\end{notn}
\begin{ex}\label{ex1}
  Let $X$ be a Hilbert space. Let $\gamma: \R^{M\times N} \longrightarrow [0,+\infty)$ be a convex function such that it belongs to $C^{0,1}(\R^{M\times N})$. Also, the function $\{\gamma_\varepsilon\}_{\varepsilon\geq0}$ is defined as follows: 
  \begin{equation*}
    \gamma_\varepsilon :=\left\{
      \begin{aligned}
        &\gamma, &&\mbox{ if }\varepsilon=0,
        \\
        &\rho_\varepsilon * \gamma, &&\mbox{ otherwise},
      \end{aligned}\right. \mbox{ on }\R^{M\times N},
  \end{equation*}
  where $\rho_\varepsilon$ is the standard mollifier. Then, the following two items hold.
  \begin{description}
    \item[(I)] Let $\{\Phi_\varepsilon\}_{\varepsilon\geq0}$ be a sequence of functionals on $[X]^{M\times N}$, defined as:
  \begin{align*}
    &\Phi_\varepsilon:W\in[X]^{M\times N}\mapsto\Phi_\varepsilon(W):=\int_\Omega\gamma_\varepsilon(W)\,dx\in[0,\infty).
  \end{align*}
  Then, for every $\varepsilon\in[0,\infty)$, $\Phi_\varepsilon$ is the proper, l.s.c., and convex function, such that 
  \[D(\Phi_\varepsilon)=D(\partial\Phi_\varepsilon)=[X]^{M\times N},\]
  and 
  \begin{equation*}
    \partial\Phi_\varepsilon(W):=
    \left\{
      \begin{aligned}
        &\{\nabla\gamma_\varepsilon(W)\}, \mbox{ if }\varepsilon>0,
        \\
        &\{W^*\in[X]^{M\times N}~|~W^*\in\partial\gamma(W)\mbox{ a.e. in } \Omega,\}, \mbox{ if }\varepsilon=0,
      \end{aligned}
    \right.
  \end{equation*}
  \[\mbox{ in }[X]^{M\times N}, \mbox{ for any }W\in[X]^{M\times N}.\]
    \item[(II)] Let any open interval $I\subset(0,T)$, and let $\{\widehat{\Phi}_\varepsilon^I\}_{\varepsilon\geq0}$ be a sequence of functionals on $L^2(I;[X]^{M\times N})\,(=[L^2(I;H)]^{M\times N})$, defined as:
  \begin{align*}
    &\widehat{\Phi}_\varepsilon^I:W\in L^2(I;[X]^{M\times N})\mapsto\widehat{\Phi}_\varepsilon^I(W):=\int_I\Phi_\varepsilon(W(t))\,dt\in[0,\infty).
  \end{align*}
  Then, for every $\varepsilon\in[0,\infty)$, $\widehat{\Phi}^I_\varepsilon$ is the proper, l.s.c., and convex function, such that 
  \[D(\widehat{\Phi}_\varepsilon^I)=D(\partial\widehat{\Phi}_\varepsilon^I)=L^2(I;[X]^{M\times N}),\]
  and 
  \begin{align*}
    \partial\widehat{\Phi}_\varepsilon^I(W)
    &=\{\tilde{W}^*\in L^2(I;[X]^{M\times N})~|~\tilde{W}^*(t)\in \partial \Phi_\varepsilon(W(t))\mbox{ in }[X]^{M\times N}, \mbox{ a.e. }t\in I\}
    \\
    &=
    \left\{
      \begin{aligned}
        &\{\nabla\gamma_\varepsilon(W)\}, \mbox{ if }\varepsilon>0,
        \\
        &\{W^*\in L^2(I;[X]^{M\times N})~|~W^*\in\partial\gamma(W)\mbox{ a.e. in } I\times \Omega,\}, \mbox{ if }\varepsilon=0,
      \end{aligned}
    \right.
  \end{align*}
  \[\mbox{ in }L^2(I;[X]^{M\times N}), \mbox{ for any }W\in L^2(I;[X]^{M\times N}).\]
  \end{description}
\end{ex}

\begin{definition}[{Mosco-convergence}: cf.\cite{MR0298508}]\label{dfnmosco}
    Let $ X $ be a Hilbert space. Let $ \Psi : X \rightarrow ( -\infty , \infty ] $ be a proper, l.s.c., and convex function, and let $ \{ \Psi_n \}_{ n \in \N } $ be a sequence of proper, l.s.c., and convex functions $ \Psi_n : X \rightarrow ( -\infty , \infty ] $, $ n \in \N $. Then, we say that $ \Psi_n \to  \Psi $ on $ X $ in the sense of Mosco, iff. the following two conditions are fulfilled:
  \begin{description}
    \item[(M1) (Optimality)] For any $w_0 \in D ( \Psi )$, there exists a sequence $ \{w_n\}_{ n \in \N } \subset X $ such that $ w_n \rightarrow w_0 $ in $ X $ and $ \Psi_n ( w_n ) \rightarrow \Psi ( w_0 ) $ as $ n \rightarrow \infty $, 
    \item[(M2) (Lower-bound)] $\liminf_{ n \rightarrow \infty } \Psi_n ( w_n ) \geq \Psi ( w_0 )$ if $w_0 \in X, \{ w_n \}_{ n \in \N } \subset X $, and $w_n \rightarrow w_0 $ weakly in $ X $ as $ n \rightarrow \infty $.
  \end{description}
\end{definition}
\begin{definition}[{$ \Gamma $-convergence}; cf.\cite{MR1201152}]\label{dfngamma}
  Let $ X $ be a Hilbert space. Let $ \Psi : X \rightarrow ( -\infty , \infty ] $ be a proper and l.s.c. function, and let $ \{ \Psi_n \}_{ n \in \N } $ be a sequence of proper and l.s.c. functions $ \Psi_n : X \rightarrow ( -\infty , \infty ] $, $ n \in \N $. Then, we say that {$ \Psi_n \to \Psi $} on $ X $ in the sense of $ \Gamma $-convergence, iff. the following two conditions are fulfilled:
  \begin{description}
      \item[{($\mathbf{\Gamma}$1) (Optimality)}] For any $w_0 \in D ( \Psi )$, there exists a sequence $ \{w_n\}_{ n \in \N } \subset X $ such that $ w_n \rightarrow w_0 $ in $ X $ and $ \Psi_n ( w_n ) \rightarrow \Psi ( w_0 ) $ as $ n \rightarrow \infty $, 
      \item[{($\mathbf{\Gamma}$2) (Lower-bound)}] $\liminf_{ n \rightarrow \infty } \Psi_n ( w_n ) \geq \Psi ( w_0 )$ if $w_0 \in X, \{ w_n \}_{ n \in \N } \subset X $, and $w_n \rightarrow w_0 $ in $ X $ as $ n \rightarrow \infty $.
  \end{description}
\end{definition}

\begin{rem}\label{rem2}
  We note that under the condition of convexity of functionals, Mosco convergence implies $ \Gamma $-convergence, i.e., the  $ \Gamma $-convergence of convex functions can be regarded as a weak version of Mosco convergence. Furthermore,  in the $ \Gamma $-convergence of convex functions, we can see the following:
\begin{description}
  \item[(Fact 2)] (cf.\cite[Theorem 3.66]{MR0773850} and \cite[Chapter 2]{Kenmochi81}) Let $ X $ be a Hilbert space. Let $ \Psi : X \rightarrow ( -\infty , \infty ] $ and $ \Psi_n : X \rightarrow ( -\infty , \infty ] $, $ n \in \N $, be proper, l.s.c., and convex functions on a Hilbert space $ X $ such that $ \Psi_n \rightarrow \Psi $ on $ X $, in the sense of $ \Gamma $-convergence, as $ n \rightarrow \infty $. Let us assume that
  \begin{equation*}
    \left\{
    \begin{aligned}
      &[z, z^*] \in X \times X ,~ [z_n, z_n^*] \in \partial \Psi_n \mbox{ in } X \times X,~n \in \N,
      \\
      &z_n \rightarrow z^* \mbox{ in } X \mbox{ and } z_n^* \rightarrow z^* \mbox{ weakly in } X \mbox{ as } n \rightarrow \infty.  
    \end{aligned}
    \right.
  \end{equation*}
  Then, it holds that:
  \[ [ z , z^* ] \in \partial \Psi \mbox{ in } X \times X , \mbox{ and } \Psi_n ( z_n ) \rightarrow \Psi ( z ) \mbox{ as } n \rightarrow \infty. \]
  \item[(Fact 3)](cf.\cite[Lemma 4.1]{MR3661429} and \cite[Appendix]{MR2096945}) Let $ X $ be a Hilbert space, $ d \in \N $ be dimension constant, and $ A \subset \R^d $ be a bounded open set. Let $ \Psi : X \rightarrow ( -\infty , \infty ] $ and $ \Psi_n : X \rightarrow ( -\infty , \infty ] $, $ n \in \N $, be proper, l.s.c., and convex functions on $ X $ such that $ \Psi_n \rightarrow \Psi $ on $ X $, in the sense of $ \Gamma $-convergence, as $ n \rightarrow \infty $. Then, a sequence $ \{ \widehat{ \Psi }_n^A \}_{ n \in \N } $ of proper, l.s.c., and convex functions on $ L^2 ( A ; X ) $, defined as:
  \begin{equation*}
    z \in L^2 ( A ; X ) \mapsto \widehat{ \Psi }^A_n ( z ) : = \left\{
      \begin{aligned}
        & \int_A \Psi _n ( z ( t ) ) \,dt, \mbox{ if } \Psi_n ( z ) \in L^1 ( A ), 
        \\
        & \infty, \mbox{ otherwise}, 
      \end{aligned}
    \right.\mbox{ for }n \in \N;
  \end{equation*}
  converges to a proper, l.s.c., and convex function $ \widehat{ \Psi }^A $ on $ L^2 ( A ; X ) $, defined as:
  \begin{equation*}
    z \in L^2 ( A ; X ) \mapsto \widehat{ \Psi }^A ( z ) : = \left\{
      \begin{aligned}
        & \int_A \Psi ( z ( t ) ) \,dt, \mbox{ if } \Psi ( z ) \in L^1 ( A ), 
        \\
        & \infty, \mbox{ otherwise}, 
      \end{aligned}
    \right.
  \end{equation*}
  on $ L^2 ( A ; X ) $, in the sense of $ \Gamma $-convergence, as $ n \rightarrow \infty $.
\end{description}
\end{rem}
\begin{ex}[Examples of Mosco-convergence]\label{ex2}
  Let $X$ be a Hilbert space. Let $\varepsilon_0\geq0$ be arbitrary fixed constant, and let $\gamma$ and $\{\gamma_\varepsilon\}_{\varepsilon\geq0}$ be as in Example \ref{ex1}, respectively. Then, the following three items hold.
  \begin{description}
    \item[(I)] $\gamma_\varepsilon\rightarrow\gamma_{\varepsilon_0}$ on $\R^{M\times N}$, in the sense of Mosco, as $\varepsilon\rightarrow\varepsilon_0$.
    \item[(II)] Let $\{\Phi_\varepsilon\}_{\varepsilon\geq0}$ be the sequence of proper, l.s.c., and convex functions on $[X]^{M\times N}$, as in Example \ref{ex1}(I). Then,
    \begin{align}
    \Phi_\varepsilon\rightarrow\Phi_{\varepsilon_0} \mbox{ on }[X]^{M\times N} \mbox{ in the sense of Mosco, as }\varepsilon\rightarrow\varepsilon_0.
    \end{align} 
    \item[(III)] Let $I\subset(0,T)$ be an open interval, and let $\{\widehat{\Phi}_{\varepsilon}^I\}_{\varepsilon\geq0}$ be the sequence of proper, l.s.c., and convex functions on $L^2(I;[X]^{M\times N})$, as in Example \ref{ex1}(II). Then, 
    \begin{align}
    \widehat{\Phi}^I_\varepsilon\rightarrow\widehat{\Phi}^I_{\varepsilon_0} \mbox{ on }L^2(I;[X]^{M\times N}), \mbox{ in the sense of Mosco, as }\varepsilon\rightarrow\varepsilon_0.
  \end{align}
  \end{description}
\end{ex}
\begin{notn}\label{deftseq}
  Let $ \tau >0 $ be a constant of time-step size, and $\{t_i\}_{i=0}^\infty$ be a time-sequence defined as 
  \[t_i := i\tau, \mbox{ for any } i = 0, 1, 2, \dots . \]
  Let $ X $ be a Banach space. For any sequence $ \{ [t_i , u_i] \}_{i=0}^\infty \subset [0,\infty) \times X $, we define three types of interpolations: $ [ \overline{u} ]_\tau \in L^\infty_{\mathrm{loc}}([0,\infty);X) $, $ [ \underline{u} ]_\tau \in L^\infty_{\mathrm{loc}}([0,\infty);X) $, and $ [ u ]_\tau \in W^{1,2}_{\mathrm{loc}}([0,\infty);X) $, as follows:
\begin{equation*}
  \left\{
  \begin{aligned}
    &[\overline{u}]_\tau(t):=\chi_{(-\infty,0]}u_{0}+\sum_{i=1}^{\infty}\chi_{(t_{i-1},t_i]}(t)u_{i},
    \\
    &[\underline{u}]_\tau(t):=\sum_{i=0}^{\infty}\chi_{(t_i,t_{i+1}]}(t)u_{i},\hspace*{25ex}\mbox{in }X,\mbox{ for any } t\geq0,
    \\
    &[u]_\tau(t):=\sum_{i=1}^{\infty}\chi_{(t_{i-1},t_i]}(t)\biggl(\frac{t-t_{i-1}}{\tau}u_{i}+
    \frac{t_i-t}{\tau}u_{i-1}\biggr),
  \end{aligned}
  \right.
\end{equation*}
where $ \chi_E : \R \rightarrow \{0,1\} $ represents the characteristic function of the set $ E \subset \R $.

In the meantime, for any $q\in[1,\infty)$ and any $w \in L^q_{\mathrm{loc}}([0,\infty);X)$, we denoted by $\{w_i\}_{i=0}^\infty\subset X$ the sequence of time-discretization data of $w$, defined as 
\begin{align*}
  w_0=0 \mbox{ in $X$ and } w_i :=\frac{1}{\tau}\int_{t_{i-1}}^{t_i}w(\sigma)\,d\sigma \mbox{ in $X$ for }i=1,2,3,\dots.
\end{align*}
As is easily checked, the time-interpolations $[\overline{w}]_\tau$, $[\underline{w}]_\tau\in L^q_{\mathrm{loc}}([0,\infty);X)$, for the above $\{w_i\}_{i=0}^\infty$ fulfill 
\begin{align*}
  [\overline{w}]_\tau \rightarrow w \mbox{ and }[\underline{w}]_\tau\rightarrow w\mbox{ in }L^q_{\mathrm{loc}}([0,\infty);X) \mbox{ as }\tau \downarrow 0.
\end{align*}
\end{notn}

\begin{notn}\label{tensordef}
  Let $A = [a_{ij}]$ and $B = [b_{ij}]$ be arbitrary matrices in $\mathbb{R}^{M \times N}$, 
with entries $a_{ij}, b_{ij} \in \mathbb{R}$ for $i = 1,\ldots,M$ and $j = 1,\ldots,N$. 
We define the Frobenius inner product and norm by
\[
A : B = \sum_{i=1}^M \sum_{j=1}^N a_{ij} b_{ij}, 
\qquad 
\|A\| = \sqrt{A : A}, \mbox{ respectively.}
\]

For $M > 1$, and a vector-valued function $\bm{z} = [z_i] \in [L^1_{\mathrm{loc}}(\Omega)]^M$. 
Its distributional gradient is given by
\[
\nabla \bm{z} = {}^\top(\nabla z_1, \ldots, \nabla z_M) 
= \begin{bmatrix}
\partial_1 z_1 & \cdots & \partial_N z_1 \\
\vdots & \ddots & \vdots \\
\partial_1 z_m & \cdots & \partial_N z_M
\end{bmatrix}
\in \mathcal{D}'(\Omega)^{M \times N}.
\]
We also denoted by $\Delta \bm{z} := [\Delta z_i] \in [\mathscr{D}'(\Omega)]^M$ as the usual Laplace operator, in the distributional sense. 

\noindent
In particular, let $\Delta_N$ denote the Laplace operator with homogeneous Neumann boundary condition, defined as follows:
\[
D(\Delta_N) := 
\left\{ \tilde{\bm{z}} \in [H^2(\Omega)]^M \;\middle|\; 
\nabla \tilde{\bm{z}}|_\Gamma \cdot n_\Gamma = 0 
\text{ in } [H^{1/2}(\Gamma)]^m \right\},
\]
and for $\bm{z} \in D(\Delta_N)$ we set
\[
\Delta_N \bm{z} := [\Delta_N z_i] = \Delta \bm{z} \in [H]^M.
\]

The operator $-\Delta_N$ is identified as a linear isomorphism via the Green-type formula (cf. \cite[Propositon 5.6.2]{MR2192832}):
\[
- \int_\Omega \Delta_N \bm{z} \cdot \bm{w} \, dx 
= \int_\Omega \nabla \bm{z} : \nabla \bm{w} \, dx, 
\qquad \forall (\bm{z},\bm{w}) \in D(\Delta_N) \times [H]^M.
\]

Finally, for a matrix-valued function $\bm{Z} = [z_{ik}] \in [H]^{M \times N}$, 
we define its distributional divergence by
\[
\diver \bm{Z} := 
\left[ \sum_{k=1}^N \partial_k z_{ik} \right]
\in \mathcal{D}'(\Omega)^M.
\]
\end{notn}

\section{Main results}\label{secmain}
\begin{notn}\label{notnsp}
  Throughout this paper, we fix a finite time $T>0$ and a spatial dimension $N \in \mathbb{N}$. 
Let $\Omega \subset \mathbb{R}^N$ be a bounded domain, $ \Gamma := \partial \Omega $ with the unit outer normal $ \bm{n}_\Gamma $, and $ \Gamma $ has the Lipschitz regularity when $ N > 1 $. We further define the time-space cylinders $Q := (0,T)\times \Omega$ and $\Sigma := (0,T)\times \Gamma$, and introduce the basic functional spaces as follows.
\begin{align*}
  H:=L^2(\Omega),~V:=H^1(\Omega),~\mathscr{H}:=L^2(0,T;H),\mbox{ and }\mathscr{V}:=L^2(0,T;V)
\end{align*}
\end{notn}
Based on the above notation, the main results of this paper are established under the following assumptions.
\begin{description}
  \item[(A0)]$ M \in \N $, $p \in (2,\infty) \cap [N,\infty)$, $ \mu > 0 $, and $ \nu \geq 0 $ are fixed constant. 
  \item[(A1)] $\alpha : \R^M \longrightarrow [0,\infty)$ is a fixed function such that $\alpha \in W^{2,\infty}(\R^M) \cap C^2(\R^M)$.
  \item[(A2)] $G : (x, \bm{u}) \in \Omega \times \R^M \longrightarrow G(x, \bm{u}) \in [0,\infty)$ is a fixed function, with variables $ x \in \Omega $ and $ \bm{u} = {^\top}[u_1, \dots, u_M] \in \R^M $, fulfilling the following conditions:
      \begin{description}
          \item[$\bullet$] for any $ \bm{u} \in \R^M $, $ G(\cdot, \bm{u}) \in L^1(\Omega) $,
          \item[$\bullet$] for a.e. $ x \in \Omega $, $G(x, \cdot) \in C^{1, 1}(\R^M)$, so that, for a.e. $ x \in \Omega $, its gradient $ \nabla_{\bm{u}}G(x, \cdot) = \bigl[ \partial_{u_i} G(x, \cdot) \bigr]_{1 \leq i \leq M} $ and Hessian  $ \nabla_{\bm{u}}^2 G(x, \cdot) = \bigl[ \partial_{u_i} \partial_{u_j} G(x, \cdot) \bigr]_{1 \leq i, j \leq M} $ are bounded on $ \R^M $; 
          \item[$\bullet$] the functions $ (x, \bm{u}) \in \Omega \times \R^M \mapsto |\nabla_{\bm{u}} G(x, \bm{u})| $ and $ (x, \bm{u}) \in \Omega \times \R^M \mapsto |\nabla_{\bm{u}}^2 G(x, \bm{u})| $ belong to $ L^\infty(\Omega \times \R^M) $.
      \end{description}
  \item[(A3)] $B : \R^M \longrightarrow \mathcal{L}(\R^{M \times N}; \R^{M \times N}) $ is a nonlinear operator which is defined as:
      \begin{gather}
          B: \bm{u} \in \R^M \mapsto B(\bm{u}) W := W B_0(\bm{u}) \in \R^{M \times N}
      \end{gather}
        with use of a matrix-valued function:
  \begin{align*}
      B_0 \in [W^{2,\infty}(\R^M) \cap C^2(\R^M)]^{N \times N}.
  \end{align*}
  Also, $[\nabla B] : \R^M \longrightarrow \mathcal{L}(\R^M;\mathcal{L}(\R^{M \times N}; \R^{M \times N})) $ is the differential of $B$, i.e. 
  \begin{align*}
    [\nabla B]:\bm{u}\in\R^M \mapsto [\nabla B](\bm{u})W := W [\nabla B_0](\bm{u}) \in (\R^{M \times N})^M.
\end{align*}
Furthermore, the following operator will be used throughout this paper:
\begin{align*}
  &W[\nabla B_0](\bm{v}):={}^\top\left[W[\partial_{v_1}B_0](\bm{v}),\dots,W[\partial_{v_M}B_0](\bm{v})\right]\in (\R^{M\times N})^M,
  \\
  &Z:W[\nabla B_0](\bm{v}):={}^\top\left[Z:W[\partial_{v_1}B_0](\bm{v}),\dots,Z:W[\partial_{v_M}B_0](\bm{v})\right]\in\R^M,
  \\
  &\hspace{15ex}\mbox{for any }\bm{v}\in\R^M, \mbox{ for any }W,Z\in \R^{M\times N}.
\end{align*}
  
  \noindent
        Moreover, $ B^* : \R^M \longrightarrow \mathcal{L}(\R^{M \times N}; \R^{M \times N}) $ is a conjugate operator defined as:
      \begin{gather}
          B^*: \bm{u} \in \R^M \mapsto B^*(\bm{u}) W := W \,{}^\top B_0(\bm{u}) \in \R^{M \times N}.
      \end{gather}
  \item[(A4)] $A : \R^M \longrightarrow \R^{M \times M}$ is a given function such that $A\in W^{1,\infty}(\R^M;\R^{M \times M})$, and for any $\bm{v} \in \R^M$, $A(\bm{v})$ is symmetric matrix, and there exists a constant $C_A > 0$ such that
  \begin{gather*}
    ^\top \bm{w}A(\bm{v})\bm{w}\geq C_A|\bm{w}|^2, \mbox{ for all }\bm{v},\bm{w}\in\R^M.
  \end{gather*}  
  \item[(A5)] $\Upsilon_p : \R^{M \times N} \longrightarrow [0,\infty)$ is a $C^1$-class convex function such that there exists a constant $C_\Upsilon > 0$ satisfying
  \begin{gather*}
    \frac{1}{C_\Upsilon}(|W|^p-1)\leq \Upsilon_p(W)\leq C_\Upsilon(|W|^p+1), \mbox{ for any }W\in\R^{M\times N}, \mbox{ and }
    \\
    (\nabla \Upsilon_p(W_1)-\nabla \Upsilon_p(W_2)):(W_1-W_2)\geq C_\Upsilon|W_1-W_2|^p, \mbox{ for all }W_1,W_2\in\R^{M\times N}.
  \end{gather*}
  \item[(A6)] $\gamma: \R^{M \times N} \longrightarrow [0,\infty)$ is a fixed convex function satisfying the following conditions: there exists a constant $C_\gamma > 0$ such that
\begin{gather*}
   \gamma(W) \leq C_\gamma(|W| + 1), \mbox{ and } \nabla \gamma \in L^\infty(\R^{M \times N}; \R^{M \times N}).
\end{gather*}
  \item[(A7)] $\bm{u}_0$ is a fixed initial data such that 
  \begin{align*}
    \bm{u}_0\in [W^{1,p}(\Omega)]^M, \mbox{ if }\nu\in(0,1), \mbox{ and } \bm{u}_0\in [V]^M, \mbox{ if }\nu=0.
  \end{align*}
\end{description}
\begin{rem}\label{Rem.(A2)}
  Note that the assumption (A2) implies the following condition.
    \begin{description}
      \item[(A2)$'$]
        The function $ x \in \Omega \mapsto  G(x, \bm{0}) $ belongs to $ L^1(\Omega) $, and there exists a constant $ L_G $, independent of variables $ x \in \Omega $ and $ \bm{u} \in \R^M $, such that
          \begin{gather}
            \bigl|G(x, \bm{u}) -G(x, \bm{\tilde{u}})\bigr| +\bigl|\nabla_{\bm{u}}G(x, \bm{u}) -\nabla_{\bm{u}}G(x, \bm{\tilde{u}})\bigr| \leq L_G |\bm{u} -\bm{\tilde{u}}|, \\
            \mbox{for a.e. $ x\in \Omega $, and all $ \bm{u}, \bm{\tilde{u}} \in \R^M $.}
          \end{gather}
    \end{description}
\end{rem}

Next, let us give the definition of the solution to the system (S)$_\nu$.
\medskip

\begin{definition}
  A function $\bm{u} \in [\mathscr{H}]^M$ is called a solution to the system (S)$_\nu$ if and only if $\bm{u}$ fulfills the following conditions:
  \begin{description}
      \item[(S0)] The function $\bm{u}$ belongs to $W^{1,2}(0,T;[V]^M)$, and if $\nu > 0$, then $\bm{u}$ belongs to $L^\infty(0,T; [W^{1,p}(\Omega)]^M)$.
    \item[(S1)] There exists a vector-value function $\bm{w}^*\in [\mathscr{H}]^{M\times N}$ such that:
      \begin{gather*}
        \bm{w}^*(t,x)\in\partial\gamma(B(\bm{u}(t,x))\nabla \bm{u}(t,x)) \mbox{ in } \R^{M\times N}, \mbox{ for a.e. }(t,x) \in Q,
      \end{gather*}
      and $\bm{u}$ solves the following variational inequality: 
      \begin{gather*}
        (A(\bm{u}(t))\partial_t\bm{u}(t),\bm{u}(t)-\bm{\varphi})_{[H]^M}+\mu(\nabla\partial_t\bm{u}(t),\nabla(\bm{u}(t)-\bm{\varphi}))_{[H]^{M\times N}}
    \\
    +\nu\int_\Omega \nabla \Upsilon_p(\nabla \bm{u}(t)):\nabla (\bm{u}(t)-\bm{\varphi})\,dx+(\nabla G(x,\bm{u}(t)),\bm{u}(t)-\bm{\varphi})_{[H]^M}
    \\
    +(\alpha(\bm{u}(t))\bm{w}^*(t):[\nabla{B}](\bm{u}(t))\nabla\bm{u}(t),\bm{u}(t)-\bm{\varphi})_{[H]^M}
    \\
    +([\nabla\alpha](\bm{u}(t))\gamma({B}(\bm{u}(t))\nabla\bm{u}(t)),\bm{u}(t)-\bm{\varphi})_{[H]^M}
    \\
    +\int_{\Omega}\alpha(\bm{u}(t))\gamma(B(\bm{u}(t))\nabla \bm{u}(t))\,dx\leq\int_{\Omega}\alpha(\bm{u}(t))\gamma (B(\bm{u}(t))\nabla\bm{\varphi})\,dx,
    \\
    \end{gather*}
    for a.e. $t \in (0,T)$ and for any $\bm{\varphi} \in [V]^M$, where $\bm{\varphi} \in [W^{1,p}(\Omega)]^M$ if $\nu > 0$.
      \item[(S2)] $ \bm{u}(0) = \bm{u}_0 $ in $ [H]^M $.
  \end{description}
\end{definition}

\begin{mainThm}(Existence and uniqueness of solution with energy-dissipation)\label{mainThm1}
  Under the assumptions (A0)--(A7), the system (S)$_\nu$ admits a solution $\bm{u}$. Moreover, the solution $\bm{u}$ satisfies the following energy-inequality:
  \begin{gather}
    \frac{C_A}{4}\int_{s}^{t}|\partial_t\bm{u}(\sigma)|^2_{[H]^M}\,d\sigma
      +\frac{\mu}{2}\int_{s}^{t}|\nabla\partial_t\bm{u}(\sigma)|^2_{[H]^{M\times N}}\,d\sigma
      +E_\nu(\bm{u}(t))\leq E_\nu(\bm{u}(s)),
      \\
      \mbox{for a.e. $ s \in [0, T) $ including $ s=0 $, and any } t \in [s, T].
    \label{ene-inq1}
  \end{gather}
\end{mainThm}

  \begin{mainThm}(Uniqueness and continuous dependence of solution)\label{mainThm2}
  Under the assumptions (A0)--(A7), let $ \bm{u}_{0, k} \in [W^{1, p}(\Omega)]^M $, $ k = 1, 2 $, be two initial data, and let $ \bm{u}_k \in [\mathscr{H}]^M $, $ k = 1, 2 $, be two solutions to the system (S)$_\nu$ with initial data $ \bm{u}_0 = \bm{u}_{0, k} $, $ k = 1, 2 $. Additionally, we assume that $ \nu>0 $, $N\leq 6$, $A\in C^1(\R^M;\R^{M\times M})$ and $\gamma \in C^{1,1}(\R^{M\times N})\cap C^2(\R^{M\times N})$. Let $ J(t) $ be the function of time defined as:
  \begin{gather}\label{defOfJ}
      J(t) := |\sqrt{A(\bm{u}_2(t))}(\bm{u}_1 -\bm{u}_2)(t)|^2_{[H]^M} +\mu |\nabla (\bm{u}_1 -\bm{u}_2)(t)|_{[H]^{M\times N}}^2, \mbox{ for all } t \in [0,T].
  \end{gather}
  Then, there exists a positive constant $ C_* >0 $ such that 
  \begin{gather}\label{cncl_J}
        J(t)\leq \exp(C_*(1+T)((1+|\bm{u}_1|_{L^\infty(0,T;[W^{1,p}(\Omega)]^M)})^2+|\partial_t\bm{u}_1|_{[\mathscr{V}]^M}+|\partial_t\bm{u}_2|_{[\mathscr{V}]^M}))J(0), 
        \\
        \mbox{ for all } t \in [0,T].
  \end{gather}
  Moreover, we can replace the phrase ``a.e. $ s \in [0, T) $'' to ``for any $ s \in [0, T] $'' the energy-inequality \eqref{ene-inq1}.
\end{mainThm}

\section{Time-discretization scheme}\label{timedis003}
In this section, we focus on the time-discretization scheme of our system (S)$_\nu$. Let $ \nu\in(0,1) $ be a fixed positive constant. Let $ m \in \N $ be the division number of the time-interval $ (0, T) $. Let $\tau := \frac{T}{m}$ be the time-step size, and let $\{t^i\}_{i=1}^m$ be a time sequence defined by $t^i := i\tau$, for $i = 1, 2, \dots, m$. Moreover, throughout this section, we suppose that the function $\gamma$ belongs to $C^{1,1}(\R^{M\times N})$ and $C^2(\R^{M\times N})$.

Based on these, we set up the time-discretization scheme (AP)$^\tau_\nu$ as an approximating problem of (S)$_\nu$:
\\

(AP)$^\tau_\nu$: \, For any initial value $\bm{u}^0_\nu \in [W^{1,p}(\Omega)]^M$, find a sequence of $ \{\bm{u}^i_\nu\}_{i=1}^m \subset [W^{1,p}(\Omega)]^M $ such that 
  \begin{align*}
  &\frac{A(\bm{u}^{i-1}_\nu)}{\tau}(\bm{u}^i_\nu-\bm{u}^{i-1}_\nu)-\mathrm{div}\Big(\alpha(\bm{u}^i_\nu) B^*(\bm{u}^i_\nu) \nabla\gamma({B}(\bm{u}^i_\nu)\nabla\bm{u}^i_\nu)+\nu \nabla\Upsilon_p(\nabla \bm{u}^i_\nu) 
  \\
  &\qquad+\frac{\mu}{\tau}\nabla(\bm{u}^i_\nu-\bm{u}^{i-1}_\nu)\Big)+\nabla_{\bm{u}} G(x,\bm{u}^i_\nu)+[\nabla\alpha](\bm{u}^{i-1}_\nu)\gamma({B}(\bm{u}^{i-1}_\nu)\nabla\bm{u}^{i-1}_\nu)
  \\
  &\qquad+\alpha(\bm{u}^{i-1}_\nu)\nabla\gamma({B}(\bm{u}^{i-1}_\nu)\nabla\bm{u}^{i-1}_\nu):[\nabla{B}](\bm{u}^{i-1}_\nu)\nabla\bm{u}^{i-1}_\nu=\bm{0}~\mathrm{in}~[H]^M,
  \\
  &\Big(\alpha(\bm{u}^i_\nu) B^*(\bm{u}^i_\nu) \nabla\gamma({B}(\bm{u}^i_\nu)\nabla\bm{u}^i_\nu)+\nu \nabla\Upsilon_p(\nabla \bm{u}^i_\nu)+\frac{\mu}{\tau}\nabla(\bm{u}^i_\nu-\bm{u}^{i-1}_\nu)\Big)\bm{n}_\Gamma=\bm{0}~\mbox{on}~\Gamma,
  \\
  &\hspace{20ex}~~\mbox{for }i=1,2,\dots,m.
  \end{align*}

The solution to (AP)$^\tau_\nu$ is given as follows.
\begin{definition}
  A sequence of function $\{\bm{u}^{i}_\nu\}_{i=1}^m$ is called a solution to (AP)$^\tau_\nu$ with the initial data $\bm{u}_\nu^0 \in [W^{1,p}(\Omega)]^M$, if $\{\bm{u}^{i}_\nu\}_{i=1}^m \subset [W^{1,p}(\Omega)]^M$, and $\bm{u}^{i}_\nu$ fulfills the following variational identity for any $i = 1,2,\dots,m$:
    \begin{align}
    &\frac{1}{\tau}(A(\bm{u}^{i-1}_\nu)(\bm{u}^i_\nu-\bm{u}^{i-1}_\nu),\bm{\varphi})_{[H]^M}+\frac{\mu}{\tau}(\nabla(\bm{u}^i_\nu-\bm{u}^{i-1}_\nu),\nabla\bm{\varphi})_{[H]^{M\times N}}
    \\
    &\quad+(\alpha(\bm{u}^i_\nu) B^*(\bm{u}^i_\nu) \nabla\gamma({B}(\bm{u}^i_\nu)\nabla\bm{u}^i_\nu),\nabla\bm{\varphi})_{[H]^{M\times N}}
    +(\nabla_{\bm{u}} G(x,\bm{u}^i_\nu),\bm{\varphi})_{[H]^M}
    \\
    &\quad +\nu\int_\Omega \nabla \Upsilon_p(\nabla \bm{u}^i_\nu):\nabla\bm{\varphi}\,dx
    +([\nabla\alpha](\bm{u}^{i-1}_\nu)\gamma({B}(\bm{u}^{i-1}_\nu)\nabla\bm{u}^{i-1}_\nu),\bm{\varphi})_{[H]^M}
    \\
    &\quad+(\alpha(\bm{u}^{i-1}_\nu)\nabla\gamma({B}(\bm{u}^{i-1}_\nu)\nabla\bm{u}^{i-1}_\nu):[\nabla{B}](\bm{u}^{i-1}_\nu)\nabla\bm{u}^{i-1}_\nu,\bm{\varphi})_{[H]^M}=0,\label{3TimeDis-01}
    \\
    &\hspace*{20ex}\mbox{ for  any }\bm{\varphi}\in [W^{1,p}(\Omega)]^M.
  \end{align}
\end{definition}

\begin{theorem}(Existence, uniqueness of solution with energy-dissipation)\label{003Thm1}
  There exists a sufficiently small constant $\tau_0:=\tau_0(\nu,\|\nabla\gamma\|_{W^{1,\infty}})\in (0,1)$, possibly dependent on $\nu$ and $\gamma$, such that for any $ \tau \in (0,\tau_0) $, $\mathrm{(AP)}^\tau_\nu$ admits a unique solution $\{\bm{u}^i_\nu\}_{i=1}^m$. Moreover, the solution $\{\bm{u}^i_\nu\}_{i=1}^m$ fulfills the following energy-inequality:
  \begin{gather}
    \frac{C_A}{4\tau}|\bm{u}^{i}_\nu-\bm{u}^{i-1}_\nu|^2_{[H]^M}+\frac{\mu}{2\tau}|\nabla(\bm{u}^{i}_\nu-\bm{u}^{i-1}_\nu)|^2_{[H]^{M\times N}} +E_\nu(\bm{u}^{i}_\nu) \leq E_\nu(\bm{u}^{i-1}_\nu),
    \nonumber
    \\
    \mbox{ for any } i=1,2,\dots,m.\label{f-ene0}
  \end{gather}
\end{theorem}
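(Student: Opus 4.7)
My plan is to construct $\{\bm{u}^i_\nu\}_{i=1}^m$ inductively by combining a Schauder fixed-point argument that decouples the state-dependent coefficients from the implicit operator, with a subsequent testing argument yielding the energy inequality \eqref{f-ene0}. Fix $i$ and assume $\bm{u}^{i-1}_\nu \in [W^{1,p}(\Omega)]^M$ is already known. For any $\bm{v} \in [W^{1,p}(\Omega)]^M$, I introduce the auxiliary functional
\begin{align*}
\mathcal{F}_{\bm{v}}(\bm{u}) := {}& \frac{1}{2\tau}\int_\Omega A(\bm{u}^{i-1}_\nu)\bm{u}\cdot\bm{u}\,dx + \frac{\mu}{2\tau}\int_\Omega|\nabla\bm{u}|^2\,dx + \int_\Omega \alpha(\bm{v})\gamma(B(\bm{v})\nabla\bm{u})\,dx \\
& + \nu\int_\Omega \Upsilon_p(\nabla\bm{u})\,dx + \int_\Omega G(x,\bm{u})\,dx - \langle \bm{f}^{i-1},\bm{u}\rangle,
\end{align*}
in which $\bm{f}^{i-1}$ gathers the explicit linear data involving $A(\bm{u}^{i-1}_\nu)\bm{u}^{i-1}_\nu$, $\frac{\mu}{\tau}\nabla\bm{u}^{i-1}_\nu$, and the $[\nabla\alpha](\bm{u}^{i-1}_\nu)$- and $[\nabla B](\bm{u}^{i-1}_\nu)$-contributions. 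By (A1)--(A6), $\mathcal{F}_{\bm{v}}$ is weakly lower semicontinuous and coercive on $[W^{1,p}(\Omega)]^M$, the coercivity being provided by $\Upsilon_p$. Although $G$ need not be convex, the Lipschitz bound $L_G$ from Remark~\ref{Rem.(A2)} combined with $A(\bm{u}^{i-1}_\nu) \geq C_A I_M$ makes $\mathcal{F}_{\bm{v}}$ strictly convex as soon as $\tau < C_A/L_G$, so it admits a unique minimizer $\Lambda(\bm{v}) \in [W^{1,p}(\Omega)]^M$, characterized by the Euler--Lagrange equation \eqref{3TimeDis-01} with $\bm{v}$ in place of $\bm{u}^i_\nu$ inside $\alpha, B, B^*$.

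The next step is to extract a fixed point of $\Lambda$ by Schauder's theorem on a closed ball in $[W^{1,p}(\Omega)]^M$. Testing the Euler--Lagrange equation for $\Lambda(\bm{v})$ against $\Lambda(\bm{v})$ yields $\|\Lambda(\bm{v})\|_{[W^{1,p}(\Omega)]^M} \leq R$, uniformly in $\bm{v}$, for some $R$ depending on $\bm{u}^{i-1}_\nu$, $\nu$, and the data. The compact embedding $W^{1,p}(\Omega) \hookrightarrow C(\overline{\Omega})$ (for $p > N$) makes $\Lambda$ compact, while its continuity on the ball follows from Mosco/$\Gamma$-convergence of $\bm{u} \mapsto \int_\Omega \alpha(\bm{v}_n)\gamma(B(\bm{v}_n)\nabla\bm{u})\,dx$ as $\bm{v}_n \to \bm{v}$ uniformly (cf.\ Example~\ref{ex2} and Fact~3), together with uniqueness of the minimizer. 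Schauder's theorem then produces a fixed point $\bm{u}^i_\nu$ solving (AP)$^\tau_\nu$. Uniqueness is established by subtracting the variational identities for two candidates and testing with their difference: the coercivity $\frac{C_A}{\tau}|\cdot|^2_{[H]^M} + \frac{\mu}{\tau}|\nabla\cdot|^2$ dominates the Lipschitz perturbations of the remaining terms for $\tau$ small enough; the dependence of $\tau_0$ on $\nu$ enters through the a priori $W^{1,p}$-bound, and the dependence on $\|\nabla\gamma\|_{W^{1,\infty}}$ enters through the second-order control of $\bm{u} \mapsto \gamma(B(\bm{u})\nabla\bm{u})$.

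For the energy inequality \eqref{f-ene0}, I test \eqref{3TimeDis-01} with $\bm{\varphi} = \bm{u}^i_\nu - \bm{u}^{i-1}_\nu$. The $A$- and $\mu$-terms directly produce $\frac{C_A}{\tau}|\bm{u}^i_\nu - \bm{u}^{i-1}_\nu|^2_{[H]^M}$ and $\frac{\mu}{\tau}|\nabla(\bm{u}^i_\nu - \bm{u}^{i-1}_\nu)|^2_{[H]^{M\times N}}$. Convexity of $\bm{W} \mapsto \alpha(\bm{u}^i_\nu)\gamma(B(\bm{u}^i_\nu)\bm{W})$ and of $\Upsilon_p$ yields subgradient inequalities producing $\int\alpha(\bm{u}^i_\nu)\gamma(B(\bm{u}^i_\nu)\nabla\bm{u}^i_\nu)\,dx - \int\alpha(\bm{u}^i_\nu)\gamma(B(\bm{u}^i_\nu)\nabla\bm{u}^{i-1}_\nu)\,dx$ and $\nu \bigl[\int\Upsilon_p(\nabla\bm{u}^i_\nu) - \int\Upsilon_p(\nabla\bm{u}^{i-1}_\nu)\bigr]$ respectively. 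The crucial observation is that the explicit terms $[\nabla\alpha](\bm{u}^{i-1}_\nu)\gamma(B(\bm{u}^{i-1}_\nu)\nabla\bm{u}^{i-1}_\nu) + \alpha(\bm{u}^{i-1}_\nu)\nabla\gamma(B(\bm{u}^{i-1}_\nu)\nabla\bm{u}^{i-1}_\nu):[\nabla B](\bm{u}^{i-1}_\nu)\nabla\bm{u}^{i-1}_\nu$ equal exactly $\nabla_{\bm{u}}\phi(\bm{u}^{i-1}_\nu)$ for the auxiliary map $\phi(\bm{u}) := \alpha(\bm{u})\gamma(B(\bm{u})\nabla\bm{u}^{i-1}_\nu(x))$; a Taylor expansion of $\phi$ at $\bm{u}^{i-1}_\nu$ converts these into $\int\alpha(\bm{u}^i_\nu)\gamma(B(\bm{u}^i_\nu)\nabla\bm{u}^{i-1}_\nu)\,dx - \int\alpha(\bm{u}^{i-1}_\nu)\gamma(B(\bm{u}^{i-1}_\nu)\nabla\bm{u}^{i-1}_\nu)\,dx$ plus a quadratic remainder of order $(1 + |\nabla\bm{u}^{i-1}_\nu|^2)|\bm{u}^i_\nu - \bm{u}^{i-1}_\nu|^2$ whose constant is controlled by $\|\alpha\|_{W^{2,\infty}}$, $\|B_0\|_{W^{2,\infty}}$, and $\|\nabla\gamma\|_{W^{1,\infty}}$. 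Together with the analogous $G$-Taylor error $\tfrac{L_G}{2}|\bm{u}^i_\nu - \bm{u}^{i-1}_\nu|^2_{[H]^M}$ from Remark~\ref{Rem.(A2)}, the Sobolev embedding $W^{1,p}(\Omega) \hookrightarrow L^\infty(\Omega)$, and an inductive $W^{1,p}$-bound on $\bm{u}^{i-1}_\nu$ (through the already-proved energy decrease), all remainders are absorbed into the halved LHS $\frac{C_A}{4\tau}|\bm{u}^i_\nu - \bm{u}^{i-1}_\nu|^2_{[H]^M} + \frac{\mu}{2\tau}|\nabla(\bm{u}^i_\nu - \bm{u}^{i-1}_\nu)|^2$ as soon as $\tau \leq \tau_0(\nu,\|\nabla\gamma\|_{W^{1,\infty}})$. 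The main obstacle is precisely this last absorption: to keep $\tau_0$ uniform in $i$, one must close an inductive loop between \eqref{f-ene0} and the $W^{1,p}$-a priori bound on $\bm{u}^{i-1}_\nu$ that underlies the Taylor-remainder estimate.
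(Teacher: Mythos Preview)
Your proposal is correct in outline and lands on the same energy-inequality argument as the paper, but it organises the existence step differently. The paper does not use Schauder: it shows that the frozen-coefficient map $S_\tau:\overline{\bm u}\mapsto\bm u$ (your $\Lambda$) is a \emph{contraction} on $[V]^M$ for $\tau$ below an explicit threshold depending on $\nu$, $\|\nabla^2\gamma\|_{L^\infty}$ and $E_\nu(\bm u^{i-1}_\nu)$ (Lemma~\ref{lem001uka}), and then Banach's fixed-point theorem yields existence and uniqueness simultaneously. Your Schauder route is valid, but note that the uniqueness computation you outline---subtracting the identities for two solutions and absorbing Lipschitz errors into $\tfrac{C_A}{\tau}|\cdot|^2_{[H]^M}+\tfrac{\mu}{\tau}|\nabla\cdot|^2$---is exactly the contraction estimate the paper uses; so in effect you end up proving contraction anyway, which makes the Schauder machinery (compactness, $\Gamma$-continuity of $\Lambda$) extra work. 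The advantage of the paper's approach is that the explicit contraction constant immediately exhibits the dependence $\tau_0=\tau_0(\nu,\|\nabla\gamma\|_{W^{1,\infty}})$, whereas Schauder by itself is non-quantitative.

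For the energy inequality your mechanism matches the paper's Lemma~\ref{lem001dm}: test with $\bm u^i_\nu-\bm u^{i-1}_\nu$, convexity in the gradient slot, Taylor in the state slot for $f(\bm v,W)=\alpha(\bm v)\gamma(B(\bm v)W)$, then absorb the remainder $\int(1+|\nabla\bm u^{i-1}_\nu|^2)|\bm u^i_\nu-\bm u^{i-1}_\nu|^2$. One refinement: the paper does not use $W^{1,p}\hookrightarrow L^\infty$ on the difference (that would not feed back into the $[V]^M$-norm on the left), but rather H\"older with $|\nabla\bm u^{i-1}_\nu|^2\in L^{p/2}$ and $H^1\hookrightarrow L^{2p/(p-2)}$ (or $L^{2N/(N-2)}$ for $N\geq3$), so that the remainder is bounded by $C\nu^{-2/p}(1+E_\nu(\bm u^{i-1}_\nu)^{2/p})\bigl(C_A|\bm u^i_\nu-\bm u^{i-1}_\nu|^2_{[H]^M}+\mu|\nabla(\bm u^i_\nu-\bm u^{i-1}_\nu)|^2\bigr)$ and absorbed for small $\tau$. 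You have correctly identified the decisive point: the threshold depends on $E_\nu(\bm u^{i-1}_\nu)$, and the inductive loop closes because the energy decreases at each step, so $\tau_3(\bm u^{i-1}_\nu,\nu,\|\nabla^2\gamma\|_{L^\infty})\geq\tau_3(\bm u^{0}_\nu,\nu,\|\nabla^2\gamma\|_{L^\infty})=:\tau_0$ is uniform in $i$.
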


We prepare some lemmas for the proof of Theorem \ref{003Thm1}. 
\begin{lemma}\label{lem002}
For any $\bm{w}_0\in [V]^M$, there exists a sequence $ \{\bm{w}_\nu\}_{\nu\in(0,1)}\subset[W^{1,p}(\Omega)]^M $ such that 
\begin{align*}
  \bm{w}_\nu \rightarrow \bm{w}_0 \mbox{ in } [V]^M, \mbox{ and } E_\nu(\bm{w}_\nu)\rightarrow E(\bm{w}_0) \mbox{ as } \nu\downarrow 0.
\end{align*}
  \begin{proof}
    Since $C^\infty(\overline{\Omega})$ is dense in $V$, for any $\bm{w}_0\in[V]^M$, there exists a sequence $\{\widehat{\bm{w}}_{n}\}_{n\in\N}\subset [C^\infty(\overline{\Omega})]^M\subset [W^{1,p}(\Omega)]^M$ such that 
    \begin{align*}
      \begin{aligned}
        &\widehat{\bm{w}}_{n}\rightarrow \bm{w}_0 \mbox{ in } [V]^M,
        \\
        \widehat{\bm{w}}_n(x)\rightarrow \bm{w}_0(x) &\mbox{ and }\nabla \widehat{\bm{w}}_n(x)\rightarrow \nabla \bm{w}_0(x) \mbox{ for a.e. } x\in\Omega,
      \end{aligned}~~\mbox{ as } n\rightarrow\infty.
    \end{align*}
    Here, we set 
    \begin{align*}
      \widehat{\nu}_0:=1,~\widehat{\nu}_n\in (0,\widehat{\nu}_{n-1}), \mbox{ for } n\in\N, \mbox{ and } \nu\int_\Omega |\nabla \widehat{\bm{w}}_{n}|^p\,dx<2^{-n}, \mbox{ for } \nu\in(0,\widehat{\nu}_n),~n\in\N.
    \end{align*}
    Moreover, we define a sequence $\{\bm{w}_{\nu}\}_{\nu\in(0,1)}$ as follows:
    \begin{align*}
      \bm{w}_\nu:=\left\{
        \begin{aligned}
          & \widehat{\bm{w}}_n \mbox{ if } \widehat{\nu}_{n+1}\leq \nu<\widehat{\nu}_n,~n=0,1,2,\dots,
          \\
          & \widehat{\bm{w}}_1 \mbox{ if } \widehat{\nu}_0\leq \nu.
        \end{aligned}
      \right.
    \end{align*}
    Then, we can see that
    \begin{align}\label{lem0021}
      \bm{w}_\nu\rightarrow \bm{w}_0 \mbox{ in } [V]^M,\mbox{ and }\nu\int_\Omega |\nabla \bm{w}_\nu|^p\,dx\rightarrow 0 \mbox{ as } \nu\downarrow 0.
    \end{align}
    Also, from (A5), \eqref{lem0021} and Lebesgue's dominated convergence theorem, we can derive the following convergences as $\nu\downarrow 0$:
    \begin{align*}
      &\nu\int_\Omega \Upsilon_p(\nabla \bm{w}_\nu)\,dx\rightarrow 0,
      \\
      &\int_\Omega G(x,\bm{w}_\nu)\,dx\rightarrow \int_\Omega G(x,\bm{w}_0)\,dx.
    \end{align*}
    Moreover, by using (A3), \eqref{lem0021} and Lebesgue's dominated convergence theorem, we obtain
    \begin{align*}
      \nabla\bm{w}_\nu B_0(\bm{w}_\nu) \rightarrow \nabla \bm{w}_0 B_0(\bm{w}_0)\mbox{ in }[H]^{M\times N} \mbox{ as }\nu \downarrow 0.
    \end{align*}
    Therefore, from the Lipschitz continuity of $\alpha$ and $\gamma$, it follows that
    \begin{align*}
      \int_\Omega \alpha(\bm{w}_\nu)\gamma(\nabla\bm{w}_\nu B_0(\bm{w}_\nu))\,dx\rightarrow \int_\Omega \alpha(\bm{w}_0)\gamma(\nabla\bm{w}_0B_0(\bm{w}_0))\,dx \mbox{ as } \nu\downarrow 0.
    \end{align*}
    Thus, the proof is complete.
  \end{proof}
\end{lemma}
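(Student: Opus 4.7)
The plan is to combine a density argument with a diagonal extraction that kills the $\nu$-weighted $p$-growth term. Since $\bm{w}_0$ is only in $[V]^M$, we cannot put it directly into $E_\nu$ (which controls $W^{1,p}$), so we must approximate by smoother functions and then let the two parameters (smoothing index and $\nu$) interact in a controlled way.

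\textbf{Step 1: Smooth approximation in $[V]^M$.} Since $\Omega$ is a bounded Lipschitz domain, $C^\infty(\overline{\Omega})$ is dense in $V$. Thus for $\bm{w}_0 \in [V]^M$ I would pick $\{\widehat{\bm{w}}_n\}_{n\in\N} \subset [C^\infty(\overline{\Omega})]^M \subset [W^{1,p}(\Omega)]^M$ with $\widehat{\bm{w}}_n \to \bm{w}_0$ in $[V]^M$, and, passing to a subsequence if needed, $\widehat{\bm{w}}_n \to \bm{w}_0$ and $\nabla\widehat{\bm{w}}_n \to \nabla\bm{w}_0$ a.e.\ on $\Omega$.

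\textbf{Step 2: Diagonal reparametrization in $\nu$.} Since each $\widehat{\bm{w}}_n$ is smooth, $\int_\Omega |\nabla \widehat{\bm{w}}_n|^p\,dx$ is a finite constant, so I can choose a strictly decreasing sequence $\widehat{\nu}_n \downarrow 0$ (with $\widehat{\nu}_0 = 1$) small enough that $\nu \int_\Omega |\nabla \widehat{\bm{w}}_n|^p\,dx < 2^{-n}$ for every $\nu \in (0,\widehat{\nu}_n)$. Define $\bm{w}_\nu := \widehat{\bm{w}}_n$ whenever $\nu \in [\widehat{\nu}_{n+1}, \widehat{\nu}_n)$ (and, say, $\bm{w}_\nu := \widehat{\bm{w}}_1$ for $\nu \geq \widehat{\nu}_0$). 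Then $\bm{w}_\nu \in [W^{1,p}(\Omega)]^M$ for every $\nu \in (0,1)$, $\bm{w}_\nu \to \bm{w}_0$ in $[V]^M$ as $\nu \downarrow 0$, and by construction $\nu \int_\Omega |\nabla \bm{w}_\nu|^p\,dx \to 0$.

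\textbf{Step 3: Termwise convergence of $E_\nu(\bm{w}_\nu)$.} I would handle the three pieces of $E_\nu$ separately:
\begin{itemize}
\item By the upper bound in (A5) and Step 2, $\nu \int_\Omega \Upsilon_p(\nabla \bm{w}_\nu)\,dx \leq C_\Upsilon\bigl(\nu \int_\Omega |\nabla\bm{w}_\nu|^p\,dx + \nu|\Omega|\bigr) \to 0$.
\item By the Lipschitz bound from Remark \ref{Rem.(A2)} and $\bm{w}_\nu \to \bm{w}_0$ in $[H]^M$, dominated convergence gives $\int_\Omega G(x,\bm{w}_\nu)\,dx \to \int_\Omega G(x,\bm{w}_0)\,dx$.
\item For the anisotropic term, I would first show $B(\bm{w}_\nu)\nabla \bm{w}_\nu = \nabla \bm{w}_\nu\, B_0(\bm{w}_\nu) \to \nabla\bm{w}_0\, B_0(\bm{w}_0)$ in $[H]^{M\times N}$, using the boundedness and continuity of $B_0$ from (A3), the $V$-convergence of $\bm{w}_\nu$, and dominated convergence. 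Then, since $\alpha$ and $\gamma$ are Lipschitz (by (A1) and (A6)) and $\alpha$ is bounded, $\alpha(\bm{w}_\nu)\gamma(B(\bm{w}_\nu)\nabla \bm{w}_\nu) \to \alpha(\bm{w}_0)\gamma(B(\bm{w}_0)\nabla\bm{w}_0)$ in $L^1(\Omega)$.
\end{itemize}
Summing the three limits gives $E_\nu(\bm{w}_\nu) \to E(\bm{w}_0)$.

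\textbf{Main obstacle.} The substantive difficulty is the $\nu$-weighted $p$-term: we need the recovery sequence to simultaneously sit in $[W^{1,p}(\Omega)]^M$ and converge in the weaker $[V]^M$-topology while the weight $\nu$ becomes small. The diagonal device in Step 2 — matching the regularization index $n$ to $\nu$ via the thresholds $\widehat{\nu}_n$ — is the crux; once this is in place, the other two terms are a straightforward Lipschitz/dominated-convergence routine.
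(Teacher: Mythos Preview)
Your proposal is correct and follows essentially the same approach as the paper's proof: density of $C^\infty(\overline{\Omega})$ in $V$, the diagonal selection of thresholds $\widehat{\nu}_n$ to force $\nu\int_\Omega|\nabla\bm{w}_\nu|^p\,dx\to 0$, and then termwise convergence of $E_\nu$ via (A5), the Lipschitz bound on $G$, and the Lipschitz continuity of $\alpha$ and $\gamma$ combined with the $[H]^{M\times N}$-convergence of $\nabla\bm{w}_\nu B_0(\bm{w}_\nu)$. There is no substantive difference between your argument and the paper's.
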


\begin{rem}\label{rem001}
    From the proof of Lemma \ref{lem002}, there exists a small constant $\nu_0\in(0,1)$ such that for any $\nu\in(0,\nu_0)$,
    \begin{align*}
      |E_\nu(\bm{w}_\nu)-E(\bm{w}_0)|<1.
    \end{align*}
\end{rem}

First, for an arbitrary $\bm{u}^\dagger\in[W^{1,p}(\Omega)]^M$ and $\overline{\bm{u}}\in[W^{1,p}(\Omega)]^M$, we consider the following elliptic problem $\mathrm{(E1)}_{\overline{\bm{u}}}$:
  \begin{align*}
  &\frac{A(\bm{u}^\dagger)}{\tau}(\bm{u}-\bm{u}^\dagger)-\mathrm{div}\Big(\alpha(\overline{\bm{u}}) B^*(\overline{\bm{u}}) \nabla\gamma({B}(\overline{\bm{u}})\nabla\bm{u})+\nu \nabla\Upsilon_p(\nabla \bm{u})
  \\
  &\qquad+\frac{\mu}{\tau}\nabla(\bm{u}-\bm{u}^\dagger)\Big)+\nabla_{\bm{u}} G(x,\overline{\bm{u}})+[\nabla\alpha](\bm{u}^\dagger)\gamma({B}(\bm{u}^\dagger)\nabla\bm{u}^\dagger)
  \\
  &\qquad+\alpha(\bm{u}^\dagger)\nabla\gamma({B}(\bm{u}^\dagger)\nabla\bm{u}^\dagger):[\nabla{B}](\bm{u}^\dagger)\nabla\bm{u}^\dagger=\bm{0}~\mathrm{in}~[H]^M.
  \end{align*}

\begin{lemma}\label{lem001u}
  For any $\tau\in(0,1)$, (E1)$_{\overline{\bm{u}}}$ admits a unique solution $\bm{u}\in[W^{1,p}(\Omega)]$ in the following variational sense:
  \begin{align*}
    &\frac{1}{\tau}(A(\bm{u}^\dagger)(\bm{u}-\bm{u}^\dagger),\bm{\psi})_{[H]^M}+\frac{\mu}{\tau}(\nabla(\bm{u}-\bm{u}^\dagger),\nabla\bm{\psi})_{[H]^{M\times N}}
    \\
    &\quad+(\alpha(\overline{\bm{u}}) B^*(\overline{\bm{u}}) \nabla\gamma({B}(\overline{\bm{u}})\nabla\bm{u}),\nabla\bm{\psi})_{[H]^{M\times N}}
    +(\nabla_{\bm{u}} G(x,\overline{\bm{u}}),\bm{\psi})_{[H]^M}
    \\
    &\quad +\nu\int_\Omega \nabla \Upsilon_p(\nabla \bm{u}):\nabla\bm{\psi}\,dx
    +([\nabla\alpha](\bm{u}^\dagger)\gamma({B}(\bm{u}^\dagger)\nabla\bm{u}^\dagger),\bm{\psi})_{[H]^M}
    \\
    &\quad+(\alpha(\bm{u}^\dagger)\nabla\gamma({B}(\bm{u}^\dagger)\nabla\bm{u}^\dagger):[\nabla{B}](\bm{u}^\dagger)\nabla\bm{u}^\dagger,\bm{\psi})_{[H]^M}=0,
    \mbox{ for  any }\bm{\psi}\in [W^{1,p}(\Omega)]^M.
  \end{align*}
  Moreover, there exist a small time-step constant $\tau_1\in(0,1)$, and a positive constant $C_0>0$, independent of $\overline{\bm{u}}\in [W^{1,p}(\Omega)]^M$, such that the unique solution $\bm{u}$ fulfill the following estimate:
    \begin{align}\label{lem0010}
    |\nabla \bm{u}|_{[L^p(\Omega)]^{M\times N}}^p\leq \frac{C_0}{\nu} (1+|\nabla \bm{u}^\dagger|_{[L^p(\Omega)]^{M\times N}}^p).
  \end{align}
  \begin{proof}
    First, for any $\bm{\overline{u}}\in[V]^M$, we consider a proper, l.s.c., strictly convex, and coercive function $\Psi:[H]^M\rightarrow(-\infty,\infty]$ defined as follows:
    \begin{align*}
      \Psi:\bm{z}\in[H]^M\mapsto \Psi(\bm{z}):=\left\{
        \begin{aligned}
          &\frac{1}{2\tau}\int_\Omega |\sqrt{A(\bm{u}^\dagger)}(\bm{z}-\bm{u}^\dagger)|^2\,dx+\frac{\mu}{2\tau}\int_\Omega|\nabla(\bm{z}-\bm{u}^\dagger)|^2\,dx
          \\
          &\quad+\int_\Omega\alpha(\bm{\overline{u}})\gamma(B(\bm{\overline{u}})\nabla\bm{z})\,dx+\int\nabla_{\bm{u}} G(x,\bm{\overline{u}})\cdot \bm{z}\,dx
          \\
          &\quad+\nu\int_\Omega\Upsilon_p(\nabla\bm{z})\,dx+\int_\Omega\big([\nabla\alpha](\bm{u}^\dagger)\gamma({B}(\bm{u}^\dagger)\nabla\bm{u}^\dagger)\big)\cdot\bm{z}\,dx
          \\
          &\quad+\int_\Omega\big(\alpha(\bm{u}^\dagger)\nabla\gamma({B}(\bm{u}^\dagger)\nabla\bm{u}^\dagger):[\nabla{B}](\bm{u}^\dagger)\nabla\bm{u}^\dagger\big)\cdot\bm{z}\,dx
        \end{aligned}
      \right.
    \end{align*} 
    Since $\Psi$ is strictly convex, the minimizer of $\Psi$ is unique, and it directly leads to the existence and uniqueness of solution $\bm{u}\in [W^{1,p}(\Omega)]^M$ to problem (E1)$_{\overline{\bm{u}}}$.

    Let $\bm{u}\in \argmin_{\bm{z}\in[H]^M}\Psi(\bm{z})$. Then, noting $\Psi(\bm{u})\leq \Psi(\bm{u}^\dagger)$, we can derive:
  \begin{align*}
    &\frac{C_A\land \mu}{2\tau}|\bm{u}-\bm{u}^\dagger|^2_{[V]^M}+\nu\int_\Omega \Upsilon_p(\nabla \bm{u})\,dx
    \\
    &\leq \Psi(\bm{u}^\dagger)-\int \nabla_{\bm{u}} G(x,\bm{\overline{u}})\cdot \bm{z}\,dx-\int_\Omega\big([\nabla\alpha](\bm{u}^\dagger)\gamma(\nabla\bm{u}^\dagger{B}_0(\bm{u}^\dagger))\big)\cdot\bm{u}\,dx
    \\
    &\quad-\int_\Omega\big(\alpha(\bm{u}^\dagger)\nabla\gamma(\nabla\bm{u}^\dagger{B}_0(\bm{u}^\dagger)):\nabla\bm{u}^\dagger[\nabla{B}_0](\bm{u}^\dagger)\big)\cdot\bm{u}\,dx
    \\
    &\leq \nu \int_\Omega\Upsilon_p(\nabla \bm{u}^\dagger)\,dx +\int_\Omega\alpha(\bm{\overline{u}})\gamma(\nabla\bm{u}^\dagger{B}_0(\bm{\overline{u}}))\,dx
    \\
    &\quad +\int \nabla_{\bm{u}} G(x,\bm{\overline{u}})\cdot (\bm{u}^\dagger-\bm{u})\,dx+\int_\Omega\big([\nabla\alpha](\bm{u}^\dagger)\gamma(\nabla\bm{u}^\dagger{B}_0(\bm{u}^\dagger))\big)\cdot(\bm{u}^\dagger-\bm{u})\,dx
    \\
    &\quad+\int_\Omega\big(\alpha(\bm{u}^\dagger)\nabla\gamma(\nabla\bm{u}^\dagger{B}_0(\bm{u}^\dagger)):\nabla\bm{u}^\dagger[\nabla{B}_0](\bm{u}^\dagger)\big)\cdot(\bm{u}^\dagger-\bm{u})\,dx
    \\
    &\leq C_\Upsilon\int_\Omega(|\nabla\bm{u}^\dagger|^p+1)\,dx+\|\alpha\|_{L^\infty} C_\gamma\int_\Omega(|\nabla\bm{u}^\dagger B_0(\overline{\bm{u}})|+1)\,dx
    \\
    &\quad +\frac{C_A\land \mu}{8\tau}|\bm{u}-\bm{u}^\dagger|^2_{[H]^M}+\frac{2\tau}{C_A\land \mu}L_G|\Omega|+\frac{C_A\land \mu}{8\tau}|\bm{u}-\bm{u}^\dagger|^2_{[H]^M}
    \\
    &\quad +\frac{2\tau}{C_A\land \mu}\|\nabla\alpha\|^2_{L^\infty} C_\gamma^2\int_\Omega(|\nabla\bm{u}^\dagger B_0(\overline{\bm{u}})|+1)^2\,dx+\frac{C_A\land \mu}{8\tau}|\bm{u}-\bm{u}^\dagger|^2_{[H]^M}
    \\
    &\quad +\frac{2\tau}{C_A\land \mu}\|\alpha\|_{L^\infty}\|\nabla\gamma\|_{L^\infty}\|\nabla B_0\|_{L^\infty}|\nabla\bm{u}^\dagger|^2_{[H]^{M\times N}}
    \\
    &\leq C_\Upsilon\int_\Omega(|\nabla\bm{u}^\dagger|^p+1)\,dx +C_\gamma\|\alpha\|_{W^{2,\infty}}(\|B_0\|_{W^{2,\infty}}+1)\int_\Omega(|\nabla\bm{u}^\dagger|+1)\,dx
    \\
    &\quad +\frac{3(C_A\land \mu)}{8\tau}|\bm{u}-\bm{u}^\dagger|^2_{[V]^M}+\frac{2\tau}{C_A\land \mu}L_G|\Omega|
    \\
    &\quad+\frac{2\tau}{C_A\land \mu}\|\alpha\|_{W^{2,\infty}} C_\gamma^2(\|B_0\|_{W^{2,\infty}}+1)^2\int_\Omega(|\nabla\bm{u}^\dagger|+1)^2\,dx
    \\
    &\quad +\frac{2\tau}{C_A\land \mu}\|\alpha\|_{W^{2,\infty}}^2\|\nabla\gamma\|_{L^\infty}^2\|B_0\|_{W^{2,\infty}}^2|\nabla\bm{u}^\dagger|^2_{[H]^{M\times N}},
  \end{align*}
  and therefore, 
  \begin{align}
    &\frac{\nu}{C_\Upsilon}\int_\Omega(|\nabla\bm{u}|^p-1)\,dx
    \\
    &\quad\leq \left((C_\Upsilon+|\Omega|) +C_\gamma\|\alpha\|_{W^{2,\infty}}(\|B_0\|_{W^{2,\infty}}+1)\right)(|\nabla\bm{u}|_{[L^p(\Omega)]^{M\times N}}+1)^p
    \\
    &\quad +\frac{2\tau}{C_A \land \mu}(|\nabla\bm{u}|_{[L^p(\Omega)]^{M\times N}}+1)^p\left(L_G|\Omega|+\|\alpha\|_{W^{2,\infty}}^2\|\nabla\gamma\|_{L^\infty}^2\|B_0\|_{W^{2,\infty}}^2\right.
    \\
    &\qquad \left. +\|\alpha\|_{W^{2,\infty}} C_\gamma^2(\|B_0\|_{W^{2,\infty}}+1)^2 \right).\label{dm01}
  \end{align}
  Now, we take a small constant $\tau_1\in(0,1)$ such that:
  \begin{align*}
    \tau_1:=\min\left\{1,\frac{C_A\land \mu}{2L_G|\Omega|+\|\alpha\|_{W^{2,\infty}}^2\|\nabla\gamma\|_{L^\infty}^2\|B_0\|_{W^{2,\infty}}^2+\|\alpha\|_{W^{2,\infty}} C_\gamma^2(\|B_0\|_{W^{2,\infty}}+1)^2}\right\}
  \end{align*}
  so that 
  \begin{align*}
    &\tau \cdot \frac{2}{C_A}\left(L_G|\Omega|+\|\alpha\|_{W^{2,\infty}}^2\|\nabla\gamma\|_{L^\infty}^2\|B_0\|_{W^{2,\infty}}^2+\|\alpha\|_{W^{2,\infty}} C_\gamma^2(\|B_0\|_{W^{2,\infty}}+1)^2\right)
    \\
    &\qquad \leq1, \mbox{ for any }\tau \in (0,\tau_1).
  \end{align*}
  Then, \eqref{dm01} can be reduced to:
  \begin{align*}
    \frac{\nu}{C_\Upsilon}\int_\Omega(|\nabla\bm{u}|^p-1)\,dx&\leq \left(C_\Upsilon +C_\gamma\|\alpha\|_{W^{2,\infty}}(\|B_0\|_{W^{2,\infty}}+1)+1\right)\cdot 
    \\
    &\qquad \cdot (|\nabla\bm{u}|_{[L^p(\Omega)]^{M\times N}}+1)^p
    \\
    &\leq 2^{p-1}\left(C_\Upsilon +C_\gamma\|\alpha\|_{W^{2,\infty}}(\|B_0\|_{W^{2,\infty}}+1)+1\right)\cdot 
    \\
    &\qquad \cdot (|\nabla\bm{u}|_{[L^p(\Omega)]^{M\times N}}^p+1).
  \end{align*}

  Thus, Lemma \ref{lem001u} is conclude with the constant:
  \begin{align*}
    C_0:=2^{p-1}(C_\Upsilon+|\Omega|)\left(C_\Upsilon +C_\gamma\|\alpha\|_{W^{2,\infty}}(\|B_0\|_{W^{2,\infty}}+1)+2\right).
  \end{align*}
  \end{proof}
\end{lemma}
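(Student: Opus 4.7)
\textbf{Proof plan for Lemma \ref{lem001u}.} The plan is to reformulate (E1)$_{\overline{\bm{u}}}$ as the Euler--Lagrange equation of a strictly convex minimization problem, obtain existence and uniqueness by the direct method of the calculus of variations, and then extract the $W^{1,p}$-estimate by comparing the value of the functional at the minimizer with its value at $\bm{u}^\dagger$. Concretely, I would introduce the functional $\Psi:[W^{1,p}(\Omega)]^M \to \R$ defined by
\begin{align*}
\Psi(\bm{z}) := {} & \frac{1}{2\tau}\int_\Omega |\sqrt{A(\bm{u}^\dagger)}(\bm{z}-\bm{u}^\dagger)|^2\,dx + \frac{\mu}{2\tau}\int_\Omega |\nabla(\bm{z}-\bm{u}^\dagger)|^2\,dx \\
& + \int_\Omega \alpha(\overline{\bm{u}})\gamma(B(\overline{\bm{u}})\nabla\bm{z})\,dx + \nu\int_\Omega \Upsilon_p(\nabla\bm{z})\,dx + \ell(\bm{z}),
\end{align*}
where $\ell$ collects the affine-in-$\bm{z}$ terms involving $\nabla_{\bm{u}}G(x,\overline{\bm{u}})$ and the two ``frozen'' tensors built from $\bm{u}^\dagger$. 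By (A1)--(A6), $\Psi$ is proper, weakly lower semicontinuous, strictly convex (thanks to the quadratic head terms), and coercive on $[W^{1,p}(\Omega)]^M$ via the lower $p$-growth of $\Upsilon_p$ from (A5) combined with the $V$-coercivity from (A4). The direct method then yields a unique minimizer $\bm{u}$, whose Euler--Lagrange identity is exactly the required variational formulation.

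For the a priori estimate I would exploit minimality, $\Psi(\bm{u}) \leq \Psi(\bm{u}^\dagger)$. After cancellation the left-hand side retains $\frac{C_A \land \mu}{2\tau}|\bm{u}-\bm{u}^\dagger|^2_{[V]^M}$ together with $\frac{\nu}{C_\Upsilon}\int_\Omega(|\nabla\bm{u}|^p - 1)\,dx$ (via the lower bound in (A5)), while the right-hand side reduces to quantities evaluated at $\bm{u}^\dagger$ and $\overline{\bm{u}}$ plus linear contributions in $\bm{u}-\bm{u}^\dagger$. Cauchy--Schwarz followed by Young's inequality absorbs a fixed fraction of the coercive $V$-term on the left, at the expense of factors of $\tau$ multiplying expressions bounded by $\|\alpha\|_{W^{1,\infty}}$, $\|B_0\|_{W^{1,\infty}}$, $\|\nabla\gamma\|_{L^\infty}$, $L_G$, and $|\nabla\bm{u}^\dagger|^2_{[H]^{M\times N}}$. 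The linear growth of $\gamma$ from (A6) combined with H\"older's inequality on the bounded domain $\Omega$ then collapses every data quantity into $1+|\nabla\bm{u}^\dagger|_{L^p}^p$.

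Finally, I would select $\tau_1\in(0,1)$ so small that the combined coefficient of the $\tau$-scaled data cluster does not exceed $1$, which eliminates the $\tau$-dependence from the right-hand side and delivers the claimed bound with $C_0$ depending only on the structural constants. The main subtlety I anticipate is the bookkeeping needed to verify that every coefficient field evaluated at $\overline{\bm{u}}$ or $\bm{u}^\dagger$ can be dominated by an $L^\infty$ norm independent of those arguments (supplied by (A1), (A3), and (A4)); this $\overline{\bm{u}}$-independence of $C_0$ is crucial, since Lemma \ref{lem001u} will be iterated in $\overline{\bm{u}}$ in a fixed-point scheme that produces the full time-discrete solution of Theorem \ref{003Thm1}.
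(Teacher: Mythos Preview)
Your proposal is correct and follows essentially the same route as the paper: both introduce the same strictly convex functional $\Psi$, obtain the unique minimizer as the variational solution, and then exploit $\Psi(\bm{u})\le\Psi(\bm{u}^\dagger)$ together with Young's inequality and a smallness condition on $\tau$ to derive the $\overline{\bm{u}}$-independent $L^p$ gradient bound. The only cosmetic difference is that the paper sets $\Psi$ up on $[H]^M$ (with value $+\infty$ off the effective domain) rather than directly on $[W^{1,p}(\Omega)]^M$, which does not affect the argument.
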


\begin{rem}
  In what follows, the estimate \eqref{lem0010} is used in the form of 
  \begin{align}\label{nu001}
    |\nabla\bm{u}|^2_{[L^p(\Omega)]^{M\times N}}\leq \frac{\widetilde{C}_0}{\nu^\frac{4}{p}}(1+E_\nu(\bm{u}^\dagger)),
  \end{align}
  by setting $\widetilde{C}_0:=C_0(1+|\Omega|+C_\Upsilon)$.
\end{rem}

  We next consider the following elliptic problem $\mathrm{(E2)}$:
  \begin{align*}
  &\frac{A(\bm{u}^\dagger)}{\tau}(\bm{u}-\bm{u}^\dagger)-\mathrm{div}\Big(\alpha({\bm{u}}) B^*({\bm{u}}) \nabla\gamma({B}({\bm{u}})\nabla\bm{u})+\nu \nabla\Upsilon_p(\nabla \bm{u})
  \\
  &\qquad+\frac{\mu}{\tau}\nabla(\bm{u}-\bm{u}^\dagger)\Big)+\nabla_{\bm{u}} G(x,{\bm{u}})+[\nabla\alpha](\bm{u}^\dagger)\gamma({B}(\bm{u}^\dagger)\nabla\bm{u}^\dagger)
  \\
  &\qquad+\alpha(\bm{u}^\dagger)\nabla\gamma({B}(\bm{u}^\dagger)\nabla\bm{u}^\dagger):[\nabla{B}](\bm{u}^\dagger)\nabla\bm{u}^\dagger=\bm{0}~\mathrm{in}~[H]^M.
  \end{align*}

  \begin{lemma}\label{lem001uka}
    Let $\tau_1\in(0,1)$ be as in Lemma \ref{lem001u}. Then, there exists a small time-step $\tau_2 \in (0,\tau_1)$ such that (E2) admits a unique solution $\bm{u}\in[W^{1,p}(\Omega)]^M$ in the following variational sense:
    \begin{align}
    &\frac{1}{\tau}(A(\bm{u}^\dagger)(\bm{u}-\bm{u}^\dagger),\bm{\psi})_{[H]^M}+\frac{\mu}{\tau}(\nabla(\bm{u}-\bm{u}^\dagger),\nabla\bm{\psi})_{[H]^{M\times N}}
    \\
    &\quad+(\alpha({\bm{u}}) B^*({\bm{u}}) \nabla\gamma({B}({\bm{u}})\nabla\bm{u}),\nabla\bm{\psi})_{[H]^{M\times N}}
    +(\nabla_{\bm{u}} G(x,{\bm{u}}),\bm{\psi})_{[H]^M}
    \\
    &\quad +\nu\int_\Omega \nabla \Upsilon_p(\nabla \bm{u}):\nabla\bm{\psi}\,dx
    +([\nabla\alpha](\bm{u}^\dagger)\gamma({B}(\bm{u}^\dagger)\nabla\bm{u}^\dagger),\bm{\psi})_{[H]^M} \label{dm03}
    \\
    &\quad+(\alpha(\bm{u}^\dagger)\nabla\gamma({B}(\bm{u}^\dagger)\nabla\bm{u}^\dagger):[\nabla{B}](\bm{u}^\dagger)\nabla\bm{u}^\dagger,\bm{\psi})_{[H]^M}=0,
    \mbox{ for  any }\bm{\psi}\in [W^{1,p}(\Omega)]^M.
  \end{align}
  \begin{proof}
  We define an operator $S_\tau:[V]^M\rightarrow[V]^M$ as the mapping that assigns to each $\bm{\overline{u}}\in[V]^M$ the unique solution of the problem (E2). To ensure that $S_\tau$ is contractive, we investigate the condition under which the parameter $\tau$ is sufficiently small. For the simple computation, we take a small time-step constant $\tau_1^*\in(0,\tau_1)$ satisfying:
  \begin{align}\label{dm02}
    \tau_1^*:=\min\left\{\tau_1,\frac{C_A\land \mu}{L_G+\|\alpha\|^2_{W^{1,\infty}}\|B_0\|_{W^{2,\infty}}^2(\|\nabla\gamma\|_{L^\infty}^2+1)}\right\}
  \end{align}
  so that 
  \begin{align*}
    \frac{\tau}{C_A\land\mu}(L_G+\|\alpha\|^2_{W^{1,\infty}}\|B\|_{W^{2,\infty}}^2\|\nabla\gamma\|_{L^\infty}^2)\leq 1, \mbox{ for any }\tau \in (0,\tau_1^*).
  \end{align*}
  Let $\bm{u}_k:=S_\tau\bm{\overline{u}}_k$, $k=1,2$. By taking difference of (E2), multiplying both sides by $(\bm{u}_1-\bm{u}_2)$, and summing the two equations, we deduce from (A1)--(A5) that 
  \begin{align}
    &\frac{C_A\land\mu}{\tau}|\bm{u}_1-\bm{u}_2|^2_{[V]^M}
    \\
    &\leq L_G\int_\Omega|\bm{\overline{u}}_1-\bm{\overline{u}}_2||\bm{u}_1-\bm{u}_2|\,dx
    \\
    &\quad +\|\nabla\alpha\|_{L^\infty}\|B_0\|_{L^\infty}\|\nabla\gamma\|_{L^\infty}\int_\Omega|\bm{\overline{u}}_1-\bm{\overline{u}_2}||\nabla(\bm{u}_1-\bm{u}_2)|\,dx
    \\
    &\quad +\|\alpha\|_{L^\infty}\|\nabla B_0\|_{L^\infty}\|\nabla\gamma\|_{L^\infty}\int_\Omega|\bm{\overline{u}}_1-\bm{\overline{u}_2}||\nabla(\bm{u}_1-\bm{u}_2)|\,dx
    \\
    &\quad +\|\alpha\|_{L^\infty}\|B_0\|_{W^{1,\infty}}\|\nabla^2\gamma\|_{L^\infty}\int_\Omega|\bm{\overline{u}}_1-\bm{\overline{u}_2}||\nabla \bm{u}_1||\nabla(\bm{u}_1-\bm{u}_2)|\,dx
    \\
    &\quad -\int_\Omega \alpha(\overline{\bm{u}}_2)\bigl([\nabla\gamma](\nabla\bm{u}_1B_0(\overline{\bm{u}}_1))-[\nabla\gamma](\nabla\bm{u}_2B_0(\overline{\bm{u}}_1))\bigr): \nabla(\bm{u}_1-\bm{u}_2)B_0(\overline{\bm{u}}_1)\,dx
    \\
    &\leq \frac{C_A\land\mu}{2\tau}|\bm{u}_1-\bm{u}_2|^2_{[V]^M}
    \\
    &\quad \frac{2\tau}{C_A\land\mu}(L_G+\|\alpha\|^2_{W^{1,\infty}}\|B_0\|_{W^{1,\infty}}^2\|\nabla\gamma\|_{L^\infty}^2)|\overline{\bm{u}}_1-\overline{\bm{u}}_2|^2_{[H]^M}
    \\
    &\quad +\frac{2\tau}{C_A\land\mu}\|\alpha\|^2_{W^{1,\infty}}\|B_0\|_{W^{1,\infty}}^2\|\nabla^2\gamma\|_{L^\infty}^2\int_\Omega |\bm{\overline{u}}_1-\bm{\overline{u}}_2|^2|\nabla \bm{u}_1|^2\,dx.\label{lem0013}
  \end{align}
  Having \eqref{dm02} in mind, we can reduce \eqref{lem0013} by:
  \begin{gather}
    \frac{C_A\land\mu}{2\tau}|\bm{u}_1-\bm{u}_2|^2_{[V]^M}\leq 2|\overline{\bm{u}}_1-\overline{\bm{u}}_2|^2_{[H]^M}+2|\nabla^2\gamma|^2_{L^\infty}\int_\Omega |\bm{\overline{u}}_1-\bm{\overline{u}}_2|^2|\nabla \bm{u}_1|^2\,dx,
    \\
    \mbox{ for any }\tau \in (0,\tau_1^*).\label{lem0014}
  \end{gather}
The estimate of the final term of \eqref{lem0014} will depend on the spatial dimension. Indeed, when $N=1,2$, the continuous embedding from $V$ to $L^\frac{2p}{p-2}(\Omega)$ is valid, and hence, on account of \eqref{nu001}, it is estimated by:
\begin{align}
  &2|\nabla^2\gamma|^2_{L^\infty}\int_\Omega |\bm{\overline{u}}_1-\bm{\overline{u}}_2|^2|\nabla \bm{u}_1|^2\,dx\leq2|\nabla^2\gamma|^2_{L^\infty} |\bm{\overline{u}}_1-\bm{\overline{u}}_2|^2_{[L^\frac{2p}{p-2}(\Omega)]^M}|\nabla \bm{u}_1|^2_{[L^p(\Omega)]^{M\times N}}
  \\
  &\quad\leq \frac{1}{\nu^\frac{4}{p}}\cdot2\widetilde{C}_0\left(C_V^{L^\frac{2p}{p-2}}\right)^2|\nabla^2\gamma|^2_{L^\infty}(1+E_\nu(\bm{u}^\dagger))|\bm{\overline{u}}_1-\bm{\overline{u}}_2|^2_{[V]^M}.\label{lem0015}
\end{align}
On the other hand, if $N\geq3$, then by using the continuous embedding from $V$ to $L^\frac{2N}{N-2}$ and by recalling the assumption $N\leq p$, we can compute:
\begin{align}
  &2|\nabla^2\gamma|^2_{L^\infty}\int_\Omega |\bm{\overline{u}}_1-\bm{\overline{u}}_2|^2|\nabla \bm{u}_1|^2\,dx\leq2|\nabla^2\gamma|^2_{L^\infty} |\bm{\overline{u}}_1-\bm{\overline{u}}_2|^2_{[L^\frac{2N}{N-2}(\Omega)]^M}|\nabla \bm{u}_1|^2_{[L^N(\Omega)]^{M\times N}}
  \\
  &\leq 2\widetilde{C}_0\left(C_V^{L^\frac{2N}{N-2}}\right)^2|\nabla^2\gamma|^2_{L^\infty}|\bm{\overline{u}}_1-\bm{\overline{u}}_2|^2_{[V]^M}\cdot|\Omega|^\frac{2(p-N)}{Np}|\nabla \bm{u}_1|^2_{[L^p(\Omega)]^{M\times N}}
  \\
  &\leq \frac{1}{\nu^\frac{4}{p}}\cdot2\widetilde{C}_0\left(C_V^{L^\frac{2N}{N-2}}\right)^2|\Omega|^\frac{2(p-N)}{Np}|\nabla^2\gamma|^2_{L^\infty}(1+E_\nu(\bm{u}^\dagger))|\bm{\overline{u}}_1-\bm{\overline{u}}_2|^2_{[V]^M}.\label{lem0016}
\end{align}\noeqref{lem0015}
Therefore, according to \eqref{lem0014}--\eqref{lem0016}, we arrive at the smallness condition of $\tau$, with the constant $\tau_2:=\tau_2(\bm{u}^\dagger,\nu,|\nabla^2\gamma|_{L^\infty})$, given by:
\begin{align*}
  \tau_2:=\min\left\{ \tau_1^*,\frac{\nu^\frac{4}{p}(C_A\land \mu)}{8\widetilde{C}_0(1+E_\nu(\bm{u}^\dagger))(1+|\nabla^2\gamma|^2_{L^\infty})\Bigl(\Bigl(C_V^{L^\frac{2p}{p-2}}\Bigr)^2+\Bigl(C_V^{L^\frac{2N}{N-2}}\Bigr)^2|\Omega|^\frac{2(p-N)}{Np}+1  \Bigr)} \right\}
\end{align*}
so that 
\begin{align*}
  &\left\{
    \begin{aligned}
      &\frac{\tau}{\nu^\frac{4}{p}}(1+E_\nu(\bm{u}^\dagger))(1+|\nabla^2\gamma|^2_{L^\infty})\cdot\frac{4\Bigl(\widetilde{C}_0\Bigl(C_V^{L^\frac{2p}{p-2}}\Bigr)^2+1\Bigr)}{C_A\land\mu}\leq\frac{1}{2},
      \\
      &\frac{\tau}{\nu^\frac{4}{p}}(1+E_\nu(\bm{u}^\dagger))(1+|\nabla^2\gamma|^2_{L^\infty})\cdot\frac{4\Bigl(\widetilde{C}_0\Bigl(C_V^{L^\frac{2N}{N-2}}|\Omega|^\frac{2(p-N)}{Np}\Bigr)^2+1\Bigr)}{C_A\land\mu}\leq\frac{1}{2},
    \end{aligned}
  \right.
  \\
  &\hspace{25ex} \mbox{ for any }\tau \in (0,\tau_2).
\end{align*}
Under the smallness condition $\tau\in(0,\tau_2)$, $S_\tau$ is contractive, and thus, we conclude Lemma \ref{lem001uka}.
\end{proof}
\end{lemma}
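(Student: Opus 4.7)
The plan is to realize the solution of (E2) as a fixed point of the mapping $S_\tau : [V]^M \to [V]^M$ (or, via the a priori bound of Lemma \ref{lem001u}, ranging in $[W^{1,p}(\Omega)]^M$) that sends each $\overline{\bm{u}}$ to the unique solution of the frozen-coefficient problem (E1)$_{\overline{\bm{u}}}$. Existence of such a $\bm{u}$, together with the uniform $W^{1,p}$-bound \eqref{nu001}, is already furnished by Lemma \ref{lem001u}, so the entire content of Lemma \ref{lem001uka} reduces to verifying that $S_\tau$ is a strict contraction for all sufficiently small $\tau$; Banach's fixed-point theorem then yields simultaneously existence and uniqueness of a solution to the fully nonlinear problem (E2).

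To set up the contraction estimate I would pick $\overline{\bm{u}}_1, \overline{\bm{u}}_2 \in [V]^M$, write $\bm{u}_k := S_\tau \overline{\bm{u}}_k$, subtract the variational identities for $\bm{u}_1$ and $\bm{u}_2$, and test against $\bm{u}_1 - \bm{u}_2$. Assumption (A4) and the $\mu$-term give the coercive lower bound $\frac{C_A \wedge \mu}{\tau}|\bm{u}_1 -\bm{u}_2|_{[V]^M}^2$, while monotonicity of $\nabla \Upsilon_p$ from (A5) supplies a non-negative contribution that can be discarded. On the right-hand side, differences arising from $\alpha(\overline{\bm{u}}_k)$, $B^*(\overline{\bm{u}}_k)$, the outer $B(\overline{\bm{u}}_k)$ and $\nabla_{\bm{u}} G(x,\overline{\bm{u}}_k)$ are all controlled by the Lipschitz properties in (A1)--(A4), (A6), producing terms of order $C|\overline{\bm{u}}_1 -\overline{\bm{u}}_2|_{[H]^M}|\bm{u}_1 -\bm{u}_2|_{[V]^M}$, which Young's inequality absorbs into the coercive side at the cost of a factor $\tau$.

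The genuine obstacle is the discrepancy inside $\nabla\gamma(B(\overline{\bm{u}}_k)\nabla \bm{u}_k)$. After adding and subtracting $\nabla\gamma(B(\overline{\bm{u}}_1)\nabla \bm{u}_2)$, the piece that monotonicity of $\nabla\gamma$ cannot kill takes the trilinear form $\int_\Omega |\overline{\bm{u}}_1 -\overline{\bm{u}}_2|\,|\nabla \bm{u}_1|\,|\nabla(\bm{u}_1 -\bm{u}_2)|\,dx$, where the $C^{1,1}$-regularity of $\gamma$ assumed in this section enters through $\|\nabla^2\gamma\|_{L^\infty}$. The factor $|\nabla \bm{u}_1|$ is only $L^p$-integrable with norm controlled by \eqref{nu001}, i.e.\ by $\widetilde{C}_0 \nu^{-4/p}(1+E_\nu(\bm{u}^\dagger))$, so Hölder's inequality must be coupled with Sobolev embedding. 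For $N \in \{1,2\}$ the embedding $V \hookrightarrow L^{2p/(p-2)}$ is admissible for any $p>2$, while for $N \geq 3$ I would combine $V \hookrightarrow L^{2N/(N-2)}$ with the inclusion $L^p(\Omega) \hookrightarrow L^N(\Omega)$ guaranteed by the standing hypothesis $p \geq N$ of (A0), producing an $|\Omega|^{2(p-N)/(Np)}$ factor. Either way, the trilinear term is bounded by $C \nu^{-4/p}(1+E_\nu(\bm{u}^\dagger)) |\overline{\bm{u}}_1 -\overline{\bm{u}}_2|_{[V]^M}^2$.

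Collecting all of the above against the coercive $\frac{C_A \wedge \mu}{\tau}|\bm{u}_1 -\bm{u}_2|_{[V]^M}^2$, the contraction constant for $S_\tau$ takes the form $C \tau \nu^{-4/p}(1+E_\nu(\bm{u}^\dagger))(1+\|\nabla^2\gamma\|_{L^\infty}^2)$. Choosing $\tau_2 \in (0,\tau_1)$ so small that this constant is at most $\tfrac{1}{2}$ completes the verification of the contraction property, and Banach's fixed-point theorem delivers the unique $\bm{u} \in [W^{1,p}(\Omega)]^M$ solving the stated variational problem. The delicate step on which the whole argument hinges is the Sobolev-embedding splitting of the trilinear term, since this is the only place where the hypotheses $\gamma \in C^{1,1}$, $p \geq N$, and the frozen-coefficient bound \eqref{nu001} simultaneously come into play.
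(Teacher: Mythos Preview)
Your proposal is correct and follows essentially the same route as the paper: define $S_\tau$ via the frozen-coefficient problem (E1)$_{\overline{\bm{u}}}$, test the difference of two variational identities against $\bm{u}_1-\bm{u}_2$, exploit the coercivity from $A$ and $\mu$ together with the monotonicity of $\nabla\Upsilon_p$ and $\nabla\gamma$, control the Lipschitz cross terms by Young's inequality, and handle the critical trilinear term through the dimension-split Sobolev embeddings combined with the $L^p$-bound \eqref{nu001}. The resulting contraction constant and the form of $\tau_2$ you obtain match the paper's, so Banach's fixed-point theorem concludes the argument exactly as stated.
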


Next, for the guarantee of the energy-inequality as in \eqref{f-ene0}, we handle fundamental computations.
\begin{lemma} \label{lem001dm}
  Let $\tau_2 \in (0,1)$ be as in Lemma \ref{lem002}. Let $\bm{u} \in [W^{1,p}(\Omega)]^M$ be the unique solution to (E2) for an arbitrary $\bm{u}^\dagger \in [W^{1,p}(\Omega)]^M$. Then, there exists a small time-step constant $\tau_3 = \tau_3(\bm{u}^\dagger, \nu, \|\nabla^2\gamma\|_{L^\infty}) \in (0,\tau_2)$, depending on $|\nabla \bm{u}^\dagger|_{[L^p(\Omega)]^{M \times N}}$, $\nu \in (0,1)$ and $\| \nabla^2 \gamma \|_{L^\infty}$, such that for any $\tau \in (0,\tau_3)$, the following energy-inequality at each step holds:
  \begin{gather}
    \frac{C_A}{4\tau} |\bm{u} - \bm{u}^\dagger|_{[H]^M}^2 + \frac{\mu}{2 \tau} |\nabla (\bm{u} - \bm{u}^\dagger)|_{[H]^{M \times N}}^2 + E_\nu(\bm{u}) \leq E_\nu(\bm{u}^\dagger). \label{dm04}
  \end{gather}
\end{lemma}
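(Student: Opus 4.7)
The plan is to test the variational identity \eqref{dm03} defining (E2) against $\bm{\psi} = \bm{u} - \bm{u}^\dagger \in [W^{1,p}(\Omega)]^M$, and to reinterpret every term on the left as an infinitesimal variation of one of the three pieces of $E_\nu$. The quadratic part is immediate: the first two terms produce $\frac{1}{\tau}\int_\Omega{}^\top(\bm{u}-\bm{u}^\dagger)A(\bm{u}^\dagger)(\bm{u}-\bm{u}^\dagger)\,dx + \frac{\mu}{\tau}|\nabla(\bm{u}-\bm{u}^\dagger)|^2_{[H]^{M\times N}}$, which the coercivity in (A4) bounds below by $\frac{C_A}{\tau}|\bm{u}-\bm{u}^\dagger|^2_{[H]^M}+\frac{\mu}{\tau}|\nabla(\bm{u}-\bm{u}^\dagger)|^2_{[H]^{M\times N}}$.

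For the convex pieces, I would use the adjoint identity $B^*(\bm{u})Z:W = Z:B(\bm{u})W$ to rewrite the $\gamma$-term as $\int_\Omega \alpha(\bm{u})\nabla\gamma(B(\bm{u})\nabla\bm{u}):B(\bm{u})(\nabla\bm{u}-\nabla\bm{u}^\dagger)\,dx$; since $\alpha\geq 0$ by (A1), the subgradient inequality for the convex function $\gamma$ gives the lower bound $\int_\Omega \alpha(\bm{u})[\gamma(B(\bm{u})\nabla\bm{u})-\gamma(B(\bm{u})\nabla\bm{u}^\dagger)]\,dx$. The convexity of $\Upsilon_p$ treats the $\nu$-term in the same way, and for the $G$-term the Lipschitz continuity of $\nabla_{\bm{u}}G$ from (A2)$'$ yields, via a pointwise Taylor expansion, $\int_\Omega[G(x,\bm{u})-G(x,\bm{u}^\dagger)]\,dx$ up to a remainder controlled by $\tfrac{3L_G}{2}|\bm{u}-\bm{u}^\dagger|^2_{[H]^M}$.

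The central observation is that the two remaining $\bm{u}^\dagger$-frozen terms, $[\nabla\alpha](\bm{u}^\dagger)\gamma(B(\bm{u}^\dagger)\nabla\bm{u}^\dagger)$ and $\alpha(\bm{u}^\dagger)\nabla\gamma(B(\bm{u}^\dagger)\nabla\bm{u}^\dagger):[\nabla B](\bm{u}^\dagger)\nabla\bm{u}^\dagger$, are exactly $\nabla_{\bm{v}}F(\bm{u}^\dagger,x)$ for the auxiliary field
\begin{equation*}
    F(\bm{v},x):=\alpha(\bm{v})\,\gamma\bigl(\nabla\bm{u}^\dagger(x)\,B_0(\bm{v})\bigr),\quad \bm{v}\in\R^M,\ x\in\Omega.
\end{equation*}
Under the present assumptions ($\alpha,B_0\in C^2$ and $\gamma\in C^{1,1}\cap C^2$), the function $F(\cdot,x)$ is $C^2$ with $|\nabla^2_{\bm{v}}F(\bm{v},x)|\leq C(1+|\nabla\bm{u}^\dagger(x)|^2)$ uniformly in $\bm{v}$. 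A pointwise second-order Taylor expansion of $F(\cdot,x)$ at $\bm{u}^\dagger(x)$ therefore yields
\begin{equation*}
  \int_\Omega\nabla_{\bm{v}}F(\bm{u}^\dagger)\cdot(\bm{u}-\bm{u}^\dagger)\,dx
  \geq \int_\Omega\alpha(\bm{u})\gamma(B(\bm{u})\nabla\bm{u}^\dagger)\,dx
  -\int_\Omega\alpha(\bm{u}^\dagger)\gamma(B(\bm{u}^\dagger)\nabla\bm{u}^\dagger)\,dx -|R|,
\end{equation*}
with $|R|\leq C\int_\Omega(1+|\nabla\bm{u}^\dagger|^2)|\bm{u}-\bm{u}^\dagger|^2\,dx$. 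Adding this to the bound from the previous paragraph causes the intermediate quantity $\int_\Omega\alpha(\bm{u})\gamma(B(\bm{u})\nabla\bm{u}^\dagger)\,dx$ to telescope, which is how the full $E_\nu(\bm{u})-E_\nu(\bm{u}^\dagger)$ appears.

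The final step is to absorb $|R|$ and the $G$-remainder into the coercive left-hand side. I would estimate $\int_\Omega|\nabla\bm{u}^\dagger|^2|\bm{u}-\bm{u}^\dagger|^2\,dx$ by H\"older's inequality with exponents $p/2$ and $p/(p-2)$, and then apply the Sobolev embedding $V\hookrightarrow[L^{2p/(p-2)}(\Omega)]^M$ (valid because $p\geq N$), splitting into the two cases $N\leq 2$ and $N\geq 3$ exactly as in the proof of Lemma~\ref{lem001uka}. The resulting bound has the form $C'(1+|\nabla\bm{u}^\dagger|^2_{[L^p(\Omega)]^{M\times N}})\,|\bm{u}-\bm{u}^\dagger|^2_{[V]^M}$, and after combining with the $G$-remainder it is absorbed into the coercive terms provided $\tau < \tau_3$ for a threshold depending on $|\nabla\bm{u}^\dagger|_{[L^p(\Omega)]^{M\times N}}$, on $\nu$ (through $\tau_2$ and $E_\nu(\bm{u}^\dagger)$), and on $\|\nabla^2\gamma\|_{L^\infty}$. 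The main obstacle I anticipate is the bookkeeping in Step~3: correctly recognizing the two frozen terms as $\nabla_{\bm{v}}F(\bm{u}^\dagger)$ and controlling the quadratic Taylor remainder, which is precisely where the second-derivative assumption on $\gamma$ and the exponent condition $p\geq N$ are indispensable.
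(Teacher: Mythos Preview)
Your proposal is correct and follows essentially the same route as the paper: the paper also tests \eqref{dm03} against $\bm{u}-\bm{u}^\dagger$, uses convexity of $\gamma$ and $\Upsilon_p$ for the gradient terms, the Lipschitz bound on $\nabla_{\bm u}G$ for the $G$-term, and then defines $f(\bm{v},W):=\alpha(\bm{v})\gamma(B(\bm{v})W)$ (your $F(\bm{v},x)=f(\bm{v},\nabla\bm{u}^\dagger(x))$) to recognize the two frozen terms as $\partial_{\bm{v}}f(\bm{u}^\dagger,\nabla\bm{u}^\dagger)$ and control the second-order Taylor remainder via the same $N\le 2$ / $N\ge 3$ Sobolev case split. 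The telescoping you anticipate is exactly how the paper assembles $E_\nu(\bm{u})-E_\nu(\bm{u}^\dagger)$, and your description of where $\|\nabla^2\gamma\|_{L^\infty}$ and $p\ge N$ enter matches the paper's use.
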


\begin{proof}
  For simplicity, let us define
  \begin{align*}
    f(\bm{v},W):=\alpha(\bm{v})\gamma(B(\bm{v})W), \mbox{ for } [\bm{v},W] \in \R^M \times \R^{M\times N}.
  \end{align*}
  Let $\bm{\psi}:=\bm{u}-\bm{u}^\dagger$ in \eqref{dm03}. By Young's inequality, assumption (A4), and the convexity of $\gamma$ and $\Upsilon_p$, we have
  \begin{align}
    &\frac{C_A}{\tau}|\bm{u}-\bm{u}^\dagger|_{[H]^M}^2+\frac{\mu}{\tau}|\nabla(\bm{u}-\bm{u}^\dagger)|_{[H]^{M\times N}}^2+\nu\int_\Omega \Upsilon_p(\nabla \bm{u})\,dx-\nu\int_\Omega \Upsilon_p(\nabla \bm{u}^\dagger)\,dx
    \\
    &(\nabla_{\bm{u}} G(x,\bm{u}),\bm{u}-\bm{u}^\dagger)_{[H]^M}+ \int_\Omega f(\bm{u},\nabla \bm{u})\,dx-\int_\Omega f(\bm{u},\nabla \bm{u}^\dagger)\,dx
    \\
    &+\int_\Omega[\partial_{\bm{v}}f](\bm{u}^\dagger,\nabla \bm{u}^\dagger)\cdot(\bm{u}-\bm{u}^\dagger)\,dx\leq 0. \label{thm0010}
  \end{align}
  By using (A2), it is observed that
  \begin{align}
    &(\nabla_{\bm{u}} G(x,\bm{u}),\bm{u}-\bm{u}^\dagger)_{[H]^M}\geq \int_\Omega G(x,\bm{u})\,dx-\int_\Omega G(x,\bm{u}^\dagger)\,dx
    \\
    &\quad +(\nabla_{\bm{u}} G(x,\bm{u})-\nabla_{\bm{u}} G(x,\bm{u}^\dagger),\bm{u}-\bm{u}^\dagger)_{[H]^M}-\frac{1}{2}L_G|\bm{u}-\bm{u}^\dagger|_{[H]^M}^2
    \\
    &\geq \int_\Omega G(x,\bm{u})\,dx-\int_\Omega G(x,\bm{u}^\dagger)\,dx -\frac{3}{2}L_G|\bm{u}-\bm{u}^\dagger|_{[H]^M}^2. \label{thm0011}
  \end{align}
  Thanks to \eqref{thm0010} and \eqref{thm0011}, and by applying Taylor's theorem for $ f \in C^{1,1}(\R^M\times \R^{M\times N})\cap C^2(\R^M\times \R^{M\times N})$, we obtain
  \begin{align}
    &\left(\frac{C_A}{\tau}-\frac{3}{2}L_G\right)|\bm{u}-\bm{u}^\dagger|_{[H]^M}^2+\frac{\mu}{\tau}|\nabla(\bm{u}-\bm{u}^\dagger)|_{[H]^{M\times N}}^2
    +E_\nu(\bm{u})-E_\nu(\bm{u}^\dagger)
    \\
    &\leq \int_\Omega \Big[ f(\bm{u},\nabla \bm{u}^\dagger)-f(\bm{u}^\dagger,\nabla \bm{u}^\dagger)-[\partial_{\bm{v}}f](\bm{u}^\dagger,\nabla \bm{u}^\dagger)(\bm{u}-\bm{u}^\dagger)\Big]\,dx
    \\
    &\leq \int_\Omega \int_0^1(1-\sigma){}^\top(\bm{u}-\bm{u}^\dagger)\frac{\partial^2}{\partial \bm{v}^2}\Big[f(\bm{u}^\dagger+\sigma(\bm{u}-\bm{u}^\dagger),\nabla \bm{u}^\dagger)\Big](\bm{u}-\bm{u}^\dagger)\,d\sigma dx.\label{thm0012}
  \end{align}
  Here, by using the chain-rule, we easily check that
  \begin{gather}
    \left| \frac{\partial^2}{\partial \bm{v}^2}\Big[f(\bm{v},W)\Big] \right|\leq C_2(1+\|\nabla\gamma\|_{W^{1,\infty}})^2(1+|W|^2), 
    \\
    \mbox{ for all }[\bm{v},W]\in \R^M\times \R^{M\times N},\label{thm0013}
  \end{gather}
  where 
  \begin{align*}
    C_2:=2(1+\|\alpha\|_{W^{2,\infty}})^2(1+\|B_0\|_{W^{2,\infty}})^2.
  \end{align*}
  So as a consequence of \eqref{thm0012} and \eqref{thm0013}, it is inferred that
\begin{align}
    &\left(\frac{C_A}{\tau}-\frac{3}{2}L_G\right)|\bm{u}-\bm{u}^\dagger|_{[H]^M}^2+\frac{\mu}{\tau}|\nabla(\bm{u}-\bm{u}^\dagger)|_{[H]^{M\times N}}^2+E_\nu(\bm{u})-E_\nu(\bm{u}^\dagger)
    \\
    &\leq C_2(1+\|\nabla\gamma\|_{W^{1,\infty}})^2\int_\Omega(1+|\nabla\bm{u}^\dagger|^2)(\bm{u}-\bm{u}^\dagger)^2\,dx.
    \label{thm0014}
  \end{align}
  However, we analyze the principal integral appearing on the right-hand side of \eqref{thm0014}, by distinguishing cases according to the spatial dimension and evaluating it under each corresponding condition. First, we consider the case when $N = 1,2$. By using (A5) and the Sobolev embedding $H^1(\Omega) \subset L^{\frac{2p}{p-2}}(\Omega)$, we can compute the principal integral as follows:
  \begin{align}
    &\int_\Omega(1+|\nabla\bm{u}^\dagger|^2)(\bm{u}-\bm{u}^\dagger)^2\,dx
    \leq |\bm{u}-\bm{u}^\dagger|_{[L^{\frac{2p}{p-2}}(\Omega)]^M}^2|\nabla \bm{u}^\dagger|_{[L^p(\Omega)]^{M\times N}}^2+|\bm{u}-\bm{u}^\dagger|_{[H]^M}^2
    \\
    &\leq (C_{H^1}^{L^{\frac{2p}{p-2}}})^2|\nabla \bm{u}^\dagger|_{[L^p(\Omega)]^{M\times N}}^2|\bm{u}-\bm{u}^\dagger|_{[V]^M}^2+|\bm{u}-\bm{u}^\dagger|_{[H]^M}^2
    \\
    &\leq \frac{(C_{H^1}^{L^{\frac{2p}{p-2}}})^2((C_\Upsilon)^\frac{2}{p}+1)+1}{(C_A \land \mu)\nu^\frac{4}{p}}(1+E_\nu(\bm{u}^\dagger)^\frac{2}{p})\cdot 
    \\
    &\qquad\cdot(C_A|\bm{u}-\bm{u}^\dagger|_{[H]^M}^2+\mu|\nabla(\bm{u}-\bm{u}^\dagger)|_{[H]^M}^2).\label{thm00152}
  \end{align}\noeqref{thm00152}
  Here, $C_{H^1}^{L^{\frac{2p}{p-2}}}$ denotes the constant associated with the Sobolev embedding $H^1(\Omega) \subset L^{\frac{2p}{p-2}}(\Omega)$. Lastly, we turn to the case $N \geq 3$. We estimate the principal integral by using (A5) and the Sobolev embedding $H^1(\Omega) \subset L^{\frac{2N}{N-2}}(\Omega)$.
\begin{align}
    &\int_\Omega(1+|\nabla\bm{u}^\dagger|^2)(\bm{u}-\bm{u}^\dagger)^2\,dx
    \\
    &\leq |\bm{u}-\bm{u}^\dagger|_{[L^{\frac{2N}{N-2}}(\Omega)]^M}^2|\nabla \bm{u}^\dagger|_{[L^p(\Omega)]^{M\times N}}^2|\Omega|^\frac{2(p-N)}{pN}+|\bm{u}-\bm{u}^\dagger|_{[H]^M}^2
    \\
    &\leq |\Omega|^\frac{2(p-N)}{pN}(C_{H^1}^{L^{\frac{2N}{N-2}}})^2|\nabla \bm{u}^\dagger|_{[L^p(\Omega)]^{M\times N}}^2|\bm{u}-\bm{u}^\dagger|_{[V]^M}^2+|\bm{u}-\bm{u}^\dagger|_{[H]^M}^2
    \\
    &\leq \frac{|\Omega|^\frac{2(p-N)}{pN}(C_{H^1}^{L^{\frac{2N}{N-2}}})^2((C_\Upsilon)^\frac{2}{p}+1)+1}{(C_A \land \mu)\nu^\frac{2}{p}}(1+E_\nu(\bm{u}^\dagger)^\frac{2}{p})\cdot 
    \\
    &\qquad\cdot(C_A|\bm{u}-\bm{u}^\dagger|_{[H]^M}^2+\mu|\nabla(\bm{u}-\bm{u}^\dagger)|_{[H]^M}^2).\label{thm00153}
  \end{align}
  Here, $C_{H^1}^{L^{\frac{2N}{N-2}}}$ represents the constant in the Sobolev embedding $H^1(\Omega) \subset L^{\frac{2N}{N-2}}(\Omega)$. From \eqref{thm00152} and \eqref{thm00153}, we derive the following uniform estimate for the principal integral appearing on the right-hand side of \eqref{thm0014}, applicable to any spatial dimension $N \in \N$:
  \begin{align}
    &\int_\Omega(1+|\nabla\bm{u}^\dagger|^2)(\bm{u}-\bm{u}^\dagger)^2\,dx
    \\
    &\leq \frac{\widetilde{C}}{\nu^\frac{2}{p}}(1+E_\nu(\bm{u}^\dagger)^\frac{2}{p})(C_A|\bm{u}-\bm{u}^\dagger|_{[H]^M}^2+\mu|\nabla(\bm{u}-\bm{u}^\dagger)|_{[H]^M}^2),\label{thm00154}
  \end{align}
  where $\widetilde{C}>0$ is a positive constant defined by 
  \begin{align*}
    \widetilde{C}&:=\max\left\{\frac{(C_{H^1}^{L^{\frac{2p}{p-2}}})^2((C_\Upsilon)^\frac{2}{p}+1)+1}{C_A \land \mu},\frac{|\Omega|^\frac{2(p-N)}{pN}(C_{H^1}^{L^{\frac{2N}{N-2}}})^2((C_\Upsilon)^\frac{2}{p}+1)+1}{C_A \land \mu}\right\}.
  \end{align*}
  Due to \eqref{thm0012}, \eqref{thm0014}, and \eqref{thm00154}, we will infer that 
\begin{align}
    &\left(\frac{C_A}{\tau}-\frac{3}{2}L_G\right)|\bm{u}-\bm{u}^\dagger|_{[H]^M}^2+\frac{\mu}{\tau}|\nabla(\bm{u}-\bm{u}^\dagger)|_{[H]^{M\times N}}^2+E_\nu(\bm{u}^i_\nu)-E_\nu(\bm{u}^\dagger)
    \\
    &\leq \frac{C_2\widetilde{C}}{\nu^\frac{2}{p}}(1+\|\nabla\gamma\|_{W^{1,\infty}})^2(1+E_\nu(\bm{u}^\dagger)^\frac{2}{p})(C_A|\bm{u}-\bm{u}^\dagger|_{[H]^M}^2+\mu|\nabla(\bm{u}-\bm{u}^\dagger)|_{[H]^M}^2).
    \label{thm0016}
  \end{align}

  Finally, let us set $\tau_3 \in (0, \tau_2)$ by:
  \begin{align}
    \tau_3 &= \tau_3(\bm{u}^\dagger, \nu, \| \nabla^2 \gamma \|_{L^\infty})
    \\
    &:=\min \left\{ \tau_2,\frac{C_A}{6L_G},\frac{\nu^\frac{4}{p}}{4C_2\widetilde{C}(1+\|\nabla\gamma\|_{W^{1,\infty}})^2(1+E(\bm{u}_0)^\frac{2}{p})} \right\},
  \end{align}
  so that: 
  \begin{align}\label{thm0017}
    \left\{
      \begin{aligned}
        &\frac{2\tau C_2\widetilde{C}(1+\|\nabla\gamma\|_{W^{1,\infty}})^2(1+E(\bm{u}_0)^\frac{2}{p})}{\nu^\frac{4}{p}}\leq \frac{1}{2}, 
        \\
        &\frac{C_A}{2\tau}-\frac{3}{2}L_G^2\geq \frac{C_A}{4\tau},\mbox{ for } \tau \in (0,\tau_3).
      \end{aligned}
    \right.
  \end{align}
  Then, for $\tau \in (0,\tau_3)$, we can verify the each step energy-inequality \eqref{dm04}.
\end{proof}

\begin{proof}[Proof of Theorem \ref{003Thm1}]
  For $\bar{\bm{u}}, \hat{\bm{u}} \in [W^{1,p}(\Omega)]^M$, we note that:
  \begin{equation}
    \mbox{if } E(\bar{\bm{u}}) \leq E(\hat{\bm{u}}), \mbox{ then } \tau_3(\bar{\bm{u}},\nu,\| \nabla^2 \gamma \|_{L^\infty}) \geq \tau_3(\hat{\bm{u}},\nu,\| \nabla^2 \gamma \|_{L^\infty}), \label{dm05}
  \end{equation}
  where $\tau_3$ is as in Lemma \ref{lem001dm}. We set $\tau_0$ by:
  \begin{equation}
    \tau_0 = \tau_0(\nu,\| \nabla^2 \gamma\|_{L^\infty}) := \inf_{\nu \in (0, \nu_0)} \tau_3({\bm{u}_\nu^0, \nu, \| \nabla^2 \gamma \|_{L^\infty}}) > 0. \label{dm06}
  \end{equation}
  We give the proof by induction argument, i.e.: we show the following statement for each step $i = 1,2,\dots, m$:
  \begin{itemize}
    \item[($*$)]we can find a unique solution $\bm{u}_\nu^i$ to (AP)$_\nu^\tau$, satisfying the energy-inequality \eqref{f-ene0}.
  \end{itemize}
  At first, for $i = 1$, the existence and uniqueness of the solution to (AP)$^\tau_\nu$ is a direct consequence of Lemma \ref{lem001uka}. Indeed, by applying lemma \ref{lem001uka} when the case $\bm{u}^\dagger = \bm{u}_\nu^0$ and noting $\tau_0 < \tau_3({\bm{u}_\nu^0, \nu,\|\nabla^2 \gamma\|_{L^\infty}})$, we immediately obtain the unique solution $\bm{u}^1_\nu\in [W^{1,p}(\Omega)]^M$ to (AP)$^\tau_\nu$. Besides, by virtue of Lemma \ref{lem001dm}, we can confirm that $\bm{u}^1_\nu$ satisfies the energy-dissipation inequality.

  Next, we suppose that the statement ($*$) holds for all $i = 1,2,\dots, k$, with $k \leq m-1$, and consider the case when $i = k+1$. Thanks to the energy-inequality, we immediately see that
  \begin{align*}
    E_\nu(\bm{u}^k_\nu)\leq E_\nu(\bm{u}^{k-1}_\nu)\leq \cdots \leq E_\nu(\bm{u}^1_\nu)\leq E_\nu(\bm{u}^0_\nu), \mbox{ for all } \nu \in (0,\nu_0).
  \end{align*}
  Hence, on account of \eqref{dm05} and \eqref{dm06}, we have:
  \begin{equation}
    \tau_0 := \inf_{\nu \in (0,\nu_0)} \tau_3(\bm{u}_\nu^0, \nu, \| \nabla^2 \gamma \|_{L^\infty}) \leq \tau_3(\bm{u}_\nu^k, \nu, \| \nabla^2 \gamma \|_{L^\infty}).
  \end{equation}
  In this light, by applying Lemma \ref{lem001dm} again, we obtain the unique solution $\bm{u}_\nu^{k+1}$ satisfying the energy-dissipation \eqref{f-ene0}, and thus, we complete the proof of Theorem \ref{003Thm1}.
\end{proof}

\section{Proof of Main Theorem 1.}\label{sec:proof1}
First, we introduce a lemma that is essential for the proof of the Main Theorem \ref{mainThm1}. 
  \begin{lemma}\label{lem003}
    For any $\bm{\psi}\in [\mathscr{V}]^M$, and $\{\nu_k\}_{k\in\N}\subset (0,1);\nu_k\downarrow0$, there exists a sequence $\{ {\bm{\psi}_k} \}_{k\in\N} \subset [C^\infty(\overline{Q})]^M$ satisfying the following properties.
    \begin{align*}
      &\bm{\psi}_k \rightarrow\bm{\psi} \mbox{ in }[\mathscr{V}]^M \mbox{ as }k\rightarrow\infty,
      \\
      &\nu^{\frac{1}{p}}_k\nabla\bm{\psi}_k \rightarrow 0 \mbox{ in }L^p(0,T;[L^p(\Omega)]^{M\times N})\mbox{ as }k\rightarrow\infty.
    \end{align*}
    \begin{proof}
      Let $\bm{\psi}\in [\mathscr{V}]^M$ be fixed. Then, there exists a sequence $\{\widehat{\bm{\psi}}_n\}_{n\in\N}\subset [C^\infty(\overline{Q})]^M$ such that
      \begin{align*}
        \widehat{\bm{\psi}}_n \rightarrow \bm{\psi} \mbox{ in } [\mathscr{V}]^M \mbox{ as }m \rightarrow\infty.
      \end{align*}
      Moreover, we take the sequence $\{\widehat{\nu}_n\}_{n\in\N}\subset (0,1)$ such that 
      \begin{gather*}
        \widehat{\nu}_{n+1}<\widehat{\nu}_n<2^{-n},~~~\nu|\nabla \widehat{\bm{\psi}}_n|^p_{L^p(0,T;[L^p(\Omega)]^{M\times N})}<2^{-n},
        \\
        \mbox{ for any }n\in\N, \mbox{ and } \nu\in(0,\widehat{\nu}_n].
      \end{gather*}
         Thus, we construct a sequence $\{ {\bm{\psi}_k} \}_{k\in\N} \subset [C^\infty(\overline{Q})]^M$ in the following manner:
    \begin{align*}
      \bm{\psi}_k:=\left\{
        \begin{aligned}
        &\widehat{\bm{\psi}}_n &&\mbox{ if }\widehat{\nu}_{n+1} <\nu_k\leq \widehat{\nu}_n,
        \\
        &1 &&\mbox{ if }\widehat{\nu}_1 < \nu_k < 1.
      \end{aligned}\right.
    \end{align*}
    \end{proof}
  \end{lemma}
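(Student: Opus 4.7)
The plan is a standard diagonal-selection argument. The lemma asks for a sequence of smooth approximants $\bm{\psi}_k$ of $\bm{\psi}\in[\mathscr{V}]^M$ that are simultaneously ``scaled down'' by the prescribed null sequence $\nu_k\downarrow 0$, in the sense that $\nu_k^{1/p}\nabla\bm{\psi}_k\to 0$ in $L^p(0,T;[L^p(\Omega)]^{M\times N})$. The two requirements are compatible because for each $\nu_k$ we have complete freedom in choosing \emph{which} smooth approximant to assign, and by matching $\bm{\psi}_k$ to progressively finer smooth approximants only once $\nu_k$ has become sufficiently small, we can ensure the $p$-Dirichlet contribution stays under control.

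First, I would invoke the density of $[C^\infty(\overline{Q})]^M$ in $[\mathscr{V}]^M$ to produce an arbitrary smooth approximating sequence $\widehat{\bm{\psi}}_n\to\bm{\psi}$ in $[\mathscr{V}]^M$. Since each $\widehat{\bm{\psi}}_n$ is smooth on the compact set $\overline{Q}$, the quantity $\widehat{M}_n := |\nabla\widehat{\bm{\psi}}_n|_{L^p(0,T;[L^p(\Omega)]^{M\times N})}^p$ is a finite number. I would then pick inductively a strictly decreasing threshold sequence $\widehat{\nu}_n \in (0,\widehat{\nu}_{n-1})$ satisfying $\widehat{\nu}_n(\widehat{M}_n+1) < 2^{-n}$, so that $\widehat{\nu}_n\widehat{M}_n < 2^{-n}$ for every $n\in\N$.

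With these ingredients in hand, I would define the diagonal selection $\bm{\psi}_k := \widehat{\bm{\psi}}_{n(k)}$, where $n(k)$ is the unique index with $\widehat{\nu}_{n(k)+1} < \nu_k \leq \widehat{\nu}_{n(k)}$ (with a fallback constant definition for the finitely many early indices $k$ with $\nu_k\geq\widehat{\nu}_1$). Because $\nu_k\downarrow 0$ and $\widehat{\nu}_n\downarrow 0$, one has $n(k)\to\infty$ as $k\to\infty$, hence $\bm{\psi}_k = \widehat{\bm{\psi}}_{n(k)} \to \bm{\psi}$ in $[\mathscr{V}]^M$, giving the first convergence. For the second, the construction yields $\nu_k\,|\nabla\bm{\psi}_k|_{L^p(0,T;[L^p(\Omega)]^{M\times N})}^p \leq \widehat{\nu}_{n(k)}\widehat{M}_{n(k)} < 2^{-n(k)} \to 0$, and extracting the $p$-th root is exactly $\nu_k^{1/p}\nabla\bm{\psi}_k\to 0$ in $L^p$.

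There is essentially no analytic obstacle in this argument; the only point requiring care is the ordering of choices, namely that $\widehat{\nu}_n$ must be selected \emph{after} the corresponding $\widehat{\bm{\psi}}_n$, so that $\widehat{M}_n$ is already a fixed finite number at the moment $\widehat{\nu}_n$ is chosen. Everything else is routine diagonal bookkeeping between the two sequences $\{\widehat{\nu}_n\}$ and $\{\nu_k\}$.
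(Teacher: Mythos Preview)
Your proposal is correct and follows essentially the same diagonal-selection argument as the paper: density of smooth functions gives $\widehat{\bm{\psi}}_n\to\bm{\psi}$, thresholds $\widehat{\nu}_n$ are chosen afterward so that $\widehat{\nu}_n|\nabla\widehat{\bm{\psi}}_n|_{L^p}^p<2^{-n}$, and $\bm{\psi}_k$ is defined by locating $\nu_k$ in the interval $(\widehat{\nu}_{n+1},\widehat{\nu}_n]$. Your write-up is in fact more complete than the paper's, which stops after constructing $\bm{\psi}_k$ without spelling out the verification of the two convergences.
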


  \begin{proof}[Proof of Main Theorem 1]
    First, we define a regularization sequence $ \{ \gamma_\varepsilon \}_{\varepsilon \in (0, 1)} \subset C^{1, 1}(\R^{M\times N}) \cap C^2(\R^{M\times N}) $, by setting:
  \begin{gather*}
    \gamma_\varepsilon:=\left\{
      \begin{aligned}
          &\rho_\varepsilon*\gamma,&\mbox{ if }\gamma\notin C^{1,1}(\R^{M\times N}) \cap C^2(\R^{M\times N}),
        \\
          &\gamma,&\mbox{ if } \gamma\in C^{1,1}(\R^{M\times N}) \cap C^2(\R^{M\times N}),
      \end{aligned}\right.
      \\
      {\leftline{\mbox{with use of the standard mollifier $ \rho_\varepsilon \in C_\mathrm{c}^\infty(\R^{M\times N}) $, for any $ \varepsilon \in (0, 1) $.}}}
  \end{gather*}
    In addition, we define an approximating energy $E_{\nu,\varepsilon}$ on $[H]^M$, as follows:
    \begin{gather}
    E_{\nu,\varepsilon} :\bm{u} \in [V]^M \mapsto E_{\nu,\varepsilon}(\bm{u}):=\int_\Omega\alpha(\bm{u}){\gamma}_\varepsilon(B(\bm{u})\nabla \bm{u})\,dx+\int_\Omega G(x,\bm{u})\,dx\nonumber
    \\
    +{\nu}\int_\Omega\Upsilon_p(\nabla \bm{u})\,dx
    \in[0,\infty).\label{E_eps}
  \end{gather}
  As is easily seen, 
    \begin{gather}\label{gamma_ep}
        \begin{cases}
            \gamma_\varepsilon \to \gamma \mbox{ uniformly on $ \R^{M\times N} $ as $ \varepsilon \downarrow 0 $,}
            \\
            \|\nabla \gamma_\varepsilon\|_{L^\infty} \leq \|\nabla \gamma\|_{L^\infty}, \mbox{ for all $ \varepsilon \in (0, 1) $.}
        \end{cases}
    \end{gather}
    The uniform convergence of $ \{ \gamma_\varepsilon \}_{\varepsilon \in (0, 1)} $ implies
\begin{gather}\label{gammaconv}
    E_{\nu,\varepsilon}\rightarrow E_{\nu}\mbox{ on }[H]^M,\mbox{ in the sense of $\Gamma$-convergence, as }\varepsilon\downarrow0.
\end{gather}
Let $\{\bm{u}^0_\nu\}_{\nu\in(0,\nu_0)}$ be a sequence of initial value for Theorem \ref{003Thm1}, constructed by Lemma \ref{lem002} with $\bm{w}_0=\bm{u}_0$. Then, owing to and the uniform estimate for $ \{ \|\nabla \gamma_\varepsilon\|_{L^\infty} \}_{\varepsilon \in (0, 1)} $ and Remark \ref{rem001}, we have
   \begin{align}\label{constc_E}
       c_E:=\sup_{\substack{\nu\in(0,\nu_0)\\ \varepsilon\in(0,1)}}E_{\nu,\varepsilon}(\bm{u}_{0,\nu}) \leq E(\bm{u}_0)+1+\|\nabla \gamma\|_{L^\infty}|\Omega|<\infty,
   \end{align}
    {where $ |\Omega| $ denotes the Lebesgue measure of $ \Omega \subset \R^N $. }

    Now, for all $\nu \in (0,\nu_0)$ and $ \varepsilon \in (0, 1) $, we define a constant $ \tau_{*}(\nu,\varepsilon) $, which depends on both $\nu$ and $\varepsilon$, as follows:
 \begin{align*}
    \tau_{*}(\nu,\varepsilon)&:=\tau_0(\nu,\|\nabla\gamma_\varepsilon\|_{L^\infty}),
  \end{align*}
  where $\tau_0$ is as in Theorem \ref{003Thm1}. By applying Theorem \ref{003Thm1} to the case when $\gamma=\gamma_\varepsilon$, we find a sequence of functions $\{\bm{u}^i_{\nu,\varepsilon}\}_{i=1}^m$, for $\nu\in(0,\nu_0)$, $\varepsilon\in(0,1)$, and $\tau\in(0,\tau_{*}(\nu,\varepsilon))$, such that:
      \begin{align}
    &\frac{1}{\tau}(A(\bm{u}^{i-1}_{\nu,\varepsilon})(\bm{u}^i_{\nu,\varepsilon}-\bm{u}^{i-1}_{\nu,\varepsilon}),\bm{\varphi})_{[H]^M}+\frac{\mu}{\tau}(\nabla(\bm{u}^i_{\nu,\varepsilon}-\bm{u}^{i-1}_{\nu,\varepsilon}),\nabla\bm{\varphi})_{[H]^{M\times N}}
    \\
    &\quad+(\alpha(\bm{u}^i_{\nu,\varepsilon}) B^*(\bm{u}^i_{\nu,\varepsilon}) \nabla\gamma_\varepsilon({B}(\bm{u}^i_{\nu,\varepsilon})\nabla\bm{u}^i_{\nu,\varepsilon}),\nabla\bm{\varphi})_{[H]^{M\times N}}
    +(\nabla_{\bm{u}} G(x,\bm{u}^i_{\nu,\varepsilon}),\bm{\varphi})_{[H]^M}
    \\
    &\quad +\nu\int_\Omega \nabla \Upsilon_p(\nabla \bm{u}^i_{\nu,\varepsilon}):\nabla\bm{\varphi}\,dx
    +([\nabla\alpha](\bm{u}^{i-1}_{\nu,\varepsilon})\gamma_\varepsilon({B}(\bm{u}^{i-1}_{\nu,\varepsilon})\nabla\bm{u}^{i-1}_{\nu,\varepsilon}),\bm{\varphi})_{[H]^M}
    \\
    &\quad+(\alpha(\bm{u}^{i-1}_{\nu,\varepsilon})\nabla\gamma_\varepsilon({B}(\bm{u}^{i-1}_{\nu,\varepsilon})\nabla\bm{u}^{i-1}_{\nu,\varepsilon}):[\nabla{B}](\bm{u}^{i-1}_{\nu,\varepsilon})\nabla\bm{u}^{i-1}_{\nu,\varepsilon},\bm{\varphi})_{[H]^M}=0,\label{3TimeDis-02}
    \\
    &\hspace{15ex}\mbox{ for all } \bm{\varphi}\in [W^{1,p}(\Omega)]^M, \mbox{ and }i=1,\dots,m.
    \end{align}
    Moreover, $\{\bm{u}^i_{\nu,\varepsilon}\}$ fulfill the following energy-inequality:
    \begin{gather}
    \frac{C_A}{4\tau}|\bm{u}^{i}_{\nu,\varepsilon}-\bm{u}^{i-1}_{\nu,\varepsilon}|^2_{[H]^M}+\frac{\mu}{2\tau}|\nabla(\bm{u}^{i}_{\nu,\varepsilon}-\bm{u}^{i-1}_{\nu,\varepsilon})|^2_{[H]^{M\times N}} +E_{\nu,\varepsilon}(\bm{u}^{i}_{\nu,\varepsilon}) \leq E_{\nu,\varepsilon}(\bm{u}^{i-1}_{\nu,\varepsilon}),\label{ukai_ene}
    \\
    \mbox{ for all } i=1,\dots,m.
    \end{gather}
  Additionally, taking into account Notation \ref{deftseq} and Example \ref{ex1}, we observe that
  \begin{align}
    &\nabla\gamma_{\varepsilon}(B( [\underline{\bm{u}}_{\nu,\varepsilon}]_\tau(t))\nabla [\underline{\bm{u}}_{\nu,\varepsilon}]_\tau(t))\in\partial\Phi_{\varepsilon}(B( [\underline{\bm{u}}_{\nu,\varepsilon}]_\tau(t))\nabla [\underline{\bm{u}}_{\nu,\varepsilon}]_\tau(t))\mbox{ in }[H]^{M\times N},
    \label{subdig1}
    \\
    &\hspace{12ex}\mbox{ for all } \nu\in (0,\nu_0),~\varepsilon\in(0,1), \mbox{ and a.e. }t\in(0,T),
    \nonumber
  \end{align}
  and hence, 
  \begin{gather}
    \nabla\gamma_{\varepsilon}(B( [\underline{\bm{u}}_{\nu,\varepsilon}]_\tau)\nabla [\underline{\bm{u}}_{\nu,\varepsilon}]_\tau)\in\partial\widehat{\Phi}^I_{\varepsilon}(B( [\underline{\bm{u}}_{\nu,\varepsilon}]_\tau)\nabla [\underline{\bm{u}}_{\nu,\varepsilon}]_\tau)\mbox{ in }L^2(I;[H]^{M\times N}),
    \label{subdig2}
    \\
    \mbox{ for all } \nu\in (0,\nu_0),~\varepsilon\in(0,1), \mbox{ and any open interval }I\subset(0,T),
    \nonumber
  \end{gather}\noeqref{ukai_ene,subdig1}
  where  $[\bm{u}_{\nu,\varepsilon}]_\tau$, $ [\overline{\bm{u}}_{\nu,\varepsilon}]_\tau $, and $ [\underline{\bm{u}}_{\nu,\varepsilon}]_\tau $  are the time-interpolation of $ \{ \bm{u}_{\nu,\varepsilon}^i \}_{i = 1}^m $ as in Notation \ref{deftseq}, for $ \nu\in(0,\nu_0),~\varepsilon \in (0, 1) $ and $ \tau \in (0, \tau_{*}(\nu,\varepsilon)) $.

  \subsection{Case: Analysis in the Limit $\nu\downarrow0$}\label{subnu}

From \eqref{gamma_ep}--\eqref{subdig2} and (A5), we can see the following boundedness:
\begin{description}
    \item[(B-1)]$\{[\bm{u}_{\nu,\varepsilon}]_\tau~|~\tau\in(0,\tau_*(\nu,\varepsilon)),~\nu\in(0,\nu_0),~\varepsilon\in(0,1)\}$ is bounded in $L^\infty(0,T;[V]^M)$ and in $W^{1,2}(0,T;[V]^M)$,
    \item[(B-2)]$\{[\overline{\bm{u}}_{\nu,\varepsilon}]_\tau~|~\tau\in(0,\tau_*(\nu,\varepsilon)),~\nu\in(0,\nu_0),~\varepsilon\in(0,1)\}$ and $\{[\underline{\bm{u}}_{\nu,\varepsilon}]_{\tau}~|~\tau\in(0,\tau_*(\nu,\varepsilon)),~\nu\in(0,\nu_0),~\varepsilon\in(0,1)\}$ are bounded in $L^\infty(0,T;[V]^M)$,
    \item[(B-3)]$\{\nabla\gamma_{\varepsilon}(B( [\underline{\bm{u}}_{\nu,\varepsilon}]_\tau(t))\nabla [\underline{\bm{u}}_{\nu,\varepsilon}]_\tau(t))~|~\tau\in(0,\tau_*(\nu,\varepsilon)),~\nu\in(0,\nu_0),~\varepsilon\in(0,1)\}$ is bounded in $L^\infty(Q;\R^{M\times N})$,
    \item[(B-4)]The function of time $t\in[0,T]\mapsto E_{\nu,\varepsilon}([\overline{\bm{u}}_{\nu,\varepsilon}]_\tau(t))\in[0,\infty)$ and $t\in [0,T]\mapsto E_{\nu,\varepsilon}([\underline{\bm{u}}_{\nu,\varepsilon}]_\tau(t))\in[0,\infty)$ are nonincreasing for every $\nu\in (0,\nu_0)$, $0<\varepsilon<1$ and $0<\tau<\tau_*(\varepsilon)$. Moreover, $\{E_{\nu,\varepsilon}(\bm{u}_{\nu}^0)~|~\nu\in(0,\nu_0),~\varepsilon\in(0,1)\}$ is bounded, and hence, the class $\{E_{\nu,\varepsilon}([\overline{\bm{u}}_{\nu,\varepsilon}]_{\tau})~|~\tau\in(0,\tau_*(\nu,\varepsilon)),~\nu\in(0,\nu_0),~\varepsilon\in(0,1)\} $ and $\{E_{\nu,\varepsilon}([\underline{\bm{u}}_{\nu,\varepsilon}]_{\tau})~|~\tau\in(0,\tau_*(\nu,\varepsilon)),~\nu\in(0,\nu_0),~\varepsilon\in(0,1)\} $ are bounded in $BV(0,T)$,
    \item[(B-5)] $\{\nu\Upsilon_p(\nabla[\overline{\bm{u}}_{\nu,\varepsilon}]_\tau)~|~\tau\in(0,\tau_*(\nu,\varepsilon)),~\nu\in(0,\nu_0),~\varepsilon\in(0,1)\}$ and $\{\nu\Upsilon_p(\nabla[\underline{\bm{u}}_{\nu,\varepsilon}]_{\tau})~|~\tau\in(0,\tau_*(\nu,\varepsilon)),~\nu\in(0,\nu_0),~\varepsilon\in(0,1)\}$ are bounded in $L^\infty(0,T;L^1(\Omega))$.
  \end{description}
On account of (B-1)--(B-3), we can apply the compactness theory of Aubin's type \cite[Corollary 4]{MR0916688}, and find a sequence $\{\nu_n\}_{n\in\N}\subset (0,\nu_0)$ and $\{\varepsilon_n\}_{n\in\N}\subset(0,1)$ with $ \{ \tau_n \}_{n\in\N} \subset (0, 1) $, and a function $\bm{u}\in[\mathscr{H}]^M$ with $\bm{w}^*\in L^\infty(Q;\R^{M\times N})$ such that: 
\begin{gather*}
    \nu_n \downarrow0,~\varepsilon_n \downarrow 0, ~ \tau_n := \frac{1}{2} \bigl( \tau_*(\nu_n,\varepsilon_n) \wedge \nu_n \wedge \varepsilon_n \wedge 1 \bigr) \downarrow 0, \mbox{ as $ n \to \infty $,}
\end{gather*}
 \begin{align}
    & \bm{u}_n : =  [\overline{\bm{u}}_{\nu_n,\varepsilon_n}]_{\tau_n}\rightarrow \bm{u}\mbox{ in }C([0,T];[H]^M),\mbox{ weakly in } W^{ 1 , 2 }( 0,T ; [V]^M),
    \nonumber 
    \\
      &\hspace*{10ex}\mbox{ weakly-}* \mbox{ in } L^\infty( 0,T ; [V]^M) , \mbox{ as $ n \to \infty $,} 
    \label{conv_1}
  \end{align}
  and
  \begin{align}
      &\nabla\gamma_{\varepsilon_n}(B( [\underline{\bm{u}}_{\nu_n,\varepsilon_n}]_{\tau_n}(t))\nabla [\underline{\bm{u}}_{\nu_n,\varepsilon_n}]_{\tau_n})~\rightarrow\bm{w}^*\mbox{ weakly-}*\mbox{in }L^\infty(Q;\R^{M\times N}), \mbox{ as $ n \to \infty $,}
    \label{conv_2}
  \end{align}
  in particular, from Lemma \ref{lem002} 
  \begin{align}\label{conv_3}
    \bm{u}(0)=\lim_{n\rightarrow\infty}\bm{u}_n(0)=\lim_{n\rightarrow\infty}\bm{u}^0_{\nu_n}=\bm{u}_0\mbox{ in }[H]^M.
  \end{align}
    Here, since
  \begin{align*}
    &\left\{
    \begin{aligned}
      &\max\{|([\overline{\bm{u}}_{\nu,\varepsilon}]_{\tau}-\bm{u}_{n})(t)|_{[V]^M},|([\underline{\bm{u}}_{\nu,\varepsilon}]_{\tau}-\bm{u}_{n})(t)|_{[V]^M}\} 
      \\
      &\quad\leq \int_{t_{i-1}}^{t_i}|\partial_t\bm{u}_{n}(t)|_{[V]^M}\,dt\leq \tau^{\frac{1}{2}}|\partial_t\bm{u}_{n}|_{[\mathscr{V}]^M},
    \end{aligned}
    \right.
    \\
    &\mbox{ for }t\in[t_{i-1},t_i),~i=1,\dots,m,~\tau\in(0,\tau_*(\nu,\varepsilon)),~\nu\in(0,\nu_0),\mbox{ and }~\varepsilon\in(0,1),
    \end{align*}
  one can also see from \eqref{conv_1} that:
  \begin{align}
    &\overline{\bm{u}}_{n}:=[\overline{\bm{u}}_{\nu_n,\varepsilon_n}]_{\tau_n}\rightarrow \bm{u},~\underline{\bm{u}}_{n}:=[\underline{\bm{u}}_{\nu_n,\varepsilon_n}]_{\tau_n}\rightarrow \bm{u} \mbox{ in }L^\infty(0,T;[H]^M),
    \nonumber
    \\
    &\hspace*{10ex}\mbox{ weakly-}* \mbox{ in }L^\infty(0,T;[V]^M),
    \label{conv_4}
  \end{align}
and in particular,
\begin{align}\label{conv_5}
    &\overline{\bm{u}}_{n}(t)\rightarrow \bm{u}(t), \underline{\bm{u}}_{n}(t)\rightarrow \bm{u}(t)\mbox{ in }[H]^M \mbox{ and weakly in }[V]^M,
    \\
    &\hspace{15ex}\mbox{ as } n\rightarrow\infty,\mbox{ for any }t\in(0,T).
    \nonumber
\end{align}
Moreover, \eqref{constc_E} and (B-4) enable us to see
\begin{gather}\label{conv_6}
    \bigl| E_{\nu_n,\varepsilon_n}(\overline{\bm{u}}_n) -E_{\nu_n,\varepsilon_n}(\underline{\bm{u}}_n) \bigr|_{L^1(0, T)} \leq 2c_E \tau_n \to 0, \mbox{ as $ n \to \infty $}.
\end{gather}
So, applying Helly's selection theorem \cite[Chapter 7, p.167]{rudin1976principles}, we will find a bounded and nonincreasing function $\mathcal{J}_*:[0,T]\mapsto[0,\infty)$, such that 
\begin{align}
    &E_{\nu_n,\varepsilon_n}(\overline{\bm{u}}_{n})\rightarrow\mathcal{J}_* \mbox{ and } E_{\nu_n,\varepsilon_n}(\underline{\bm{u}}_{n})\rightarrow\mathcal{J}_* \nonumber
  \\
  & \qquad \mbox{ weakly-}*\mbox{ in }BV(0,T),\mbox{ and }\mbox{weakly-}* \mbox{ in }L^\infty(0,T),
  \\
    &E_{\nu_n,\varepsilon_n} (\overline{\bm{u}}_{n}(t)) \rightarrow \mathcal{J}_*(t) \mbox{ and } E_{\nu_n,\varepsilon_n} (\underline{\bm{u}}_{n}(t)) \rightarrow \mathcal{J}_*(t), \mbox{ for any }t\in[0,T], \label{conv_7}
\end{align}
as $ n \to \infty $, by taking a subsequence if necessary.

Now, let us show the limit function $\bm{u}$ is a solution to the system (S)$_\nu$. (S2) can be checked by \eqref{conv_3}. Next, let us show $\bm{u}$ satisfies the variational inequalities (S1). Let us take any $ t \in (0, T] $. Then, from \eqref{3TimeDis-02}, the sequences as in \eqref{conv_1}--\eqref{conv_4} satisfy the following inequality:
      \begin{align}
    &\int^t_0(A(\underline{\bm{u}}_n(\sigma))\partial_t\bm{u}_n,(\overline{\bm{u}}_n-\bm{\omega})(\sigma))_{[H]^M}\,d\sigma
    \\
    &\quad+\mu\int^t_0(\nabla\partial_t\bm{u}_n(\sigma),\nabla(\overline{\bm{u}}_n-\bm{\omega})(\sigma))_{[H]^{M\times N}}\,d\sigma
    \\
    &\quad+\int^t_0(\nabla_{\bm{u}} G(x,\overline{\bm{u}}_n(\sigma)),(\overline{\bm{u}}_n-\bm{\omega})(\sigma))_{[H]^M}\,d\sigma+\nu_n\int^t_0\int_\Omega \Upsilon_p(\nabla \overline{\bm{u}}_n(\sigma))\,dxd\sigma
    \\
    &\quad +\int^t_0([\nabla\alpha](\underline{\bm{u}}_n(\sigma))\gamma_{\varepsilon_n}(\underline{\bm{u}}_n(\sigma))\nabla\underline{\bm{u}}_n(\sigma),(\overline{\bm{u}}_n-\bm{\omega})(\sigma))_{[H]^M}\,d\sigma
    \\
    &\quad+\int^t_0(\alpha(\underline{\bm{u}}_n(\sigma))\nabla\gamma_{\varepsilon_n}(\nabla\underline{\bm{u}}_n(\sigma){B}_0(\underline{\bm{u}}_n(\sigma)))
    \\
    &\qquad:\nabla\underline{\bm{u}}_n(\sigma)[\nabla{B}_0](\underline{\bm{u}}_n(\sigma)),(\overline{\bm{u}}_n-\bm{\omega})(\sigma))_{[H]^M}\,d\sigma
    \\
    &\quad +\int^t_0\int_\Omega\alpha(\overline{\bm{u}}_n(\sigma))\gamma_{\varepsilon_n}(\nabla\overline{\bm{u}}_n(\sigma){B}_0(\overline{\bm{u}}_n(\sigma)))\,dxd\sigma
    \\
    &\leq \int^t_0\int_\Omega\alpha(\overline{\bm{u}}_n(\sigma))\gamma_{\varepsilon_n}(\nabla\bm{\omega}(\sigma){B}_0(\overline{\bm{u}}_n(\sigma)))\,dxd\sigma+\nu_n\int^t_0\int_\Omega \Upsilon_p(\nabla \bm{\omega}(\sigma))\,dxd\sigma,\label{conv_8}
    \\
    &\hspace{20ex}\mbox{ for all } \bm{\omega}\in L^2(0,T;[W^{1,p}(\Omega)]^M), \mbox{ and }n\in\N.
  \end{align}
  According to (A5) and (B-5), it is obvious that
  \begin{align}\label{conv_9}
    \liminf_{n\rightarrow\infty}\nu_n\int_0^t\int_\Omega  \Upsilon_p(\nabla\overline{\bm{u}}_n(\sigma))\,dxd\sigma\geq0.
  \end{align}
  Also, by using (A1), (A6), \eqref{gamma_ep} and \eqref{conv_4}, weakly lower-semicontinuity of $\gamma_\varepsilon$ and Fatou's lemma, one can see 
  \begin{align}
    &\liminf_{n\rightarrow\infty} \int^t_0\int_\Omega\alpha(\overline{\bm{u}}_n(\sigma))\gamma_{\varepsilon_n}(\nabla\overline{\bm{u}}_n(\sigma){B}_0(\overline{\bm{u}}_n(\sigma)))\,dxd\sigma
    \\
    &\geq -C_\gamma\|\nabla\alpha\|_{L^\infty}\|B_0\|_{L^\infty}\lim_{n\rightarrow\infty}\int^t_0\int_\Omega|\overline{\bm{u}}_n(\sigma)-{\bm{u}}(\sigma)|(|\nabla\overline{\bm{u}}_n(\sigma)|+1)\,dxd\sigma
    \\
    &\quad -\lim_{n\rightarrow\infty}\|\gamma_{\varepsilon_n}-\gamma\|_{L^\infty}\int^t_0\int_\Omega\alpha(\bm{u}(\sigma))\,dxd\sigma
    \\
    &\quad +\liminf_{n\rightarrow\infty}\int^t_0\int_\Omega\alpha(\bm{u}(\sigma))\gamma(\nabla\overline{\bm{u}}_n(\sigma){B}_0(\overline{\bm{u}}_n(\sigma)))\,dxd\sigma
    \\
    &\geq \int^t_0\int_\Omega\alpha(\bm{u}(\sigma))\gamma(\nabla{\bm{u}}(\sigma)){B}_0({\bm{u}}(\sigma))\,dxd\sigma.\label{conv_10}
  \end{align}\noeqref{conv_10}
  Furthermore, setting $m_t:=([\frac{t}{\tau}]+1)\land \frac{T}{\tau}$ with use of the integral part $[\frac{t}{\tau}]\in\mathbb{Z}$ of $\frac{t}{\tau}\in\R$, we obtain that  
  \begin{align*}
    &\int_0^t(\nabla\partial_t\bm{u}_n(\sigma),\nabla\overline{\bm{u}}_n(\sigma))_{[H]^{M\times N}}\,d\sigma
    \\
    &=\int_{0}^{t_{{m}_t}}(\nabla\partial_t\bm{u}_n(\sigma),\nabla\overline{\bm{u}}_n(\sigma))_{[H]^{M\times N}}\,d\sigma
    -\int_{t}^{t_{m_t}}(\nabla\partial_t\bm{u}_n(\sigma),\nabla\overline{\bm{u}}_n(\sigma))_{[H]^{M\times N}}\,d\sigma
    \\
    &\geq\sum^{m_t}_{i=1}\frac{1}{2}(|\nabla \bm{u}_n(t_i)|_{[H]^{M\times N}}^2-|\nabla \bm{u}_n(t_{i-1})|_{[H]^{M\times N}}^2)
    \\
    &\qquad-|\Omega|^{\frac{p-2}{2p}}|\nabla\overline{\bm{u}}_n|_{L^\infty(0,T;[L^p(\Omega)]^{M\times N})}\int_{t}^{t_{m_t}}|\nabla\partial_t\bm{u}_n(\sigma)|_{[H]^{M\times N}}\,d\sigma
    \\
    &\geq\frac{1}{2}(|\nabla \bm{u}_n(t_{m_t})|_{[H]^{M\times N}}^2-|\nabla \bm{u}_n(0)|_{[H]^{M\times N}}^2)
    \\
    &\quad-\tau_n^\frac{1}{2}|\Omega|^{\frac{p-2}{2p}}|\nabla\overline{\bm{u}}_n|_{L^\infty(0,T;[L^p(\Omega)]^{M\times N})}|\nabla\partial_t\bm{u}_n|_{[\mathscr{H}]^{M\times N}}
    \\
    &=\frac{1}{2}(|\nabla\overline{\bm{u}}_n(t)|^2_{[H]^{M\times N}}-|\nabla \bm{u}_0|_{[H]^{M\times N}}^2)
      \\
      & \quad -\tau_n^\frac{1}{2}|\Omega|^{\frac{p-2}{2p}}|\nabla\overline{\bm{u}}_n|_{L^\infty(0,T;[L^p(\Omega)]^{M\times N})}|\nabla\partial_t\bm{u}_n|_{[\mathscr{H}]^{M\times N}}.
  \end{align*}
  Hence, by \eqref{conv_5}, we obtain
  \begin{align}
    &\liminf_{n\rightarrow\infty}\int_0^t(\nabla\partial_t\bm{u}_n(\sigma),\nabla\overline{\bm{u}}_n(\sigma))_{[H]^{M\times N}}\,d\sigma
    \label{conv_11}
    \\
    &\quad\geq \frac{1}{2}(|\nabla{\bm{u}}(t)|^2_{[H]^{M\times N}}-|\nabla \bm{u}_0|_{[H]^{M\times N}}^2)=\int_{0}^{t}(\nabla\partial_t\bm{u}(\sigma),\nabla \bm{u}(\sigma))_{[H]^{M\times N}}\,d\sigma,
    \nonumber
  \end{align}
  Here, $\bm{\psi}_n$ be the sequence obtained by Lemma \ref{lem003} with $\bm{\psi}=\bm{u}$. substituting $\bm{\psi}_n$ for $\bm{\omega}$ in \eqref{conv_8}, we see that 
  \begin{align}
    &\limsup_{n\rightarrow\infty}\Big(\frac{\nu_n}{C_\Upsilon}\int_0^t|\nabla\overline{\bm{u}}_n(\sigma)|^p_{[L^p(\Omega)]^{M\times N}}\,d\sigma-\frac{\nu_n}{C_\Upsilon}|\Omega|T 
    \\
    &\qquad+\frac{\mu}{2}\big(|\nabla\overline{\bm{u}}_n(t)|^2_{[H]^{M\times N}}-|\nabla\bm{u}_0|^2_{[H]^{M\times N}}\big)
    \\
    &\qquad -\tau_n^\frac{1}{2}|\Omega|^{\frac{p-2}{2p}}|\nabla\overline{\bm{u}}_n|_{L^\infty(0,T;[L^p(\Omega)]^{M\times N})}|\nabla\partial_t\bm{u}_n|_{[\mathscr{H}]^{M\times N}}
    \\
    &\qquad+\int^t_0\int_\Omega\alpha(\overline{\bm{u}}_n(\sigma))\gamma_{\varepsilon_n}(\nabla\overline{\bm{u}}_n(\sigma){B}_0(\overline{\bm{u}}_n(\sigma)))\,dxd\sigma
    \Big)
    \\
    &\limsup_{n\rightarrow\infty}\left(\nu_n\int_0^t\int_\Omega\Upsilon_p(\nabla\overline{\bm{u}}_n(\sigma))\,dxd\sigma+\mu\int_0^t(\nabla\partial_t\bm{u}_n(\sigma),\nabla\overline{\bm{u}}_n(\sigma))_{[H]^{M\times N}}\,d\sigma\right.
    \\
    &\qquad\left.+\int^t_0\int_\Omega\alpha(\overline{\bm{u}}_n(\sigma))\gamma_{\varepsilon_n}(\nabla\overline{\bm{u}}_n(\sigma){B}_0(\overline{\bm{u}}_n(\sigma)))\,dxd\sigma
    \right)
  \\
    &\leq \lim_{n\rightarrow\infty}\nu_nC_\Upsilon\Big(\int_0^t|\nabla\bm{\psi}_n(\sigma)|^p_{L^p(\Omega)}\,d\sigma+|\Omega|T\Big)
    \\
    &\quad+\lim_{n\rightarrow\infty}\mu\int_0^t(\nabla\partial_t\bm{u}_n(\sigma),\nabla{\bm{\psi}}_n(\sigma))_{[H]^{M\times N}}\,d\sigma 
    \end{align}
  \begin{align}
    &\quad-\lim_{n\rightarrow\infty}\int_0^t(A(\underline{\bm{u}}_n(\sigma))\partial_t\bm{u}_n(\sigma)+\nabla_{\bm{u}} G(x,\overline{\bm{u}}_n(\sigma)),(\overline{\bm{u}}_n-\bm{\psi}_n)(\sigma))_{[H]^M}\,d\sigma
    \\
    &\quad-\lim_{n\rightarrow\infty}\int^t_0([\nabla\alpha](\underline{\bm{u}}_n(\sigma))\gamma_{\varepsilon_n}(\nabla\underline{\bm{u}}_n(\sigma)B_0(\underline{\bm{u}}_n(\sigma))),(\overline{\bm{u}}_n-\bm{\psi}_n)(\sigma))_{[H]^M}\,d\sigma\label{conv_12}
    \\
    &\quad-\lim_{n\rightarrow\infty}\int^t_0(\alpha(\underline{\bm{u}}_n(\sigma))\nabla\gamma_{\varepsilon_n}(\nabla\underline{\bm{u}}_n(\sigma){B}_0(\underline{\bm{u}}_n(\sigma))):
    \\
    &\qquad :\nabla\underline{\bm{u}}_n(\sigma)[\nabla{B}_0](\underline{\bm{u}}_n(\sigma)),(\overline{\bm{u}}_n-\bm{\psi}_n)(\sigma))_{[H]^M}d\sigma
    \\
    &=0+\mu\int_0^t(\nabla\partial_t\bm{u}(\sigma),\nabla{\bm{u}}(\sigma))_{[H]^{M\times N}}\,d\sigma+\int^t_0\int_\Omega\alpha({\bm{u}}(\sigma))\gamma(\nabla{\bm{u}}(\sigma){B}_0({\bm{u}}(\sigma)))\,dxd\sigma
    \\
    &=0+\frac{\mu}{2}\left(|\nabla{\bm{u}}(t)|^2_{[H]^{M\times N}}-|\nabla\bm{u}_0|^2_{[H]^{M\times N}}\right)+\int^t_0\int_\Omega\alpha({\bm{u}}(\sigma))\gamma(\nabla{\bm{u}}(\sigma){B}_0({\bm{u}}(\sigma)))\,dxd\sigma
  \end{align}
  From (Fact 1) and \eqref{conv_9}--\eqref{conv_11}, we obtain the following convergences as $n\rightarrow\infty$:
  \begin{align}
    &\nu_n\int_0^t\int_\Omega\Upsilon_p(\nabla\overline{\bm{u}}_n(\sigma))\,dxd\sigma\rightarrow0,\label{conv_13}
    \\
    &\int_0^t(\nabla\partial_t\bm{u}_n(\sigma),\nabla\overline{\bm{u}}_n(\sigma))_{[H]^{M\times N}}\,d\sigma\rightarrow\int_0^t(\nabla\partial_t\bm{u}(\sigma),\nabla{\bm{u}}(\sigma))_{[H]^{M\times N}}\,d\sigma,\label{conv_14}
    \\
    &|\nabla\overline{\bm{u}}_n(t)|^2_{[H]^{M\times N}}\rightarrow|\nabla\bm{u}(t)|^2_{[H]^{M\times N}}, \mbox{ for any }t\in[0,T].\label{conv_15}
  \end{align}
  Additionally, thanks to \eqref{gamma_ep}, \eqref{constc_E}, (B-4), and uniform convexity of $L^2$-based topologies, we can derive that, for any $t\in[0,T]$, as $n\rightarrow\infty$:
  \begin{align}
    &\bullet\overline{\bm{u}}_n(t)\rightarrow \bm{u}(t)\mbox{ and }\underline{\bm{u}}_n(t)\rightarrow \bm{u}(t)\mbox{ in }[V]^M, \mbox{ with }
    \\
    &\qquad |\nabla(\overline{\bm{u}}_n-\underline{\bm{u}}_n)(t)|_{[H]^{M\times N}}\leq \tau_n^\frac{1}{2}|\nabla\partial_t\bm{u}_n|_{[\mathscr{H}]^M}\rightarrow0,\label{conv_16}
    \\[1.0ex]
    &\bullet|\alpha(\overline{\bm{u}}_n(t))\gamma_{\varepsilon_n}(\nabla\overline{\bm{u}}_n(t){B}_0(\overline{\bm{u}}_n(t)))-\alpha({\bm{u}}(t))\gamma(\nabla{\bm{u}}(t){B}_0({\bm{u}}(t)))|_{L^1(\Omega)}
    \\
    &\qquad \leq C_\gamma\|\nabla\alpha\|_{L^\infty}\|B_0\|_{L^\infty}|(\overline{\bm{u}}_n-\bm{u})(t)|_{[H]^M}(|\nabla\overline{\bm{u}}_n(t)|_{[H]^{M\times N}}+|\Omega|^\frac{1}{2})
    \\
    &\qquad +\|\alpha\|_{L^\infty}\|B_0\|_{L^\infty}\|\gamma_{\varepsilon_n}-\gamma\|_{L^\infty}|\nabla\overline{\bm{u}}_n(t)|_{[H]^{M\times N}}|\Omega|^\frac{1}{2}
    \\
    &\qquad +\|\alpha\|_{L^\infty}\|\nabla B_0\|_{L^\infty}\|\nabla\gamma\|_{L^\infty}|(\overline{\bm{u}}_n-\bm{u})(t)|_{[H]^M}|\nabla\overline{\bm{u}}_n(t)|_{[H]^{M\times N}}
    \\
    &\qquad +\|\alpha\|_{L^\infty}\|B_0\|_{L^\infty}\|\nabla\gamma\|_{L^\infty}|\nabla(\overline{\bm{u}}_n-\bm{u})(t)|_{[H]^{M\times N}}\rightarrow0,\label{conv_17}
    \\
    &\mbox{ as well as }|\alpha(\underline{\bm{u}}_n(t))\gamma_{\varepsilon_n}(\nabla\underline{\bm{u}}_n(t){B}_0(\underline{\bm{u}}_n(t)))-\alpha({\bm{u}}(t))\gamma(\nabla{\bm{u}}(t){B}_0({\bm{u}}(t)))|_{L^1(\Omega)}\rightarrow0.
  \end{align}
  
  \eqref{conv_5}--\eqref{conv_7}, \eqref{conv_16} and \eqref{conv_17} imply 
  \begin{gather}\label{convJ}
     \begin{cases}
         E_{\nu_n,\varepsilon_n}(\overline{\bm{u}}_n(t)) \to \mathcal{J}_*(t) = E(\bm{u}(t)),
         \\
         E_{\nu_n,\varepsilon_n}(\underline{\bm{u}}_n(t)) \to \mathcal{J}_*(t) = E(\bm{u}(t)), 
     \end{cases}
     \mbox{a.e. $ t \in (0, T) $, as $ n \to \infty $.}
 \end{gather}
 Now, we take $\omega=\bm{\varphi}$ in $[V]^M$ in \eqref{conv_8}, and consider to pass to the limit $n\rightarrow\infty$. Then, in light of \eqref{conv_1}, \eqref{conv_4}, \eqref{conv_13} and \eqref{conv_14}, and the Lebesgue dominated convergence theorem, we obtain
    \begin{gather}\label{conv_18}
    \int_I(A(\bm{u}(t))\partial_t\bm{u}(t),\bm{u}(t)-\bm{\varphi})_{[H]^M}\,dt+\int_I\mu(\nabla\partial_t\bm{u}(t),\nabla(\bm{u}(t)-\bm{\varphi}))_{[H]^{M\times N}}\,dt
    \\
    +\int_I(\nabla_{\bm{u}} G(x,\bm{u}(t))+[\nabla\alpha](\bm{u}(t))\gamma({B}(\bm{u}(t))\nabla\bm{u}(t)),\bm{u}(t)-\bm{\varphi})_{[H]^M}\,dt
    \\
    +\int_I(\alpha(\bm{u}(t))\bm{w}^*(t):[\nabla{B}](\bm{u}(t))\nabla\bm{u}(t),\bm{u}(t)-\bm{\varphi})_{[H]^M}\,dt
    \\
    +\int_I\int_{\Omega}\alpha(\bm{u}(t))\gamma(B(\bm{u}(t))\nabla \bm{u}(t))\,dx\,dt\leq\int_I\int_{\Omega}\alpha(\bm{u}(t))\gamma (B(\bm{u}(t))\nabla\bm{\varphi})\,dx\,dt,
    \end{gather}
    for any open interval $I\subset(0,T)$. Moreover, by involving \eqref{subdig1}, \eqref{subdig2}, \eqref{conv_2}, together with Example \ref{ex2}, and (Fact 3) in Remark \ref{rem2}, one can conclude that 
    \begin{equation*}
     \bm{w}^*\in\partial\widehat{\Phi}_0^I(B(\bm{u})\nabla \bm{u})\mbox{ in }L^2(I;[H]^{M\times N}),
    \end{equation*}
 and hence, 
 \begin{equation}\label{subdig3}
  \begin{aligned}
    &\bm{w}^*\in\partial\Phi_0(B(\bm{u})\nabla \bm{u}) \mbox{ in } [H]^{M\times N}, \mbox{ for a.e. }t\in(0,T),\mbox{ and }
    \\
    &\bm{w}^*\in\partial\gamma(B(\bm{u})\nabla \bm{u}) \mbox{ in }\R^{M\times N}, \mbox{ a.e. in } Q.
  \end{aligned}
 \end{equation}
\eqref{conv_18}--\eqref{subdig3} imply that the limit $\bm{u}$ satisfies (S0) and (S1).

Next, we consider the energy inequality \eqref{ene-inq1}. From \eqref{ukai_ene}, it is derived that 
\begin{align}
      &\int_{t_{i-1}}^{t_i}\biggl(\frac{C_A}{4}|\partial_t\bm{u}_n(\sigma)|^2_{[H]^M}+\frac{\mu}{2}|\nabla\partial_t\bm{u}_n(\sigma)|^2_{[H]^{M\times N}}\biggr)\,d\sigma+E_{\nu_n,\varepsilon_n}(\overline{\bm{u}}_n(t))
      \nonumber
      \\
      &\quad\leq E_{\nu_n,\varepsilon_n}(\underline{\bm{u}}_n(t)),\mbox{ for }t\in[t_{i-1},t_i),~i=1,2,\dots,{\ts\frac{T}{\tau_n}},\mbox{ and }n\in\N.
      \label{energy1}
\end{align}
Here, setting $m^s:=[\frac{s}{\tau}]$ and $m_t:=\bigl([\frac{t}{\tau}]+1\bigr)\land \frac{T}{\tau}$ for $0\leq s< t\leq T$, and summing both sides of \eqref{energy1} for $i=m^s+1, m^s+2,\dots,m_t$, we obtain that
\begin{align}
      &\frac{C_A}{4}\int_s^t \left(|\partial_t\bm{u}_n(\sigma)|^2_{[H]^M}+2\mu|\nabla\partial_t\bm{u}_n(\sigma)|^2_{[H]^{M\times N}}\right)\,d\sigma+E_{\nu_n,\varepsilon_n}(\overline{\bm{u}}_n(t))
      \nonumber
      \\
      &\leq \frac{C_A}{4}\int_{m^s\tau_n}^{m_t\tau_n} \left(|\partial_t\bm{u}_n(\sigma)|^2_{[H]^M}+2\mu|\nabla\partial_t\bm{u}_n(\sigma)|^2_{[H]^{M\times N}}\right)\,d\sigma+E_{\nu_n,\varepsilon_n}(\overline{\bm{u}}_n(t))
      \nonumber
      \\
      &\leq E_{\nu_n,\varepsilon_n}(\underline{\bm{u}}_n(s)), \mbox{ for }s,t\in[0,T];s\leq t,\mbox{ and }n\in\N.\label{energy3}
    \end{align}
    Now, taking the limit $n\rightarrow\infty$ and using \eqref{conv_1} \eqref{conv_2}, \eqref{convJ} and \eqref{energy3}, we see that 
\begin{gather}
    \frac{C_A}{4}\int_{s}^{t}|\partial_t\bm{u}(\sigma)|^2_{[H]^M}\,d\sigma+\frac{\mu}{2}\int_{s}^{t}|\nabla\partial_t\bm{u}(\sigma)|^2_{[H]^{M\times N}}\,d\sigma+E(\bm{u}(t)) \leq E(\bm{u}(s)),
    \label{energy4}
    \\
    \mbox{ for a.e. $ s \in [0, T) $ including $s = 0$, and a.e. $ t \in (s, T) $.}
\end{gather}
In addition, the condition ``a.e. $ t \in (s, T) $'' in \eqref{energy4} can be strengthened to ``for any $ t \in [s, T] $''. Indeed, by taking a sequence $\{t_n\}_{n\in\N}\subset (t,T)$ with $t_n \rightarrow t$, we observe that 
\begin{gather}
  \frac{C_A}{4}\int_{s}^{t_n}|\partial_t\bm{u}(\sigma)|^2_{[H]^M}\,d\sigma+\frac{\mu}{2}\int_{s}^{t_n}|\nabla\partial_t\bm{u}(\sigma)|^2_{[H]^{M\times N}}\,d\sigma+E(\bm{u}(t_n)) \leq E(\bm{u}(s)),
  \label{energy5}
  \\
  \mbox{ for all $ n\in\N $.}
\end{gather}
Having in mind the lower semi-continuity of $E(\bm{u})$ on $[H]^M$ and the convergence $\bm{u}(t_n) \rightarrow \bm{u}(t)$ in $[H]^M$, by taking the lower limit of both sides of \eqref{energy5} yields that 
\begin{align}
  &\frac{C_A}{4}\int_{s}^{t}|\partial_t\bm{u}(\sigma)|^2_{[H]^M}\,d\sigma+\frac{\mu}{2}\int_{s}^{t}|\nabla\partial_t\bm{u}(\sigma)|^2_{[H]^{M\times N}}\,d\sigma +E(\bm{u}(t)) 
    \\
  \leq& \liminf_{n\rightarrow\infty}\Biggl(\frac{C_A}{4}\int_{s}^{t_n}|\partial_t\bm{u}(\sigma)|^2_{[H]^M}\,d\sigma+\frac{\mu}{2}\int_{s}^{t_n}|\nabla\partial_t\bm{u}(\sigma)|^2_{[H]^{M\times N}}\,d\sigma+E(\bm{u}(t_n)) \Biggr)
  \\
  \leq& E(\bm{u}(s)). \label{energy6}
\end{align}
Accordingly, the energy-inequality \eqref{ene-inq1} follows directly from \eqref{energy6}.

\subsection{Case: $\nu>0$ is a fixed positive constant}
From \eqref{gamma_ep}--\eqref{subdig2} and (A5), we can see the following boundedness:
\begin{description}
    \item[(C-1)]$\{[\bm{u}_{\nu,\varepsilon}]_\tau~|~\tau\in(0,\tau_*(\nu,\varepsilon)),~\varepsilon\in(0,1)\}$ is bounded in $L^\infty(0,T;[W^{1,p}(\Omega)]^M)$ and in
    \\
    $W^{1,2}(0,T;[V]^M)$,
    \item[(C-2)]$\{[\overline{\bm{u}}_{\nu,\varepsilon}]_\tau~|~\tau\in(0,\tau_*(\nu,\varepsilon)),~\varepsilon\in(0,1)\}$ and $\{[\underline{\bm{u}}_{\nu,\varepsilon}]_{\tau}~|~\tau\in(0,\tau_*(\nu,\varepsilon)),~\varepsilon\in(0,1)\}$ are bounded in $L^\infty(0,T;[W^{1,p}(\Omega)]^M)$,
    \item[(C-3)]$\{\nabla\gamma_{\varepsilon}(B( [\underline{\bm{u}}_{\nu,\varepsilon}]_\tau(t))\nabla [\underline{\bm{u}}_{\nu,\varepsilon}]_\tau(t))~|~\tau\in(0,\tau_*(\nu,\varepsilon)),~\varepsilon\in(0,1)\}$ is bounded in 
    \\
    $L^\infty(Q;\R^{M\times N})$,
    \item[(C-4)]The function of time $t\in[0,T]\mapsto E_{\nu,\varepsilon}([\overline{\bm{u}}_{\nu,\varepsilon}]_\tau(t))\in[0,\infty)$ and $t\in [0,T]\mapsto E_{\nu,\varepsilon}([\underline{\bm{u}}_{\nu,\varepsilon}]_\tau(t))\in[0,\infty)$ are nonincreasing for every $0<\varepsilon<1$ and $0<\tau<\tau_*(\nu,\varepsilon)$. Moreover, $\{E_{\nu,\varepsilon}(\bm{u}_{\nu,\varepsilon}^0)~|~\varepsilon\in(0,1)\}$ is bounded, and hence, the class $\{E_{\nu,\varepsilon}([\overline{\bm{u}}_{\nu,\varepsilon}]_{\tau})~|~\tau\in(0,\tau_*(\nu,\varepsilon)),~\varepsilon\in(0,1)\} $ and $\{E_{\nu,\varepsilon}([\underline{\bm{u}}_{\nu,\varepsilon}]_{\tau})~|~\tau\in(0,\tau_*(\nu,\varepsilon)),~\varepsilon\in(0,1)\} $ are bounded in $BV(0,T)$.
  \end{description}
  Therefore, applying the compactness theory of Aubin's type \cite[Corollary 4]{MR0916688}, and find a sequence $\{\varepsilon_n\}_{n\in\N}\subset(0,1)$ with $ \{ \tau_n \}_{n\in\N} \subset (0, 1) $, and a function $\bm{u}_\nu\in[\mathscr{H}]^M$ with $\bm{w}^*\in L^\infty(Q;\R^{M\times N})$ such that: 
\begin{gather*}
    \varepsilon_n \downarrow 0, ~ \tau_n := \frac{1}{2} \bigl( \tau_*(\nu,\varepsilon_n) \wedge \varepsilon_n \wedge 1 \bigr) \downarrow 0, \mbox{ as $ n \to \infty $,}
\end{gather*}
and 
 \begin{equation}\label{conv_100}
  \left\{
    \begin{aligned}
    & \bm{u}_n : =  [\overline{\bm{u}}_{\nu,\varepsilon_n}]_{\tau_n}\rightarrow \bm{u}_\nu\mbox{ in }C([0,T];[H]^M),
    \\
    &\mbox{weakly in } W^{ 1 , 2 }( 0,T ; [V]^M),\mbox{ weakly-}* \mbox{ in } L^\infty( 0,T ; [W^{1,p}(\Omega)]^M),
    \\
    &\nabla\gamma_{\varepsilon_n}(B( [\underline{\bm{u}}_{\nu,\varepsilon_n}]_{\tau_n}(t))\nabla [\underline{\bm{u}}_{\nu,\varepsilon_n}]_{\tau_n})~\rightarrow\bm{w}^*\mbox{ weakly-}*\mbox{ in }L^\infty(Q;\R^{M\times N}),
    \\
    &\overline{\bm{u}}_{n}:=[\overline{\bm{u}}_{\nu,\varepsilon_n}]_{\tau_n}\rightarrow \bm{u}_\nu,~\underline{\bm{u}}_{n}:=[\underline{\bm{u}}_{\nu,\varepsilon_n}]_{\tau_n}\rightarrow \bm{u}_\nu \mbox{ in }L^\infty(0,T;[H]^M),
    \\
    &\mbox{weakly-}* \mbox{ in }L^\infty(0,T;[W^{1,p}(\Omega)]^M),
    \end{aligned}\right.
  \end{equation}
  In particular, since Sobolev embedding $W^{1,p}(\Omega)\subset C(\overline{\Omega})$ is compact for $p>N$, the sequence $\bm{u}_n$ converges $\bm{u}$ in $C(\overline{Q})$.
  Moreover, \eqref{gammaconv}, \eqref{constc_E} and (C-4) enable us to see
\begin{gather}\label{conv_101}
    \bigl| E_{\nu,\varepsilon_n}(\overline{\bm{u}}_n) -E_{\nu,\varepsilon_n}(\underline{\bm{u}}_n) \bigr|_{L^1(0, T)} \leq 2c_E \tau_n \to 0, \mbox{ as $ n \to \infty $}.
\end{gather}
So, Using Helly's selection theorem \cite[Chapter 7, p.167]{rudin1976principles}, one can construct a bounded and nonincreasing function $\mathcal{\widetilde{J}}_*:[0,T]\mapsto[0,\infty)$, such that 
\begin{align}
    &E_{\nu,\varepsilon_n}(\overline{\bm{u}}_{n})\rightarrow\mathcal{\widetilde{J}}_* \mbox{ and } E_{\nu,\varepsilon_n}(\underline{\bm{u}}_{n})\rightarrow\mathcal{\widetilde{J}}_* \nonumber
  \\
  & \qquad \mbox{ weakly-}*\mbox{ in }BV(0,T),\mbox{ and }\mbox{weakly-}* \mbox{ in }L^\infty(0,T),
  \\
    &E_{\nu,\varepsilon_n} (\overline{\bm{u}}_{n}(t)) \rightarrow \mathcal{\widetilde{J}}_*(t) \mbox{ and } E_{\nu,\varepsilon_n} (\underline{\bm{u}}_{n}(t)) \rightarrow \mathcal{\widetilde{J}}_*(t), \mbox{ for any }t\in[0,T], \label{conv_102}
\end{align}
as $ n \to \infty $, by taking a subsequence if necessary.

We now verify that $\bm{u}_\nu$ is a solution to the system (S)$_\nu$. (S2) follows directly from \eqref{conv_100}. Next, let us show $\bm{u}_\nu$ satisfies the variational inequalities (S1). Let us take any $ t \in [0, T] $. Then, by \eqref{3TimeDis-02}, the sequences appearing in \eqref{conv_100} obey the following inequality:
  \begin{align}
    &\int^t_0(A(\underline{\bm{u}}_n(\sigma))\partial_t\bm{u}_n,(\overline{\bm{u}}_n-\bm{\omega})(\sigma))_{[H]^M}\,d\sigma+\mu\int^t_0(\nabla\partial_t\bm{u}_n(\sigma),\nabla(\overline{\bm{u}}_n-\bm{\omega}))_{[H]^{M\times N}}\,d\sigma
    \\
    &\quad+\int^t_0(\nabla_{\bm{u}} G(x,\overline{\bm{u}}_n(\sigma)),(\overline{\bm{u}}_n-\bm{\omega})(\sigma))_{[H]^M}\,d\sigma+\nu_n\int^t_0\int_\Omega \Upsilon_p(\nabla \overline{\bm{u}}_n(\sigma))\,dxd\sigma
    \\
    &\quad +\int^t_0([\nabla\alpha](\underline{\bm{u}}_n(\sigma))\gamma_{\varepsilon_n}(\nabla\underline{\bm{u}}_n(\sigma)B_0(\underline{\bm{u}}_n(\sigma))),(\overline{\bm{u}}_n-\bm{\omega})(\sigma))_{[H]^M}\,d\sigma
    \\
    &\quad+\int^t_0(\alpha(\underline{\bm{u}}_n(\sigma))\nabla\gamma_{\varepsilon_n}(\nabla\underline{\bm{u}}_n(\sigma){B}_0(\underline{\bm{u}}_n(\sigma))):\nabla\underline{\bm{u}}_n(\sigma)[\nabla{B}_0](\underline{\bm{u}}_n(\sigma)),(\overline{\bm{u}}_n-\bm{\omega})(\sigma))_{[H]^M}
    \\
    &\quad +\int^t_0\int_\Omega\alpha(\overline{\bm{u}}_n(\sigma))\gamma_{\varepsilon_n}(\nabla\overline{\bm{u}}_n(\sigma){B}_0(\overline{\bm{u}}_n(\sigma)))\,dxd\sigma
    \\
    &\leq \int^t_0\int_\Omega\alpha(\overline{\bm{u}}_n(\sigma))\gamma_{\varepsilon_n}(\nabla\bm{\omega}(\sigma){B}_0(\overline{\bm{u}}_n(\sigma)))\,dxd\sigma+\nu_n\int^t_0\int_\Omega \Upsilon_p(\nabla \bm{\omega}(\sigma))\,dxd\sigma,\label{conv_103}
    \\
    &\hspace{15ex}\mbox{ for all }\bm{\omega}\in L^2(0,T;[W^{1,p}(\Omega)]^M), \mbox{ and }i=1,\dots,m.
  \end{align}
  Here, we define the functional $\Pi$ on $[L^p(\Omega)]^{M\times N}$ as follows.
  \begin{align*}
    \Pi:W\in [L^p(\Omega)]^{M\times N}\mapsto \Pi(W):=\int_\Omega \Upsilon_p(W)\,dx \in[0,\infty).
  \end{align*}
  Then, by (A5), $\Pi$ is weakly lower semi-continuous on $[L^p(\Omega)]^{M\times N}$. In view of \eqref{conv_100} and Fatou's lemma, it follows that 
    \begin{align}\label{conv_1031}
    \liminf_{n\rightarrow\infty}\nu\int_0^t\int_\Omega  \Upsilon_p(\nabla\overline{\bm{u}}_n(\sigma))\,dxd\sigma\geq\nu\int_0^t\int_\Omega  \Upsilon_p(\nabla{\bm{u}}_\nu(\sigma))\,dxd\sigma.
  \end{align}

  Here, by putting $\bm{\omega}=\bm{u}_\nu$ in \eqref{conv_103} and using \eqref{conv_100}, we see that 
    \begin{align}
    &\limsup_{n\rightarrow\infty}\Big(\nu\int_0^t\int_\Omega\Upsilon_p(\nabla\overline{\bm{u}}_n(\sigma))\,dxd\sigma+\frac{\mu}{2}\big(|\nabla\overline{\bm{u}}_n(t)|^2_{[H]^{M\times N}}-|\nabla\bm{u}_0|^2_{[H]^{M\times N}}\big)
    \\
    &\quad -\tau_n^\frac{1}{2}|\Omega|^{\frac{p-2}{2p}}|\nabla\overline{\bm{u}}_n|_{L^\infty(0,T;[L^p(\Omega)]^{M\times N})}|\nabla\partial_t\bm{u}_n|_{[\mathscr{H}]^{M\times N}}
    \\
    &\quad+\int^t_0\int_\Omega\alpha(\overline{\bm{u}}_n(\sigma))\gamma_{\varepsilon_n}(\nabla\overline{\bm{u}}_n(\sigma){B}_0(\overline{\bm{u}}_n(\sigma)))\,dxd\sigma
    \Big)
    \\
    &\leq \limsup_{n\rightarrow\infty}\Big(\nu\int_0^t\int_\Omega\Upsilon_p(\nabla\overline{\bm{u}}_n(\sigma))\,dxd\sigma+\mu\int_0^t(\nabla\partial_t\bm{u}_n(\sigma),\nabla\overline{\bm{u}}_n(\sigma))_{[H]^{M\times N}}\,d\sigma
    \\
    &\quad+\int^t_0\int_\Omega\alpha(\overline{\bm{u}}_n(\sigma))\gamma_{\varepsilon_n}(\nabla\overline{\bm{u}}_n(\sigma){B}_0(\overline{\bm{u}}_n(\sigma)))\,dxd\sigma
    \Big)
    \\
    &\leq \lim_{n\rightarrow\infty}\nu\int_0^t\int_\Omega\Upsilon_p(\nabla{\bm{u}}_\nu(\sigma))\,dxd\sigma+\lim_{n\rightarrow\infty}\mu\int_0^t(\nabla\partial_t\bm{u}_n(\sigma),\nabla{\bm{u}}_\nu(\sigma))_{[H]^{M\times N}}\,d\sigma 
    \\
    &\quad+\lim_{n\rightarrow\infty}\int^t_0\int_\Omega\alpha(\overline{\bm{u}}_n(\sigma))\gamma_{\varepsilon_n}(\nabla{\bm{u}}_\nu(\sigma){B}_0(\overline{\bm{u}}_n(\sigma)))\,dxd\sigma
    \\
    &\quad-\lim_{n\rightarrow\infty}\int_0^t(A(\underline{\bm{u}}_n(\sigma))\partial_t\bm{u}_n(\sigma)+\nabla_{\bm{u}} G(x,\overline{\bm{u}}_n(\sigma)),(\overline{\bm{u}}_n-\bm{u}_\nu)(\sigma))_{[H]^M}\,d\sigma
    \\
    &\quad-\lim_{n\rightarrow\infty}\int^t_0([\nabla\alpha](\underline{\bm{u}}_n(\sigma))\gamma_{\varepsilon_n}(\nabla\underline{\bm{u}}_n(\sigma)B_0(\underline{\bm{u}}_n(\sigma))),(\overline{\bm{u}}_n-\bm{u}_\nu)(\sigma))_{[H]^M}\,d\sigma
    \\
    &\quad-\lim_{n\rightarrow\infty}\int^t_0(\alpha(\underline{\bm{u}}_n(\sigma))\nabla\gamma_{\varepsilon_n}(\nabla\underline{\bm{u}}_n(\sigma){B}_0(\underline{\bm{u}}_n(\sigma))):
    \\
    &\qquad\qquad:\nabla\underline{\bm{u}}_n(\sigma)[\nabla{B}_0](\underline{\bm{u}}_n(\sigma)),(\overline{\bm{u}}_n-\bm{u}_\nu)(\sigma))_{[H]^M}d\sigma
    \\
    &=\nu\int_0^t\int_\Omega\Upsilon_p(\nabla{\bm{u}}_\nu(\sigma))\,dxd\sigma+\mu\int_0^t(\nabla\partial_t\bm{u}_\nu(\sigma),\nabla{\bm{u}_\nu}(\sigma))_{[H]^{M\times N}}\,d\sigma
    \\
    &\quad+\int^t_0\int_\Omega\alpha({\bm{u}}_\nu(\sigma))\gamma(\nabla{\bm{u}_\nu}(\sigma){B}_0({\bm{u}_\nu}(\sigma)))\,dxd\sigma
    \\
    &=\nu\int_0^t\int_\Omega\Upsilon_p(\nabla{\bm{u}}_\nu(\sigma))\,dxd\sigma+\frac{\mu}{2}\big(|\nabla{\bm{u}_\nu}(t)|^2_{[H]^{M\times N}}-|\nabla\bm{u}_0|^2_{[H]^{M\times N}}\big)
    \\
    &\quad+\int^t_0\int_\Omega\alpha({\bm{u}_\nu}(\sigma))\gamma(\nabla{\bm{u}_\nu}(\sigma){B}_0({\bm{u}_\nu}(\sigma)))\,dxd\sigma
  \end{align}
  Combining (Fact 1) in Section \ref{secpre} with \eqref{conv_9}--\eqref{conv_11} and \eqref{conv_1031}, we deduce the following convergence properties as $n\rightarrow\infty$:
    \begin{align}
    &\nu\int_0^t\int_\Omega\Upsilon_p(\nabla\overline{\bm{u}}_n(\sigma))\,dxd\sigma\rightarrow\nu\int_0^t\int_\Omega\Upsilon_p(\nabla{\bm{u}}_\nu(\sigma))\,dxd\sigma,\label{conv_104}
    \\
    &\int_0^t(\nabla\partial_t\bm{u}_n(\sigma),\nabla\overline{\bm{u}}_n(\sigma))_{[H]^{M\times N}}\,d\sigma\rightarrow\int_0^t(\nabla\partial_t\bm{u}(\sigma),\nabla{\bm{u}}_\nu(\sigma))_{[H]^{M\times N}}\,d\sigma,\label{conv_105}
    \\
    &|\nabla\bm{u}_n(t)|^2_{[H]^{M\times N}}\rightarrow|\nabla\bm{u}_\nu(t)|^2_{[H]^{M\times N}}, \mbox{ for any }t\in[0,T].\label{conv_106}
  \end{align}\noeqref{conv_105}
    \eqref{conv_16}, \eqref{conv_17}, \eqref{conv_100} and \eqref{conv_104} imply 
  \begin{gather}\label{convJukai}
     \begin{cases}
         E_{\nu,\varepsilon_n}(\overline{\bm{u}}_n(t)) \to \mathcal{\widetilde{J}}_*(t) = E_\nu(\bm{u}(t)),
         \\
         E_{\nu,\varepsilon_n}(\underline{\bm{u}}_n(t)) \to \mathcal{\widetilde{J}}_*(t) = E_\nu(\bm{u}(t)), 
     \end{cases}
     \mbox{a.e. $ t \in (0, T) $, as $ n \to \infty $.}
 \end{gather}
 Now, we take $\bm{\omega}=\varphi$ in $[W^{1,p}(\Omega)]^M$ in \eqref{conv_103}, and pass to the limit $n\rightarrow\infty$. In light of \eqref{conv_100} and \eqref{conv_104}--\eqref{conv_106}, by the Lebesgue dominated convergence theorem, we observe
     \begin{gather}\label{conv_107}
    \int_I(A(\bm{u}_\nu(t))\partial_t\bm{u}_\nu(t),\bm{u}_\nu(t)-\bm{\varphi})_{[H]^M}\,dt+\int_I\mu(\nabla\partial_t\bm{u}_\nu(t),\nabla(\bm{u}_\nu(t)-\bm{\varphi}))_{[H]^{M\times N}}\,dt
    \\[-0.75ex]
    +\int_I(\nabla_{\bm{u}} G(x,\bm{u}_\nu(t))+[\nabla\alpha](\bm{u}_\nu(t))\gamma({B}(\bm{u}_\nu(t))\nabla\bm{u}_\nu(t)),\bm{u}_\nu(t)-\bm{\varphi})_{[H]^M}\,dt
    \\[-0.75ex]
    +\int_I(\alpha(\bm{u}_\nu(t))\bm{w}^*(t):[\nabla{B}](\bm{u}_\nu(t))\nabla\bm{u}_\nu(t),\bm{u}_\nu(t)-\bm{\varphi})_{[H]^M}\,dt
    \\[-0.75ex]
    +\nu\int_\Omega \nabla \Upsilon_p(\nabla \bm{u}_\nu(t)):\nabla (\bm{u}_\nu(t)-\bm{\varphi})\,dxdt
    \\[-0.75ex]
    +\int_I\int_{\Omega}\alpha(\bm{u}_\nu(t))\gamma(B(\bm{u}_\nu(t))\nabla \bm{u}_\nu(t))\,dxdt\leq\int_I\int_{\Omega}\alpha(\bm{u}_\nu(t))\gamma (B(\bm{u}_\nu(t))\nabla\bm{\varphi})\,dxdt,
    \end{gather}
    for any open interval $I\subset(0,T)$. The rest of the discussion proceeds in the same way as in the subsection \ref{subnu}.

Thus, we complete the proof of Main Theorem \ref{mainThm1}. 
\end{proof}

\section{Proof of Main Theorem 2.}\label{sec:proof2}
Let $\nu>0$ be a fixed positive constant. Let $ \bm{u}_k \in W^{1,2}(0,T;[V]^M)\cap L^\infty(0,T;[W^{1,p}(\Omega)]^M)$, for $k = 1 , 2 $, denote the solution of the system (S)$_\nu$ with the initial condition $  \bm{u}_1 ( 0 ) = \bm{u}_2 ( 0 ) = \bm{u}_0 \in [W^{1,p}(\Omega)]^M$. By putting $ \bm{\varphi} = \bm{u}_2 $ in the variational inequality for $ \bm{u}_1 $, and $ \bm{\varphi} = \bm{u}_1 $ in the one for $ \bm{u}_2 $, and then summing the two inequalities, the following estimate is obtained. Furthermore, under the additional assumptions $A\in C^1(\R^M;\R^{M\times M})$ and $\gamma\in C^{1,1}(\R^{M\times N})\cap C^2(\R^{M\times N})$, it follows that
\begin{align}
  &(A(\bm{u}_1(t))\partial_t\bm{u}_1(t)-A(\bm{u}_2(t))\partial_t\bm{u}_2(t),(\bm{u}_1-\bm{u}_2)(t))_{[H]^M}
  \\
  &+\frac{1}{2}\frac{d}{dt}\left(\mu|\nabla(\bm{u}_1-\bm{u}_2)(t)|^2_{[H]^{M\times N}}\right)
  \\
  &+(\nabla_{\bm{u}} G(x,\bm{u}_1(t))-\nabla_{\bm{u}} G(x,\bm{u}_2(t)),(\bm{u}_1-\bm{u}_2)(t))_{[H]^M}
  \\
  &+\nu\int_\Omega \bigl(\nabla \Upsilon_p(\nabla \bm{u}_1(t))-\nabla\Upsilon_p(\nabla\bm{u}_2(t))\bigr):\nabla(\bm{u}_1-\bm{u}_2)(t)\,dx
  \\
  &-\int_\Omega \big[\alpha(\bm{u}_1(t))\nabla\gamma(\nabla\bm{u}_1(t)B_0(\bm{u}_1(t))){}^\top B_0(\bm{u}_1(t))
  \\
  &\qquad-\alpha(\bm{u}_2(t))\nabla\gamma(\nabla\bm{u}_2(t)B_0(\bm{u}_2(t))){}^\top B_0(\bm{u}_2(t))\big]:\nabla(\bm{u}_1-\bm{u}_2)(t)\,dx
  \\
  &+\int_\Omega \big[[\nabla\alpha](\bm{u}_1(t))\gamma(\nabla\bm{u}_1(t)B_0(\bm{u}_1(t)))
  \\
  &\qquad-[\nabla\alpha](\bm{u}_2(t))\gamma(\nabla\bm{u}_2(t)B_0(\bm{u}_2(t)))\big]\cdot(\bm{u}_1-\bm{u}_2)(t)\,dx
  \\
  &+\int_\Omega \big[\alpha(\bm{u}_1(t))\nabla\gamma(\nabla\bm{u}_1(t)B_0(\bm{u}_1(t))):\nabla\bm{u}_1(t)[\nabla B_0](\bm{u}_1(t))
  \\
  &\qquad-\alpha(\bm{u}_2(t))\nabla\gamma(\nabla\bm{u}_2(t)B_0(\bm{u}_2(t))):\nabla\bm{u}_2(t)[\nabla B_0](\bm{u}_2(t))\big]\cdot (\bm{u}_1-\bm{u}_2)(t)\,dx
  \\
  &\leq 0, \mbox{ a.e. }t\in(0,T).\label{uni001}
\end{align}
Also, by using assumption (A2) and (A5), we see that 
\begin{align}
  &(A(\bm{u}_1(t))\partial_t\bm{u}_1(t)-A(\bm{u}_2(t))\partial_t\bm{u}_2(t),(\bm{u}_1-\bm{u}_2)(t))_{[H]^M}
  \\
  &\qquad+\frac{1}{2}\frac{d}{dt}\left(\mu|\nabla(\bm{u}_1-\bm{u}_2)(t)|^2_{[H]^{M\times N}}\right)
  \\
  &\leq L_G|(\bm{u}_1-\bm{u}_2)(t)|^2_{[H]^M}
  \\
  &+\int_\Omega \big|\alpha(\bm{u}_1(t))\nabla\gamma(\nabla\bm{u}_1(t)B_0(\bm{u}_1(t))){}^\top B_0(\bm{u}_1(t))
  \\
  &\qquad-\alpha(\bm{u}_2(t))\nabla\gamma(\nabla\bm{u}_2(t)B_0(\bm{u}_2(t))){}^\top B_0(\bm{u}_2(t))\big||\nabla(\bm{u}_1-\bm{u}_2)(t)|\,dx
  \\
  &+\int_\Omega \big|[\nabla\alpha](\bm{u}_1(t))\gamma(\nabla\bm{u}_1(t)B_0(\bm{u}_1(t)))
  \\
  &\qquad-[\nabla\alpha](\bm{u}_2(t))\gamma(\nabla\bm{u}_2(t)B_0(\bm{u}_2(t)))\big||(\bm{u}_1-\bm{u}_2)(t)|\,dx
  \\
  &+\int_\Omega \big|\alpha(\bm{u}_1(t))\nabla\gamma(\nabla\bm{u}_1(t)B_0(\bm{u}_1(t))):\nabla\bm{u}_1(t)[\nabla B_0](\bm{u}_1(t))
  \\
  &\qquad-\alpha(\bm{u}_2(t))\nabla\gamma(\nabla\bm{u}_2(t)B_0(\bm{u}_2(t))):\nabla\bm{u}_2(t)[\nabla B_0](\bm{u}_2(t))\big||(\bm{u}_1-\bm{u}_2)(t)|\,dx
  \\
  &=:L_G|(\bm{u}_1-\bm{u}_2)(t)|^2_{[H]^M}+K_1+K_2+K_3.
  \label{uni002}
\end{align}
Now, we estimate $K_1$, $K_2$ and $K_3$. By the assumptions (A1) and (A3), we have
\begin{align}
  K_1&\leq \|\nabla B_0\|_{L^\infty}\|\alpha\|_{L^\infty}\|\nabla\gamma\|_{L^\infty} \int_\Omega |(\bm{u}_1-\bm{u}_2)(t)||\nabla(\bm{u}_1-\bm{u}_2)(t)|\,dx
  \\
  &\quad + \|B_0\|_{L^\infty}\|\nabla\alpha\|_{L^\infty}\|\nabla\gamma\|_{L^\infty} \int_\Omega |(\bm{u}_1-\bm{u}_2)(t)||\nabla(\bm{u}_1-\bm{u}_2)(t)|\,dx
  \\
  &\quad+\|B_0\|_{L^\infty}\|\alpha\|_{L^\infty}\|\nabla^2\gamma\|_{L^\infty}\|\nabla B_0\|_{L^\infty}\cdot 
  \\
  &\qquad\cdot \int_\Omega |(\bm{u}_1-\bm{u}_2)(t)||\nabla(\bm{u}_1-\bm{u}_2)(t)||\nabla\bm{u}_1(t)|\,dx
  \\
  &\quad + \|B_0\|_{L^\infty}^2\|\alpha\|_{L^\infty}\|\nabla^2\gamma\|_{L^\infty} |\nabla(\bm{u}_1-\bm{u}_2)(t)|^2_{[H]^{M\times N}}
  \\
  &\leq 3(\|B_0\|_{W^{2,\infty}}+1)^2\|\alpha\|_{W^{2,\infty}}\|\nabla\gamma\|_{W^{1,\infty}}|(\bm{u}_1-\bm{u}_2)(t)|_{[V]^M}^2
  \\
  &\quad+\|B_0\|_{L^\infty}\|\alpha\|_{L^\infty}\|\nabla^2\gamma\|_{L^\infty}\|\nabla B_0\|_{L^\infty}\cdot 
  \\
  &\qquad\cdot \int_\Omega |(\bm{u}_1-\bm{u}_2)(t)||\nabla(\bm{u}_1-\bm{u}_2)(t)||\nabla\bm{u}_1(t)|\,dx
  \\
  &=:3(\|B_0\|_{W^{2,\infty}}+1)^2\|\alpha\|_{W^{2,\infty}}\|\nabla\gamma\|_{W^{1,\infty}}|(\bm{u}_1-\bm{u}_2)(t)|_{[V]^M}^2+K_4,\label{K1}
  \\
  K_2&\leq \|\nabla^2\alpha\|_{L^\infty}C_\gamma\int_\Omega (\|B_0\|_{L^\infty}|\nabla\bm{u}_1(t)|+1)|(\bm{u}_1-\bm{u}_2)(t)|^2\,dx
  \\
  &\quad +\|\nabla\alpha\|_{L^\infty}\|\nabla\gamma\|_{L^\infty}\|\nabla B_0\|_{L^\infty}\int_\Omega |(\bm{u}_1-\bm{u}_2)(t)|^2|\nabla\bm{u}_1(t)|\,dx
  \\
  &\quad + \|\nabla\alpha\|_{L^\infty}\|\nabla\gamma\|_{L^\infty}\|B_0\|_{L^\infty} \int_\Omega |\nabla(\bm{u}_1-\bm{u}_2)(t)||(\bm{u}_1-\bm{u}_2)(t)|\,dx
  \\
  &\leq\|B_0\|_{W^{2,\infty}}\|\alpha\|_{W^{2,\infty}}(C_\gamma+\|\nabla\gamma\|_{W^{1,\infty}})\int_\Omega |(\bm{u}_1-\bm{u}_2)(t)|^2|\nabla\bm{u}_1(t)|\,dx
  \\
  &\quad +\|\alpha\|_{W^{2,\infty}}(\|B_0\|_{W^{2,\infty}}\|\nabla\gamma\|_{W^{1,\infty}}+1)|(\bm{u}_1-\bm{u}_2)(t)|_{[V]^M}^2
  \\
  &=:K_5+\|\alpha\|_{W^{2,\infty}}(\|B_0\|_{W^{2,\infty}}\|\nabla\gamma\|_{W^{1,\infty}}+1)|(\bm{u}_1-\bm{u}_2)(t)|_{[V]^M}^2,\label{K2}
  \\
  K_3 &\leq \|\nabla\alpha\|_{L^\infty}\|\nabla\gamma\|_{L^\infty}\|\nabla B_0\|_{L^\infty}\int_\Omega |(\bm{u}_1-\bm{u}_2)(t)|^2|\nabla \bm{u}_1(t)|\,dx
  \\
  &\quad + \|\alpha\|_{L^\infty}\|\nabla^2\gamma\|_{L^\infty}\|\nabla B_0\|_{L^\infty}^2 \int_\Omega |(\bm{u}_1-\bm{u}_2)(t)|^2|\nabla \bm{u}_1(t)|^2\,dx
  \\
  &\quad + \|\alpha\|_{L^\infty}\|\nabla^2\gamma\|_{L^\infty}\|B_0\|_{L^\infty}\|\nabla B_0\|_{L^\infty}\cdot 
  \\
  &\qquad \cdot\int_\Omega |\nabla(\bm{u}_1-\bm{u}_2)(t)||\nabla \bm{u}_1(t)||(\bm{u}_1-\bm{u}_2)(t)|\,dx
  \\
  &\quad +\|\alpha\|_{L^\infty}\|\nabla\gamma\|_{L^\infty}\|\nabla^2 B_0\|_{L^\infty} \int_\Omega |(\bm{u}_1-\bm{u}_2)(t)|^2|\nabla \bm{u}_1(t)|\,dx
  \\
  &\quad +\|\alpha\|_{L^\infty}\|\nabla\gamma\|_{L^\infty}\|\nabla B_0\|_{L^\infty}\int_\Omega |\nabla(\bm{u}_1-\bm{u}_2)(t)||(\bm{u}_1-\bm{u}_2)(t)|\,dx
  \\
  &\leq \|B_0\|_{W^{2,\infty}}\|\alpha\|_{W^{2,\infty}}\|\nabla\gamma\|_{W^{1,\infty}}|(\bm{u}_1-\bm{u}_2)(t)|_{[V]^M}^2
  \\
  &\quad +2\|B_0\|_{W^{2,\infty}}\|\alpha\|_{W^{2,\infty}}\|\nabla\gamma\|_{W^{1,\infty}}\int_\Omega |(\bm{u}_1-\bm{u}_2)(t)|^2|\nabla\bm{u}_1(t)|\,dx
  \\
  &\quad +\|B_0\|_{W^{2,\infty}}^2\|\alpha\|_{W^{2,\infty}}\|\nabla\gamma\|_{W^{1,\infty}}\int_\Omega |(\bm{u}_1-\bm{u}_2)(t)|^2|\nabla \bm{u}_1(t)|^2\,dx
  \\
  &\quad +\|B_0\|_{W^{2,\infty}}^2\|\alpha\|_{W^{2,\infty}}\|\nabla\gamma\|_{W^{1,\infty}}\int_\Omega |\nabla(\bm{u}_1-\bm{u}_2)(t)||\nabla \bm{u}_1(t)||(\bm{u}_1-\bm{u}_2)(t)|\,dx
  \\
  &=:\|B_0\|_{W^{2,\infty}}\|\alpha\|_{W^{2,\infty}}\|\nabla\gamma\|_{W^{1,\infty}}|(\bm{u}_1-\bm{u}_2)(t)|_{[V]^M}^2+K_6+K_7+K_8.\label{K3}
\end{align}
Here, we consider the estimates of $K_i$, $i=4,5,6,7,8$ in the following three cases:\vspace{1.0ex}

\textbf{(Case 1)} $N=1,2$,~~~\textbf{(Case 2)} $3\leq N\leq6$.
\vspace{1.0ex}

\noindent
We first consider the case $N=1,2$. By Young's inequality, we obtain
\begin{align*}
  K_4&\leq \|B_0\|_{W^{2,\infty}}^2\|\alpha\|_{W^{2,\infty}}\|\nabla\gamma\|_{W^{1,\infty}}|(\bm{u}_1-\bm{u}_2)(t)|_{[L^{\frac{2p}{p-2}}(\Omega)]^M}|\nabla\bm{u}_1(t)|_{[L^p(\Omega)]^{M\times N}}\cdot 
  \\
  &\quad \cdot|\nabla(\bm{u}_1-\bm{u}_2)(t)|_{[H]^{M\times N}}
  \\
  &\leq\frac{C_{H^1}^{L^{\frac{2p}{p-2}}}\|B_0\|_{W^{2,\infty}}^2\|\alpha\|_{W^{2,\infty}}\|\nabla\gamma\|_{W^{1,\infty}}}{C_A \land \mu}|\nabla\bm{u}_1(t)|_{[L^p(\Omega)]^{M\times N}}\cdot 
  \\
  &\quad \cdot\bigl(|\sqrt{A(\bm{u}_1(t))}(\bm{u}_1-\bm{u}_2)(t)|_{[H]^M}^2+\mu|\nabla(\bm{u}_1-\bm{u}_2)(t)|_{[H]^{M\times N}}^2\bigr),
  \\
  K_5&\leq \|B_0\|_{W^{2,\infty}}\|\alpha\|_{W^{2,\infty}}(C_\gamma+\|\nabla\gamma\|_{W^{1,\infty}})|(\bm{u}_1-\bm{u}_2)(t)|^2_{[L^\frac{2p}{p-1}(\Omega)]^M}\cdot 
  \\
  &\qquad \cdot |\nabla\bm{u}_1(t)|_{[L^p(\Omega)]^{M\times N}}
  \\
  &\leq \frac{(C_{H^1}^{L^\frac{2p}{p-1}})^2\|B_0\|_{W^{2,\infty}}\|\alpha\|_{W^{2,\infty}}(C_\gamma+\|\nabla\gamma\|_{W^{1,\infty}})}{C_A \land \mu}|\nabla\bm{u}_1(t)|_{[L^p(\Omega)]^{M\times N}}\cdot
  \\
  &\quad\cdot \bigl(|\sqrt{A(\bm{u}_1(t))}(\bm{u}_1-\bm{u}_2)(t)|_{[H]^M}^2+\mu|\nabla(\bm{u}_1-\bm{u}_2)(t)|_{[H]^{M\times N}}^2\bigr)
  \\
  K_6&\leq 2\|B_0\|_{W^{2,\infty}}\|\alpha\|_{W^{2,\infty}}\|\nabla\gamma\|_{W^{1,\infty}}|(\bm{u}_1-\bm{u}_2)(t)|^2_{[L^\frac{2p}{p-1}(\Omega)]^M}|\nabla\bm{u}_1(t)|_{[L^p(\Omega)]^{M\times N}}
  \\
  &\leq \frac{2(C_{H^1}^{L^\frac{2p}{p-1}})^2\|B_0\|_{W^{2,\infty}}\|\alpha\|_{W^{2,\infty}}\|\nabla\gamma\|_{W^{1,\infty}}}{C_A \land \mu}|\nabla\bm{u}_1(t)|_{[L^p(\Omega)]^{M\times N}}\cdot
  \\
  &\quad\cdot \bigl(|\sqrt{A(\bm{u}_1(t))}(\bm{u}_1-\bm{u}_2)(t)|_{[H]^M}^2+\mu|\nabla(\bm{u}_1-\bm{u}_2)(t)|_{[H]^{M\times N}}^2\bigr)
  \\
  K_7&\leq \|B_0\|_{W^{2,\infty}}^2\|\alpha\|_{W^{2,\infty}}\|\nabla\gamma\|_{W^{1,\infty}}|(\bm{u}_1-\bm{u}_2)(t)|^2_{[L^\frac{2p}{p-2}(\Omega)]^M}|\nabla \bm{u}_1(t)|^2_{[L^p(\Omega)]^{M\times N}}
  \\
  &\leq \frac{2(C_{H^1}^{L^\frac{2p}{p-2}})^2\|B_0\|_{W^{2,\infty}}^2\|\alpha\|_{W^{2,\infty}}\|\nabla\gamma\|_{W^{1,\infty}}}{C_A \land \mu}|\nabla\bm{u}_1(t)|_{[L^p(\Omega)]^{M\times N}}^2\cdot
  \\
  &\quad\cdot \bigl(|\sqrt{A(\bm{u}_1(t))}(\bm{u}_1-\bm{u}_2)(t)|_{[H]^M}^2+\mu|\nabla(\bm{u}_1-\bm{u}_2)(t)|_{[H]^{M\times N}}^2\bigr)
  \\
  K_8&\leq \|B_0\|_{W^{2,\infty}}^2\|\alpha\|_{W^{2,\infty}}\|\nabla\gamma\|_{W^{1,\infty}}|(\bm{u}_1-\bm{u}_2)(t)|_{[L^{\frac{2p}{p-2}}(\Omega)]^M}\cdot 
  \\
  &\quad \cdot|\nabla\bm{u}_1(t)|_{[L^p(\Omega)]^{M\times N}}|\nabla(\bm{u}_1-\bm{u}_2)(t)|_{[H]^{M\times N}}
  \\
  &\leq\frac{C_{H^1}^{L^{\frac{2p}{p-2}}}\|B_0\|_{W^{2,\infty}}^2\|\alpha\|_{W^{2,\infty}}\|\nabla\gamma\|_{W^{1,\infty}}}{C_A \land \mu}|\nabla\bm{u}_1(t)|_{[L^p(\Omega)]^{M\times N}}\cdot 
  \\
  &\quad \cdot\bigl(|\sqrt{A(\bm{u}_1(t))}(\bm{u}_1-\bm{u}_2)(t)|_{[H]^M}^2+\mu|\nabla(\bm{u}_1-\bm{u}_2)(t)|_{[H]^{M\times N}}^2\bigr),
\end{align*}
where the Sobolev embeddings $H^1(\Omega)\subset L^{\frac{2p}{p-2}}(\Omega)$ and $H^1(\Omega)\subset L^{\frac{2p}{p-1}}(\Omega)$ have been applied, with the corresponding embedding constants denoted by $C_{H^1}^{L^{\frac{2p}{p-2}}}$ and $C_{H^1}^{L^{\frac{2p}{p-1}}}$.

Next, we consider the case $3\leq N\leq 6$. By applying Young's inequality, the integral terms $K_i$, $i=4,5,6,7$ can be estimated as follows:
\begin{align*}
  K_4&\leq \|B_0\|_{W^{2,\infty}}^2\|\alpha\|_{W^{2,\infty}}\|\nabla\gamma\|_{W^{1,\infty}}|(\bm{u}_1-\bm{u}_2)(t)|_{[L^{\frac{2N}{N-2}}(\Omega)]^M}\cdot 
  \\
  &\quad \cdot|\nabla(\bm{u}_1-\bm{u}_2)(t)|_{[H]^{M\times N}}|\nabla\bm{u}_1(t)|_{[L^p(\Omega)]^{M\times N}}|\Omega|^{\frac{p-N}{pN}}
  \\
  &\leq\frac{C_{H^1}^{L^{\frac{2N}{N-2}}}|\Omega|^{\frac{p-N}{pN}}\|B_0\|_{W^{2,\infty}}^2\|\alpha\|_{W^{2,\infty}}\|\nabla\gamma\|_{W^{1,\infty}}}{C_A \land \mu}|\nabla\bm{u}_1(t)|_{[L^p(\Omega)]^{M\times N}}\cdot 
  \\
  &\quad \cdot\bigl(|\sqrt{A(\bm{u}_1(t))}(\bm{u}_1-\bm{u}_2)(t)|_{[H]^M}^2+\mu|\nabla(\bm{u}_1-\bm{u}_2)(t)|_{[H]^{M\times N}}^2\bigr),
    \\
  K_5&\leq \|B_0\|_{W^{2,\infty}}\|\alpha\|_{W^{2,\infty}}(C_\gamma+\|\nabla\gamma\|_{W^{1,\infty}})|\Omega|^\frac{2p-N}{pN}\cdot
  \\
  &\quad\cdot |(\bm{u}_1-\bm{u}_2)(t)|^2_{[L^{\frac{2N}{N-2}}(\Omega)]^M}|\nabla\bm{u}_1(t)|_{[L^p(\Omega)]^{M\times N}}
  \\
  &\leq \frac{(C_{H^1}^{L^{\frac{2N}{N-2}}})^2\|B_0\|_{W^{2,\infty}}\|\alpha\|_{W^{2,\infty}}(C_\gamma+\|\nabla\gamma\|_{W^{1,\infty}})|\Omega|^\frac{2p-N}{pN}}{C_A \land \mu}\cdot
  \\
  &\quad\cdot |\nabla\bm{u}_1(t)|_{[L^p(\Omega)]^{M\times N}}\bigl(|\sqrt{A(\bm{u}_1(t))}(\bm{u}_1-\bm{u}_2)(t)|_{[H]^M}^2+\mu|\nabla(\bm{u}_1-\bm{u}_2)(t)|_{[H]^{M\times N}}^2\bigr)
  \\
  K_6&\leq 2\|B_0\|_{W^{2,\infty}}\|\alpha\|_{W^{2,\infty}}\|\nabla\gamma\|_{W^{1,\infty}}|\Omega|^\frac{2p-N}{pN}|(\bm{u}_1-\bm{u}_2)(t)|^2_{[L^\frac{2N}{N-2}(\Omega)]^M}\cdot 
  \\
  &\qquad\cdot |\nabla\bm{u}_1(t)|_{[L^p(\Omega)]^{M\times N}}
  \\
  &\leq \frac{2(C_{H^1}^{L^\frac{2N}{N-2}})^2\|B_0\|_{W^{2,\infty}}\|\alpha\|_{W^{2,\infty}}\|\nabla\gamma\|_{W^{1,\infty}}|\Omega|^\frac{2p-N}{pN}}{C_A \land \mu}|\nabla\bm{u}_1(t)|_{[L^p(\Omega)]^{M\times N}}\cdot
  \\
  &\quad\cdot \bigl(|\sqrt{A(\bm{u}_1(t))}(\bm{u}_1-\bm{u}_2)(t)|_{[H]^M}^2+\mu|\nabla(\bm{u}_1-\bm{u}_2)(t)|_{[H]^{M\times N}}^2\bigr)
  \\
  K_7&\leq \|B_0\|_{W^{2,\infty}}^2\|\alpha\|_{W^{2,\infty}}\|\nabla\gamma\|_{W^{1,\infty}}|\Omega|^\frac{2(p-N)}{pN}|(\bm{u}_1-\bm{u}_2)(t)|^2_{[L^\frac{2N}{N-2}(\Omega)]^M}\cdot 
  \\
  &\qquad\cdot |\nabla \bm{u}_1(t)|^2_{[L^p(\Omega)]^{M\times N}}
  \\
  &\leq \frac{2(C_{H^1}^{L^\frac{2N}{N-2}})^2\|B_0\|_{W^{2,\infty}}^2\|\alpha\|_{W^{2,\infty}}\|\nabla\gamma\|_{W^{1,\infty}}|\Omega|^\frac{2(p-N)}{pN}}{C_A \land \mu}|\nabla\bm{u}_1(t)|_{[L^p(\Omega)]^{M\times N}}\cdot
  \\
  &\quad\cdot \bigl(|\sqrt{A(\bm{u}_1(t))}(\bm{u}_1-\bm{u}_2)(t)|_{[H]^M}^2+\mu|\nabla(\bm{u}_1-\bm{u}_2)(t)|_{[H]^{M\times N}}^2\bigr)
  \\
  K_8&\leq \|B_0\|_{W^{2,\infty}}^2\|\alpha\|_{W^{2,\infty}}\|\nabla\gamma\|_{W^{1,\infty}}|(\bm{u}_1-\bm{u}_2)(t)|_{[L^{\frac{2N}{N-2}}(\Omega)]^M}\cdot 
  \\
  &\quad \cdot|\nabla(\bm{u}_1-\bm{u}_2)(t)|_{[H]^{M\times N}}|\nabla\bm{u}_1(t)|_{[L^p(\Omega)]^{M\times N}}|\Omega|^{\frac{p-N}{pN}}
  \\
  &\leq\frac{C_{H^1}^{L^{\frac{2N}{N-2}}}|\Omega|^{\frac{p-N}{pN}}\|B_0\|_{W^{2,\infty}}^2\|\alpha\|_{W^{2,\infty}}\|\nabla\gamma\|_{W^{1,\infty}}}{C_A \land \mu}|\nabla\bm{u}_1(t)|_{[L^p(\Omega)]^{M\times N}}\cdot 
  \\
  &\quad \cdot\bigl(|\sqrt{A(\bm{u}_1(t))}(\bm{u}_1-\bm{u}_2)(t)|_{[H]^M}^2+\mu|\nabla(\bm{u}_1-\bm{u}_2)(t)|_{[H]^{M\times N}}^2\bigr),
\end{align*}
where the above inequalities are obtained by employing the Sobolev embedding $H^1(\Omega)\subset L^{\frac{2N}{N-2}}(\Omega)$, with the embedding constant denoted by $C_{H^1}^{L^{\frac{2N}{N-2}}}$.

Combining the estimates for $K_i$, $i=4,5,6,7,8$ in the above two cases, we obtain the following unified estimate.
\begin{align}\label{K4}
  K_4&\leq C_1(|\nabla\bm{u}_1(t)|_{[L^p(\Omega)]^{M\times N}}+1)^2\cdot 
  \\
  &\qquad\cdot \bigl(|\sqrt{A(\bm{u}_1(t))}(\bm{u}_1-\bm{u}_2)(t)|_{[H]^M}^2+\mu|\nabla(\bm{u}_1-\bm{u}_2)(t)|_{[H]^{M\times N}}^2\bigr),
\end{align}
where, 
\begin{align*}
  C_1:=\frac{(C_{H^1}^{L^{\frac{2p}{p-2}}}+C_{H^1}^{L^{\frac{2N}{N-2}}}|\Omega|^{\frac{p-N}{pN}})\|B_0\|_{W^{2,\infty}}^2\|\alpha\|_{W^{2,\infty}}\|\nabla\gamma\|_{W^{1,\infty}}}{C_A \land \mu},
\end{align*}
\begin{align}\label{K5}
  K_5&\leq C_2|\nabla\bm{u}_1(t)|_{[L^p(\Omega)]^{M\times N}}\cdot 
  \\
  &\qquad\cdot\bigl(|\sqrt{A(\bm{u}_1(t))}(\bm{u}_1-\bm{u}_2)(t)|_{[H]^M}^2+\mu|\nabla(\bm{u}_1-\bm{u}_2)(t)|_{[H]^{M\times N}}^2\bigr),
\end{align}
where, 
\begin{align*}
  C_2&:=\frac{((C_{H^1}^{L^\frac{2p}{p-1}})^2+(C_{H^1}^{L^{\frac{2N}{N-2}}})^2|\Omega|^\frac{2p-N}{pN})\|B_0\|_{W^{2,\infty}}\|\alpha\|_{W^{2,\infty}}(C_\gamma+\|\nabla\gamma\|_{W^{1,\infty}})}{C_A \land \mu},
\end{align*}
\begin{align}\label{K6}
  K_6&\leq C_3|\nabla\bm{u}_1(t)|_{[L^p(\Omega)]^{M\times N}}\cdot 
  \\
  &\qquad\cdot\bigl(|\sqrt{A(\bm{u}_1(t))}(\bm{u}_1-\bm{u}_2)(t)|_{[H]^M}^2+\mu|\nabla(\bm{u}_1-\bm{u}_2)(t)|_{[H]^{M\times N}}^2\bigr),
\end{align}
where, 
\begin{align*}
  C_3&:=\frac{2((C_{H^1}^{L^\frac{2p}{p-1}})^2+(C_{H^1}^{L^\frac{2N}{N-2}})^2|\Omega|^\frac{2p-N}{pN})\|B_0\|_{W^{2,\infty}}\|\alpha\|_{W^{2,\infty}}\|\nabla\gamma\|_{W^{1,\infty}}}{C_A \land \mu},
\end{align*}
\begin{align}\label{K7}
  K_7&\leq C_4|\nabla\bm{u}_1(t)|_{[L^p(\Omega)]^{M\times N}}^2\cdot 
  \\
  &\qquad\cdot \bigl(|\sqrt{A(\bm{u}_1(t))}(\bm{u}_1-\bm{u}_2)(t)|_{[H]^M}^2+\mu|\nabla(\bm{u}_1-\bm{u}_2)(t)|_{[H]^{M\times N}}^2\bigr),
\end{align}
where, 
\begin{align*}
  C_4&:=\frac{2((C_{H^1}^{L^\frac{2p}{p-2}})^2+2(C_{H^1}^{L^\frac{2N}{N-2}})^2|\Omega|^\frac{2(p-N)}{pN})\|B_0\|_{W^{2,\infty}}^2\|\alpha\|_{W^{2,\infty}}\|\nabla\gamma\|_{W^{1,\infty}}}{C_A \land \mu}
\end{align*}
\begin{align}\label{K8}
  K_8&\leq C_5|\nabla\bm{u}_1(t)|_{[L^p(\Omega)]^{M\times N}}\cdot 
  \\
  &\qquad \cdot \bigl(|\sqrt{A(\bm{u}_1(t))}(\bm{u}_1-\bm{u}_2)(t)|_{[H]^M}^2+\mu|\nabla(\bm{u}_1-\bm{u}_2)(t)|_{[H]^{M\times N}}^2\bigr),
\end{align}
where, 
\begin{align*}
  C_5:=\frac{(C_{H^1}^{L^{\frac{2p}{p-2}}}+C_{H^1}^{L^{\frac{2N}{N-2}}})\|B_0\|_{W^{2,\infty}}^2\|\alpha\|_{W^{2,\infty}}\|\nabla\gamma\|_{W^{1,\infty}}}{C_A \land \mu}.
\end{align*}\noeqref{K1,K2,K3,K4,K5,K6,K7,K8}
Therefore, setting
\begin{align*}
  C_6:=\frac{3(\|B_0\|_{W^{2,\infty}}\|\nabla\gamma\|_{W^{1,\infty}}+1)^2\|\alpha\|_{W^{2,\infty}}}{C_A\land\mu},
\end{align*}
and using \eqref{K4}--\eqref{K8}, we obtain the following estimate for $K_1+K_2+K_3$:
\begin{align}\label{uni004}
  K_1+K_2+K_3&\leq C_7(|\nabla\bm{u}_1(t)|_{[L^p(\Omega)]^{M\times N}}+1)^2 \cdot 
  \\
  &\qquad \cdot \bigl(|\sqrt{A(\bm{u}_1(t))}(\bm{u}_1-\bm{u}_2)(t)|_{[H]^M}^2+\mu|\nabla(\bm{u}_1-\bm{u}_2)(t)|_{[H]^{M\times N}}^2\bigr),
\end{align}
where $C_7:=C_1+C_2+C_3+C_4+C_5+C_6.$

We next estimate for the first term on the left-hand side of \eqref{uni002} is given as follows.
\begin{align}
  &(A(\bm{u}_1(t))\partial_t\bm{u}_1(t)-A(\bm{u}_2(t))\partial_t\bm{u}_2(t),(\bm{u}_1-\bm{u}_2)(t))_{[H]^M}
  \\
  &=\frac{1}{2}\frac{d}{dt}\bigl( |\sqrt{A(\bm{u}_1(t))}(\bm{u}_1-\bm{u}_2)(t)|^2_{[H]^M} \bigr)
  \\
  &\quad -\frac{1}{2}\int_\Omega\partial_t\bm{u}_1(t)\hspace{-0.5ex}~^\top(\bm{u}_1-\bm{u}_2)(t)[\nabla A](\bm{u}_1(t))(\bm{u}_1-\bm{u}_2)(t)\,dx 
  \\
  &\quad+\int_\Omega (A(\bm{u}_1(t))-A(\bm{u}_2(t)))\partial_t\bm{u}_2(t)(\bm{u}_1-\bm{u}_2)(t)\,dx
  \\
  &\geq \frac{1}{2}\frac{d}{dt}\bigl( |\sqrt{A(\bm{u}_1(t))}(\bm{u}_1-\bm{u}_2)(t)|^2_{[H]^M} \bigr)
  \\
  &\quad-\frac{1}{2}\|\nabla A\|_{L^\infty}\int_\Omega|\partial_t\bm{u}_1(t)||(\bm{u}_1-\bm{u}_2)(t)|^2\,dx 
  \\
  &\quad -\|\nabla A\|_{L^\infty}\int_\Omega |\partial_t\bm{u}_2(t)||(\bm{u}_1-\bm{u}_2)(t)|^2\,dx
  \\
  &=:\frac{1}{2}\frac{d}{dt}\bigl( |\sqrt{A(\bm{u}_1(t))}(\bm{u}_1-\bm{u}_2)(t)|^2_{[H]^M} \bigr)+J_1+J_2.\label{uni003}
\end{align}
Here, by using the continuous embedding from $H^1(\Omega)$ to $L^3(\Omega)$ under $N \leq 6$, the integral terms $J_1$ and $J_2$ in \eqref{uni003} can be estimated as follows.
\begin{align}
  J_1&\geq -\frac{1}{2}\|\nabla A\|_{L^\infty}|\partial_t\bm{u}_1(t)|_{[L^3(\Omega)]^M}|(\bm{u}_1-\bm{u}_2)(t)|_{[L^3(\Omega)]^M}^2
  \\
  &\geq-\frac{(C_{H^1}^{L^3})^3\|\nabla A\|_{L^\infty}}{2}|\partial_t\bm{u}_1(t)|_{[V]^M}|(\bm{u}_1-\bm{u}_2)(t)|_{[V]^M}^2
  \\
  &\geq -\frac{(C_{H^1}^{L^3})^3\|A\|_{W^{1,\infty}}}{2(C_A \land \mu)}|\partial_t\bm{u}_1(t)|_{[V]^M}\cdot 
  \\
  &\qquad \cdot \bigl(|\sqrt{A(\bm{u}_1(t))}(\bm{u}_1-\bm{u}_2)(t)|^2_{[H]^M}+\mu|\nabla(\bm{u}_1-\bm{u}_2)(t)|^2_{[H]^{M\times N}} \bigr),\label{J_1}
  \\
  J_2&\geq-\|\nabla A\|_{L^\infty}|\partial_t\bm{u}_2(t)|_{[L^3(\Omega)]^M}|(\bm{u}_1-\bm{u}_2)(t)|_{[L^3(\Omega)]^M}^2
  \\
  &\geq-{(C_{H^1}^{L^3})^3\|\nabla A\|_{L^\infty}}|\partial_t\bm{u}_2(t)|_{[V]^M}|(\bm{u}_1-\bm{u}_2)(t)|_{[V]^M}^2
  \\
  &\geq -\frac{(C_{H^1}^{L^3})^3\|A\|_{W^{1,\infty}}}{C_A \land \mu}|\partial_t\bm{u}_2(t)|_{[V]^M}\cdot 
  \\
  &\qquad \cdot \bigl(|\sqrt{A(\bm{u}_1(t))}(\bm{u}_1-\bm{u}_2)(t)|^2_{[H]^M}+\mu|\nabla(\bm{u}_1-\bm{u}_2)(t)|^2_{[H]^{M\times N}} \bigr).\label{J_2}
\end{align} 
Therefore, by virtue of \eqref{J_1} and \eqref{J_2}, \eqref{uni003} can be estimated as follows:
\begin{align}
  &(A(\bm{u}_1(t))\partial_t\bm{u}_1(t)-A(\bm{u}_2(t))\partial_t\bm{u}_2(t),(\bm{u}_1-\bm{u}_2)(t))_{[H]^M}
  \\
  &\geq \frac{1}{2}\frac{d}{dt}\bigl( |\sqrt{A(\bm{u}_1(t))}(\bm{u}_1-\bm{u}_2)(t)|^2_{[H]^M} \bigr)
  \\
  &-\frac{(C_{H^1}^{L^3})^3\|A\|_{W^{1,\infty}}}{C_A \land \mu}(|\partial_t\bm{u}_1(t)|_{[V]^M}+|\partial_t\bm{u}_2(t)|_{[V]^M})\cdot
  \\
  &\qquad \cdot\bigl(|\sqrt{A(\bm{u}_1(t))}(\bm{u}_1-\bm{u}_2)(t)|^2_{[H]^M}+\mu|\nabla(\bm{u}_1-\bm{u}_2)(t)|^2_{[H]^{M\times N}} \bigr).\label{uni005}
\end{align}

Combining \eqref{uni002}, \eqref{uni004}, \eqref{uni003} and \eqref{uni005}, we arrive at
\begin{align*}
  &\frac{d}{dt}J(t)\leq C_*((1+|\nabla\bm{u}_1(t)|_{[L^p(\Omega)]^{M\times N}})^2+|\partial_t\bm{u}_1(t)|_{[V]^M}+|\partial_t\bm{u}_2(t)|_{[V]^M})J(t),
\end{align*}
where
\begin{gather*}
  C_*:=\frac{2(C_{H^1}^{L^3})^3\|A\|_{W^{1,\infty}}+2L_G}{C_A \land \mu}+2C_7,
  \\
  J(t)=|\sqrt{A(\bm{u}_1(t))}(\bm{u}_1-\bm{u}_2)(t)|^2_{[H]^M}+\mu|\nabla(\bm{u}_1-\bm{u}_2)(t)|^2_{[H]^{M\times N}}, 
  \\
  \mbox{ for all }t\in[0,T].
\end{gather*}
By applying Gronwall's lemma to \eqref{uni005}, we conclude that 
\begin{gather*}
  J(t)\leq \exp (C_*(1+T)((1+|\bm{u}_1|_{[L^\infty(0,T;[W^{1,p}(\Omega)]^M)]})^2+|\partial_t\bm{u}_1|_{[\mathscr{V}]^M}+|\partial_t\bm{u}_2|_{[\mathscr{V}]^M}))J(0),
  \\
  \mbox{ for all }t\in[0,T].
\end{gather*}

In addition, since \eqref{uni005} ensures the uniqueness of the solution to (S)$_\nu$, it follows that the energy inequality \eqref{ene-inq1} holds for all $ 0 \leq s \leq t \leq T $. Once the unique solution $ \bm{u}_\nu \in [\mathscr{H}]^M $ to (S)$_\nu$ has been obtained and an arbitrary $ s \in [0, T) $ has been fixed, the uniqueness property enables us to verify \eqref{energy3} ``for a. e. $ \widetilde{s} \in (s, T) $, including the case $ \widetilde{s} = s $, and for any $ t \in [\widetilde{s}, T] $.'' This is established by applying the time-discretization method described in Section \ref{timedis003} to the case where the initial data of (S)$_\nu$ is given by $ \bm{u}_\nu(s) \in [W^{1, p}(\Omega)]^M$. $\square$

\section{Applications}

Throughout this section, we assume (A0) and use Notation \ref{notnsp}. In addition, we focus on the two-dimensional setting $M = N = 2$, and demonstrate several applications of our Main Theorems.

\subsection{Pseudo-parabolic image denoising process with anisotropy}

In this subsection, we assume that:
\begin{itemize}
    \item $ \lambda \in  C^{0, 1}(\R) \cap L^\infty(\R) $ is a fixed function, which has a convex primitive $ 0 \leq \widehat{\lambda} \in C^{1, 1}(\R)  $ satisfying $ \widehat{\lambda}(0) = 0 $,
    \item $u_\mathrm{org} \in L^\infty(\Omega)$ is a fixed function,
    \item $ 0 \leq \gamma_0 : \R^2 \longrightarrow \R $ is a fixed nonnegative, even, Lipschitz, and convex function,  i.e. $0 \leq \gamma_0 \in C^{0,1}(\R^2)$ and $\gamma_0(-\bm{w})=\gamma_0(\bm{w})$ for all $\bm{w}\in\R^2$,
    \item $R \in C^\infty(\R; \R^{2\times 2})$ is the matrix-valued function defined by
        \begin{gather}\label{rot}
            R : \vartheta \in \R \mapsto 
            R(\vartheta)
            :=
            \left[
                \begin{matrix}
                    \cos \vartheta & -\sin \vartheta \\
                    \sin \vartheta & \cos \vartheta
                \end{matrix}
            \right]
            \in SO(2) \subset \R^{2 \times 2}.
        \end{gather}
\end{itemize}

Under these assumptions, we consider the following pseudo-parabolic system:
\begin{align}
    &
    \label{alpha00001}
    \begin{cases}
        \partial_t \theta
        - \mathrm{div} \bigl(
            \nabla \theta
            + \mu \nabla \partial_t \theta
            + \nu |\nabla \theta|^{p-2} \nabla \theta
        \bigr)
        \\
        \qquad
        + \partial \gamma_0(R(\theta)\nabla u)
            \cdot R(\theta + {\ts \frac{\pi}{2}})\nabla u
        \ni 0
        \quad \mbox{in $Q$,}
        \\[1ex]
        \bigl(
            \nabla \theta
            + \nu |\nabla \theta|^{p-2} \nabla \theta
            + \mu \nabla \partial_t \theta
        \bigr)
        \cdot\bm{n}_\Gamma = 0 \quad \mbox{on $\Sigma$,}
        \\[1ex]
        \theta(0,x)=\theta_0(x),
        \quad x \in \Omega,
    \end{cases}
    \\
    &
    \label{u00001}
    \begin{cases}
        \partial_t u
        - \mathrm{div}\bigl(
            R(-\theta)\,
            \partial\gamma_0(R(\theta)\nabla u)
            + \mu \nabla \partial_t u
            + \nu |\nabla u|^{p-2}\nabla u
        \bigr)
        \\
        \hspace{4ex}
        + \lambda (u - u_\mathrm{org})
        \ni 0
        \quad \mbox{in $Q$,}
        \\[1ex]
        \bigl(
            R(-\theta)\,
            \partial\gamma_0(R(\theta)\nabla u)
            + \mu \nabla \partial_t u
            + \nu |\nabla u|^{p-2}\nabla u
        \bigr)\cdot\bm{n}_\Gamma = 0
        \quad \mbox{on $\Sigma$,}
        \\[1ex]
        u(0,x)=u_0(x),
        \quad x \in \Omega.
    \end{cases}
\end{align}
This system is motivated by the minimization process of the energy functional
\begin{gather}\label{fe61}
    E : 
    [u,\theta]
    \in D_\nu := \bigl\{ [v, w] \in [V]^2 \,\bigl|\, \nu [v, w] \in [W^{1, p}(\Omega)]^2 \bigr\}
    \nonumber
    \\
    \mapsto
    E(u,\theta)
    :=
    \frac{1}{2}\int_\Omega |\nabla\theta|^2\,dx
    + \int_\Omega \gamma_0(R(\theta)\nabla u)\,dx
    \\
    + \frac{1}{p}\int_\Omega \nu \bigl(|\nabla\theta|^p + |\nabla u|^p\bigr)\,dx
    + \int_\Omega \widehat{\lambda}(u - u_\mathrm{org})\,dx
    \in [0,\infty),
    \label{energyimage}
\end{gather}
which is based on the free-energy used in anisotropic monochrome image denoising problems,
see \cite{berkels2006cartoon,AMSU202411}.
\medskip

In this context, $\Omega \subset \R^2$ represents the spatial domain of the monochrome image, and the unknown $u$ denotes its gray-scale intensity distribution.  
The second unknown $\theta$ encodes locally preferred orientations of structural features, with anisotropy determined by a function~$\gamma_0$.

Based on these observations, we set:
\begin{itemize}
    \item $\bm{u} := {}^\top[u,\theta]$ in $ \R^2 $ to unify the variables,
    \item $A(\bm{u}) := \left[
        \begin{matrix}
            1 & 0 \\ 0 & 1
        \end{matrix}
        \right] \in \R^{2\times 2}$, for all $ \bm{u} = {^\top} [u, \theta] \in \R^2 $,
\item $ \alpha(\bm{u}) := 1 $, for all $ \bm{u} = {^\top}[u, \theta] \in \R^2 $, 
\item $B(\bm{u})W := W\,R(\theta)$ for all $\bm{u} = {^\top} [u, \theta] \in \R^2$ and $W\in\R^{2\times 2}$,
\item $\gamma(W) := \gamma_0(w_{11}, w_{12})$ for all $W=[w_{ij}]_{1 \leq i, j \leq 2} \in \R^{2 \times 2} $,  
    with the convex function $\gamma_0$, 
\item $ \ds G(x, \bm{u}) := \widehat{\lambda}(u -u_\mathrm{org}(x)) $, for a.e. $ x \in \Omega $, and all $ \bm{u} = {^\top}[u, \theta] \in \R^2 $.
\end{itemize}
Under the above setting, we can verify the assumptions (A1)--(A6) in Main Theorems \ref{mainThm1} and \ref{mainThm2}.  
Therefore, the system \eqref{alpha00001}--\eqref{u00001} can be rewritten in the form of (S)$_\nu$, and the well-posedness of \eqref{alpha00001}--\eqref{u00001} follows directly from the mathematical framework established in this paper.

\subsection{Pseudo-parabolic KWC type system with anisotropy}

In this subsection, we assume that:
\begin{itemize}
    \item $ 0 < \alpha_0 \in W^{1, \infty}(\R) \cap C^1(\R) $ and $ 0 \leq \alpha_1 \in W^{1, \infty}(\R) \cap C^2(\R) $ are fixed functions,
    \item $ g \in C^{0, 1}(\R) \cap L^\infty(\R) $ is a fixed function, having a nonnegative primitive $ \widehat{g}  $,
    \item $ \gamma_0 $ is  the nonnegative, even, Lipschitz, and convex function as in the previous subsection, 
        $ R $ is the $ SO(2) $-valued operator given in \eqref{rot}, and $ D_\nu \subset [V]^2 $ is the domain of energy introduced in \eqref{fe61}.
\end{itemize}
On this basis, we consider the following pseudo-parabolic system:
\begin{align}
    & 
    \label{eta00001}
    \begin{cases}
      \partial_t \eta -\mathrm{div} \bigl(
            \nabla \eta
            + \mu \nabla \partial_t \eta
            + \nu |\nabla \eta|^{p-2} \nabla \eta
        \bigr) +g(\eta)+\alpha_1'(\eta)\gamma_0(\nabla\theta)= 0 \mbox{ in $  Q $,}
      \\[1ex]
      \bigl(
            \nabla \eta
            + \mu \nabla \partial_t \eta
            + \nu |\nabla \eta|^{p-2} \nabla \eta
        \bigr)\cdot \bm{n}_\Gamma = 0 \mbox{ on }\Sigma,
      \\[1ex]
      \eta(0, x) = \eta_0(x) ,~ x\in \Omega,
    \end{cases}
    \\
    & 
    \label{theta00001}
    \begin{cases}
        \alpha_0(\eta)\partial_t \theta -\mathrm{div} \bigl( \alpha_1(\eta) R(-\theta) \partial \gamma_0(R(\theta) \nabla \theta)
        +\mu \nabla \partial_t \theta
        +\nu|\nabla \theta|^{p -2} \nabla \theta 
        \bigr) 
        \\
        \quad +\alpha_1(\eta) \partial \gamma_0(R(\theta) \nabla \theta) \cdot R(\theta +\frac{\pi}{2}) \nabla \theta \ni 0 \mbox{ in $  Q $,}
        \\[1ex]
        \bigl( \alpha_1(\eta) R(-\theta) \partial \gamma_0(R(\theta) \nabla \theta)
        +\mu \nabla \partial_t \theta
        +\nu|\nabla \theta|^{p -2} \nabla \theta 
        \bigr) \cdot \bm{n}_\Gamma \ni 0 \mbox{ on }\Sigma,
        \\[1ex]
        \theta(0, x) = \theta_0(x) ,~ x\in \Omega,
    \end{cases}
\end{align}
The system \{\eqref{eta00001}, \eqref{theta00001}\} is a new anisotropic version of a phase-field 
model of grain boundary motion, known as the ``KWC model'', originally proposed by Kobayashi et al. 
\cite{MR1752970,MR1794359}. Related anisotropic formulations have also been discussed in \cite{Oberwolfach2018}. In accordance with the modelling framework of \cite{MR1752970,MR1794359}, the present system is derived 
as a gradient flow of the following free energy:
\begin{gather}
    E: [\eta, \theta] \in D_\nu 
    \mapsto E(\eta, \theta) := \frac{1}{2} \int_\Omega |\nabla \eta|^2 \, dx  +\frac{1}{p} \int_\Omega \nu \bigl( |\nabla \eta|^p +|\nabla \theta|^p \bigr) \, dx  
    \nonumber
    \\
    +\int_\Omega \alpha_1(\eta) \gamma_0(R(\theta) \nabla \theta) \, dx
    +\int_\Omega \widehat{g}(\eta) \, dx \in [0, \infty).
    \label{energykwc}
\end{gather}
In the anisotropic KWC-type model \{\eqref{eta00001}, \eqref{theta00001}\}, the unknowns $ \eta $ and $ \theta $ represent, respectively, the orientation order parameter and the crystalline orientation angle, in a polycrystalline body. The convex function $\gamma_0$ describes the structural unit of grain, while the $SO(2)$-valued map $R$ dynamically adjusts the orientation of the structural unit in response to the time evolution of the state variable $\theta$. Moreover, the functions $ \alpha_0 $ and $ \alpha_1 $ are state-dependent mobility coefficients with respect to the variable $ \eta $, and these play the role of driving forces in the evolution of grain boundaries. 
\medskip

Now, let us set:
\begin{itemize}
    \item $\bm{u} := {}^\top[\eta,\theta]$ in $ \R^2 $ to unify the variables,
    \item $A(\bm{u}) := \left[
        \begin{matrix}
            1 & 0 \\ 0 & \alpha_0(\eta)
        \end{matrix}
        \right] \in \R^{2\times 2}$, for all $ \bm{u} = {^\top} [\eta, \theta] \in \R^2 $,
\item $ \alpha(\bm{u}) = \alpha_1(\eta) $, for all $ \bm{u} = {^\top} [\eta, \theta] \in \R^2 $,
    \item $B(\bm{u})W := W\,R(\theta)$ for all $\bm{u} = {^\top} [\eta, \theta] \in \R^2$ and $W\in\R^{2\times 2}$,
    \item $\gamma(W) := \gamma_0(w_{21}, w_{22})$ for all $W=[w_{ij}]_{1 \leq i, j \leq 2} \in \R^{2 \times 2} $,  
\item $ \ds G(x,\bm{u}) := \widehat{g}(\eta) $, 
    for all $ \bm{u} = {^\top}[\eta, \theta] \in \R^2 $.
\end{itemize}
Then, by verifying assumptions \textnormal{(A1)}--\textnormal{(A6)}, one can immediately deduce the well-posedness of the anisotropic model \{\eqref{eta00001}, \eqref{theta00001}\} as a straightforward consequence of Main Theorems~\ref{mainThm1} and~\ref{mainThm2}.

\begin{rem}[On the relation to existing isotropic KWC-type models]
The KWC model was originally formulated in two spatial dimensions, and the existing studies 
(cf.\ \cite{MR2469586,MR2548486,MR3038131,MR3082861,MR3203495,MR2836555,MR2668289,%
MR3888633,MR3155454,MR4395725}) have mainly focused on the setting in which 
$\gamma_0$ is the Euclidean norm and $\nu = 0$. 
Additionally, in these works, the convexity of the mobility coefficient $\alpha_1$ is also usually imposed as a structural assumption. 
In contrast, our anisotropic KWC-type system \{\eqref{eta00001},\eqref{theta00001}\} does not rely on the convexity of $\alpha_1$. 
Therefore, the abstract framework developed in this paper not only enables us to incorporate crystalline anisotropy into the KWC-type model, but also provides new mathematical insight even for the isotropic models. 
In this sense, our results extend the mathematical theory of KWC-type phase-field systems beyond the conventional settings that rely on the convexity of $\alpha_1$.
\end{rem}

\providecommand{\href}[2]{#2}
\providecommand{\arxiv}[1]{\href{http://arxiv.org/abs/#1}{arXiv:#1}}
\providecommand{\url}[1]{\texttt{#1}}
\providecommand{\urlprefix}{URL }

\end{document}